\documentclass[11pt]{article}

\usepackage[margin=1.27in]{geometry}

\usepackage[misc]{ifsym} 
\usepackage{lipsum}
\usepackage{mathrsfs}
\usepackage{amssymb}
\usepackage{amsmath}
\usepackage{latexsym}
\usepackage{bm}
\usepackage{enumitem}
\usepackage{graphicx}
\usepackage{amscd}
\usepackage{extarrows}
\usepackage{amsfonts}
\usepackage{amssymb}
\usepackage{indentfirst}
\usepackage{bbm}
\usepackage{mathdots}
\usepackage{mathtools}
\usepackage[linktocpage]{hyperref}
\usepackage{amsthm}
\usepackage{ upgreek }
\usepackage{empheq}
\hypersetup{
    colorlinks,
    citecolor=black,
    filecolor=black,
    linkcolor=blue,
    urlcolor=black
}

\usepackage{quiver}
\usepackage[multiple]{footmisc}
\usepackage[title]{appendix}

\usepackage{float}

\theoremstyle{plain}
\newtheorem{theorem}{Theorem}[section]
\newtheorem{proposition}[theorem]{Proposition}
\newtheorem{definition}[theorem]{Definition}
\newtheorem{lemma}[theorem]{Lemma}
\newtheorem{corollary}[theorem]{Corollary}
\newtheorem*{theorem*}{Theorem}
\newtheorem*{lemma*}{Lemma}

\theoremstyle{remark}
\newtheorem{remark}[theorem]{Remark}

\theoremstyle{definition}
\newtheorem{example}[theorem]{Example}

\newcommand*\dd{\mathop{}\!\mathrm{d}}

\DeclareMathOperator{\End}{End}

\DeclareMathOperator{\Hom}{Hom}

\DeclareMathOperator{\Aut}{Aut}

\DeclareMathOperator{\Ind}{Ind}

\DeclareMathOperator{\Rep}{Rep}

\DeclareMathOperator{\Res}{Res}
\DeclareMathOperator{\Speh}{Speh}
\DeclareMathOperator{\pr}{pr}
\DeclareMathOperator{\Nrd}{Nrd}

\DeclareMathOperator{\SL}{SL}
\DeclareMathOperator{\GL}{GL}

\DeclareMathOperator{\JL}{JL}

\DeclareMathOperator{\tp}{temp}

\DeclareMathOperator{\cusp}{cusp}
\DeclareMathOperator{\disc}{disc}

\DeclareMathOperator{\irr}{Irr}

\DeclareMathOperator{\Sp}{Sp}

\DeclareMathOperator{\rk}{rank}
\DeclareMathOperator{\supp}{supp}
\DeclareMathOperator{\MW}{MW}

\title{Automorphic $C^*$-Algebras of Reductive Groups}
\author{Jun Yang}
\date{}

\begin{document}
\maketitle
\begin{abstract}
Let $G$ be a reductive group over a number field $F$.  
We define $C^*_{\mathcal A}(G(\mathbb A))$, the {\it automorphic $C^*$-algebra  of $G$}, to be the $C^*$-algebraic image of the full automorphic representation of $G(\mathbb A)$ on $L^2(G(F)\backslash G(\mathbb A))$.
Using the Langlands spectral decomposition with respect to discrete Levi data, we construct an injective $*$-homomorphism
\begin{equation}\label{emain0}
C^*_{\mathcal A}(G(\mathbb A))
\longrightarrow
\bigoplus_{[M,\sigma]}
\mathcal K_{C_0(\widehat{A_M})}
\bigl(\Ind_P^G C_0(\widehat{A_M},H_{M,\sigma})\bigr)^{W(G,M,\sigma)},
\end{equation}
where $[M,\sigma]$ ranges over the associate classes of discrete Levi data.
When $G$ is $\GL(n)$ or an inner form of it, we prove that the map \eqref{emain0} is an isomorphism of $C^*$-algebras.
\end{abstract}

\tableofcontents

\section{Introduction}\label{sintro}

One of the classical themes in harmonic analysis on reductive groups is to
encode the Plancherel decomposition, and more generally the topology of unitary representations, in the structure of the group $C^*$-algebra.  
If $G$ is a locally compact group, then the spectrum of the maximal group $C^*$-algebra $C^*(G)$ is naturally identified with the unitary
dual $\irr(G)$. 
The spectrum of the reduced group $C^*$-algebra $C_r^*(G)$ is the tempered dual $\irr_{\tp}(G)$, which consists of the irreducible unitary representations weakly contained in the regular representation.  
For a reductive group over a local field, the latter is precisely the natural $C^*$-algebraic object for Plancherel theory.  

This point of view has a long history for real reductive groups.  
Starting from Harish-Chandra's Plancherel theory \cite{HCharm1} and Arthur's description of the Harish-Chandra Schwartz algebra \cite{Arthur1983}, Wassermann obtained a remarkably simple description of the reduced $C^*$-algebra of a real reductive group, up to Morita equivalence, and used it in his approach to the Connes--Kasparov conjecture \cite{Wasserman1987}.
A detailed $C^*$-algebraic treatment was later given by Clare, Crisp and Higson \cite{CCH2016}, building also on the Hilbert-module realization of parabolic induction developed by Clare in \cite{Clare2013}. 
Their structure theorem (see \cite[Theorem~6.8]{CCH2016}) may be written in the form
\begin{equation}\label{eIntroRealCstar}
 C_r^*(G)
 \cong
 \bigoplus_{[P,\sigma]}
 \mathcal K_{C_0(\widehat A)}
 \bigl(\Ind_P^G\mathcal H_\sigma\bigr)^{W_\sigma},
 \qquad
 \mathcal H_\sigma=C_0(\widehat A,H_\sigma),
\end{equation}
where $P=MAN$ ranges over appropriate parabolic subgroups, $\sigma$ ranges over square-integrable representations of $M$, and $W_\sigma$ is the corresponding Weyl stabilizer.  
The Weyl-group action in \eqref{eIntroRealCstar} is implemented by normalized Knapp--Stein intertwining operators.  
Thus the Plancherel decomposition is reflected by a decomposition of $C_r^*(G)$ into fixed-point algebras of compact operators on Hilbert $C^*$-modules. 
Clare, Higson, Song and Tang \cite{CHST2024} recently gave detailed descriptions of the $C^*$-algebraic Morita equivalence by Wassermann and computed the Connes-Kasparov morphism. 

There is a parallel, but geometrically rather different, theory for reductive groups over $p$-adic fields.  
For $p$-adic $\GL(n)$, Plymen started with the point of view of Harish-Chandra and proved that the reduced $C^*$-algebra is strongly Morita equivalent to a commutative $C^*$-algebra determined by unramified unitary characters of Levi subgroups \cite{Plymen1987}.
He subsequently established a $C^*$-Plancherel decomposition for general reductive $p$-adic groups \cite{Plymen1990}, giving the analogue of  \eqref{eIntroRealCstar} in the $p$-adic setting. 
For $\GL(n)$, this picture was further refined using the Bernstein decomposition and extended quotients in \cite{Plymen2002}. 
Aubert and Plymen later gave an explicit Plancherel decomposition as well as the Bernstein decompositions in \cite{AubtPlm05}.

An important difference between the real and $p$-adic group representations is the geometry of the unramified unitary parameters.  
In the real case, the relevant parameter space is a vector group, whereas for a $p$-adic Levi subgroup the group of unramified unitary characters is a compact torus.  
In particular, the contractibility argument available in Wassermann's real case is no longer available in general.  
Nevertheless, a substantial part of the real picture survives.  
Recently, Afgoustidis and Aubert established versions of Wassermann's theorem for broad classes of components of the tempered dual of classical $p$-adic groups, under natural assumptions on Weyl stabilizers and $R$-groups \cite{AfgAub2022}. 
More recently, Clare and Crisp have given a general description of the Plancherel components of the reduced $C^*$-algebra of a reductive $p$-adic group, up to Morita equivalence with twisted crossed products,
and used this description to study their $K$-theory \cite{ClareCrisp2026}. 
These developments make clear that the $C^*$-algebraic structure of local reductive groups is closely tied to three basic ingredients:
parabolic induction, the discrete series of Levi subgroups, and normalized intertwining operators.

The purpose of the present paper is to investigate a global analogue of this picture.  
Let $G$ be a connected reductive group over a number field $F$.  
At the global level, however, the ordinary reduced group $C^*$-algebra $C_r^*\bigl(G(\mathbb A)\bigr)$ is no longer the object most naturally adapted to the global Langlands conjectures for $G$.  
Indeed, by definition, it is associated
with the regular representation of the locally compact group $G(\mathbb A)$ on $L^2(G(\mathbb A))$. 
Therefore it detects the tempered dual of $G(\mathbb A)$.  
Global Langlands theory, however, singles
out the automorphic representation
\begin{center}
   $R_G:G(\mathbb A)\longrightarrow
 U\left(L^2\bigl(G(F)\backslash G(\mathbb A)\bigr)\right)$ 
\end{center}
and its cuspidal subrepresentations.
The spectral decomposition of this representation contains both the discrete and the continuous automorphic spectra. 
In particular, it contains generally non-tempered automorphic representations.  
Although the Ramanujan conjecture predicts that cuspidal representations of $\GL(n)$ are tempered, there exist counterexamples for $\Sp(4)$ \cite{HoweShapiro1979}. 
Thus neither the reduced nor the full group $C^*$-algebra isolates the representation-theoretic support 
relevant to automorphic forms. 
Langlands functoriality also predicts correspondences
between automorphic representations associated with homomorphisms of
$L$-groups (see \cite{Cogdell2003} and \cite[\S 26]{Art05}).  

Consequently, if one seeks a global operator-algebraic object compatible with the Langlands philosophy, it is natural to retain precisely the part of the unitary dual that occurs in the
automorphic spectrum in the sense of weak containment.  
Motivated by this observation, we introduce the \emph{automorphic $C^*$-algebra} of $G$
\begin{equation*}
 C^*_{\mathcal A}\bigl(G(\mathbb A)\bigr)
 :=
 R_G\bigl(C^*(G(\mathbb A))\bigr)
 \cong
 C^*(G(\mathbb A))/\ker R_G.
\end{equation*}
Equivalently, the irreducible representations of
$C^*_{\mathcal A}(G(\mathbb A))$ are precisely automorphic representations, i.e., the irreducible unitary representations of $G(\mathbb A)$ which are weakly contained in $R_G$.

Our aim is to describe the structure of
$C^*_{\mathcal A}(G(\mathbb A))$ using Langlands' spectral
decomposition by discrete Levi data.  
Assume $F=\mathbb{Q}$ and let $P=MN$ be a standard
parabolic subgroup of $G$. 
Let $A_M$ be the maximal central subgroup of $M$ over $\mathbb{Q}$ which is a $\mathbb{Q}$-split torus. 
We simply write $A_M$ for $A_M(\mathbb R)^0$ and then identify $\widehat{A_M}$ with $i\mathfrak a_M^*$. 
Let $\sigma\in\Pi_{\disc}\bigl(M(\mathbb A)^1\bigr)$ be an irreducible discrete automorphic representation.  
We denote by
$H_{M,\sigma}$ the full $\sigma$-isotypic subspace of $L^2_{\disc}\bigl(M(F)\backslash M(\mathbb A)^1\bigr)$, which includes its automorphic multiplicity. 
Associated to $(M,\sigma)$, we have 
the Hilbert $C_0(\widehat{A_M})$-module
\begin{equation*}
 \mathcal E_{M,\sigma}
 :=
 \Ind_{P(\mathbb A)}^{G(\mathbb A)}
 \left(
 C_0(\widehat{A_M},H_{M,\sigma})
 \right).
\end{equation*}
Its fiber over $\lambda\in i\mathfrak a_M^*$ is the Hilbert space of
the induced automorphic representation
\[
 \Ind_{P(\mathbb A)}^{G(\mathbb A)}(\sigma_\lambda),
 \qquad
 \sigma_\lambda
 =
 \sigma\otimes
 e^{\langle\lambda,H_M(\,\cdot\,)\rangle}.
\]
For a Weyl group element $w$, the global normalized intertwining operators $M(w,\lambda)$ of Langlands and Arthur provide the symmetries of this family. 
If we set
\begin{center}
    $ W(G,M,\sigma):=\{w\in W(G,M):{}^w\sigma\simeq\sigma\}$,
\end{center}
then the scattering operators define twisted unitary automorphisms of
$\mathcal E_{M,\sigma}$ by
\begin{center}
$(U_wF)(\lambda):=
 M(w,w^{-1}\lambda)F(w^{-1}\lambda)$.  
\end{center}
Their exact functional equations imply
that $W(G,M,\sigma)$ acts by automorphisms on $\mathcal K_{C_0(\widehat{A_M})}
 \bigl(\mathcal E_{M,\sigma}\bigr)$. 
This is the global analogue of the Weyl-group actions occurring in the local $C^*$-Plancherel decompositions above. 
The underlying spectral decomposition is the Langlands decomposition by the discrete spectra of Levi subgroups, where we use the formulations of \cite[\S 7]{Art05} and \cite[\S VI.2]{MW1995}.

The main result of this paper is an operator-algebraic realization of the
Langlands spectral decomposition in terms of the discrete automorphic spectra
of Levi subgroups. 
\begin{theorem}\label{tmain}
Let $G$ be a connected reductive group over a number field.   
There is a natural injective $*$-homomorphism of $C^*$-algebras
\begin{equation}\label{eIntroMain}
    \Phi:
C^*_{\mathcal A}\bigl(G(\mathbb A)\bigr)
\longrightarrow
\bigoplus_{[M,\sigma]}
\mathcal K_{C_0(\widehat{A_M})}
\bigl(\Ind_{P(\mathbb A)}^{G(\mathbb A)}
C_0(\widehat{A_M},H_{M,\sigma})\bigr)^{W(G,M,\sigma)},
\end{equation}
where $[M,\sigma]$ runs through the associate classes of discrete Levi
data. 
\end{theorem}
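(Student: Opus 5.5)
The plan is to build $\Phi$ from the Langlands spectral decomposition, realized as a unitary isomorphism of $L^2(G(F)\backslash G(\mathbb A))$ onto a direct sum of spaces of $W(G,M,\sigma)$-invariant square-integrable sections. Conjugating $R_G$ by this unitary will make $\Phi$ injective automatically.

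\emph{Step 1: the spectral isometry.} For each discrete Levi datum $(M,\sigma)$ and each Schwartz-type section $F$ of the induced family, I would form the Eisenstein pseudo-integral
\[
E_F(g)=\int_{i\mathfrak a_M^*}E\bigl(g,F(\lambda),\lambda\bigr)\,d\lambda .
\]
By the Langlands--Arthur formulation (\cite[\S 7]{Art05}, \cite[\S VI.2]{MW1995}), summing over associate classes $[M,\sigma]$ gives a unitary operator
\[
\mathcal U:\bigoplus_{[M,\sigma]}L^2\bigl(i\mathfrak a_M^*,\mathcal I_{M,\sigma}\bigr)^{W(G,M,\sigma)}\longrightarrow L^2\bigl(G(F)\backslash G(\mathbb A)\bigr).
\]
Here $\mathcal I_{M,\sigma}(\lambda)$ is the Hilbert space of $\Ind_{P(\mathbb A)}^{G(\mathbb A)}(\sigma_\lambda)$. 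Invariance is taken for $U_w$, which is well defined because the functional equations $M(w,\lambda)$ are unitary on $i\mathfrak a_M^*$ and satisfy the cocycle relation. The normalization constant $|W(G,M,\sigma)|^{-1}$ and the identification of $\sigma$-isotypic multiplicity spaces inside $H_{M,\sigma}$ have to be fixed carefully here. The operator $\mathcal U$ intertwines $R_G$ with the direct integral $\bigoplus\int^\oplus \Ind(\sigma_\lambda)\,d\lambda$.

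\emph{Step 2: the fibers give compact operators.} For $f\in C_c^\infty(G(\mathbb A))$, conjugation by $\mathcal U$ sends $R_G(f)$ to the family of decomposable operators $\lambda\mapsto\Ind(\sigma_\lambda)(f)$. I need to show this family defines an element of $\mathcal K_{C_0(\widehat{A_M})}(\mathcal E_{M,\sigma})$, in three parts.
\begin{itemize}
\item[(a)] Fix $f$ bi-invariant under an open compact $K_f$ and $K_\infty$-finite. Only finitely many $K$-types contribute. Since the induced spaces are realized compactly on $K$, the operator is $\lambda$-continuous into a fixed finite-rank piece.
\item[(b)] The operator norm tends to $0$ as $|\lambda|\to\infty$. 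I would prove this by a Riemann--Lebesgue argument via the Harish-Chandra homomorphism/infinitesimal character: the function $\lambda\mapsto\Ind(\sigma_\lambda)(f)$ is essentially a Fourier transform along $A_M$ of a compactly supported function. This gives $C_0$-decay.
\item[(c)] Only finitely many $\sigma$ with given $K$-type and bounded infinitesimal character contribute. This holds by finiteness of the discrete spectrum with fixed level and Casimir eigenvalue, so the sum over $[M,\sigma]$ is $c_0$-convergent.
\end{itemize}
Each fiber has finite rank for fixed $K$-types, so these continuous sections lie in the compact operators of the Hilbert module. They commute with $U_w$ because $M(w,\lambda)$ intertwines $\Ind(\sigma_\lambda)$ with $\Ind({}^w\sigma_{w\lambda})$. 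Hence they lie in the fixed-point algebra.

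\emph{Step 3: extending to $C^*_{\mathcal A}$.} Step 2 gives a $*$-homomorphism $f\mapsto\Phi(f)$ on $C_c^\infty$. I would then check the norm identity $\|\Phi(f)\|=\sup_{[M,\sigma],\lambda}\|\Ind(\sigma_\lambda)(f)\|$. By Step 1, $R_G(f)$ is unitarily equivalent to a direct integral, so $\|R_G(f)\|$ is the essential supremum of the fiber norms. Continuity in $\lambda$ upgrades the essential supremum to a true supremum, so the two norms coincide. Consequently $\Phi$ extends isometrically from $R_G(C_c^\infty)$, which is dense in $C^*_{\mathcal A}$, and is therefore injective.

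\emph{Main obstacle.} The hardest step is Step 2, specifically continuity and decay uniformly across the infinitely many data $[M,\sigma]$. The $c_0$-summability requires uniform control of discrete spectra of Levi subgroups with fixed $K$-type, which is a nontrivial finiteness result that I would cite from Harish-Chandra and Langlands. Verifying that $\mathcal U$ is unitary with the invariant-section normalization also needs care, since the residual spectrum must be absorbed into $\sigma$ discrete in $M$. I would handle this by citing the Moeglin--Waldspurger decomposition directly rather than reproving it.
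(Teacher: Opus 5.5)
Your proposal is correct in outline, but it reaches the theorem by a genuinely different route. Your Step 1 coincides with the paper's decomposition \eqref{edecMdisc}; after that the two arguments diverge.

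\textbf{The paper's route.} It never computes fiberwise on the Hecke algebra. Instead it works with Hilbert modules throughout:
\begin{itemize}
\item it realizes $\Phi_{M,\sigma}$ as the left action on the interior tensor product $X_P^{\mathcal A}\otimes_{C^*_{\mathcal A}(M)}\mathcal H_{M,\sigma}$ of a Rieffel induction correspondence;
\item it shows this action factors through $C^*_{\mathcal A}(G(\mathbb A))$ via the weak containment $\Ind_P^G R_M\prec R_G$ (Proposition~\ref{pGviaGA}), which involves all of $R_M$, including its continuous spectrum;
\item it gets compactness abstractly, from the imprimitivity isomorphism $\mathcal K_{C^*(P)}(X_P^G)\cong C(G/P)\rtimes G$ with $G/P$ compact, the CCR property of adelic groups, and Lance's tensor lemma;
\item it gets the $c_0$-condition over $\sigma$ from M\"uller's trace-class theorem for $R_{M,\disc}$ together with Lemma~\ref{lHmod0};
\item only then does it use $R_G=\bigoplus R_{M,\sigma}$ to deduce injectivity.
\end{itemize}

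\textbf{Your route.} You check by hand that $\lambda\mapsto\Ind(\sigma_\lambda)(f)$ is a norm-continuous, finite-rank, $C_0$ family for bi-$K$-finite $f$. You then let the spectral decomposition give the norm identity $\|\Phi(f)\|=\|R_G(f)\|$. That single identity delivers factorization through the automorphic quotient, extension to $C^*_{\mathcal A}$, and injectivity (indeed isometry) at once.

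\textbf{What each buys.} You need weak containment only for the discrete Levi data that actually occur in the direct integral. This bypasses the paper's treatment of the continuous spectrum of $M$ and the imprimitivity/CCR machinery. The price is explicit harmonic analysis in place of M\"uller's theorem: admissibility, Riemann--Lebesgue along $A_M$, and Harish-Chandra-type finiteness. The paper's route, in exchange, produces the correspondence $X_P^{\mathcal A}$ as a structural object. It reuses that object in Lemma~\ref{lCAGfactor} and in the $\GL(n)$ surjectivity arguments.

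\textbf{A point to make explicit in step (c).} Finiteness of the set of $\sigma$ with given level, $K$-types and bounded infinitesimal character gives the $c_0$-property only if $\sup_\lambda\|\Ind(\sigma_\lambda)(f)\|$ is small for every other $\sigma$, uniformly in $\lambda$. Riemann--Lebesgue for a fixed $\sigma$ does not supply this uniformity. What does supply it is the bound
\[
\|\pi(f)\|\le|\chi_\pi(z)|^{-N}\,\|z^N*f\|_{L^1},
\]
with $z$ running over a finite set of generators of $\mathcal Z(\mathfrak g)$. Apply it to the infinitesimal character $\nu_\sigma+\lambda$, noting that $|\nu_\sigma+\lambda|\ge|\nu_\sigma|$ by orthogonality. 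Properness of $\mathfrak h_{\mathbb C}^*\to\mathfrak h_{\mathbb C}^*/W$ then forces $\max_j|\chi(z_j)|\to\infty$ as $|\nu_\sigma+\lambda|\to\infty$. This gives the uniform smallness you need, and it gives your decay statement (b) as well.
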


We then apply the classification of automorphic representations of $\GL(n)$ and its inner forms by Moeglin--Waldspurger \cite{MW1989} and Badulescu-Renard \cite{BaRe10}. 

\begin{corollary}\label{cmain}
If $G=\GL(n)$ or an inner form of $\GL(n)$, the homomorphism $\Phi$ above is an isomorphism of $C^*$-algebras.   
\end{corollary}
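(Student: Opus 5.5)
Given Theorem~\ref{tmain}, $\Phi$ is an injective $*$-homomorphism. Its image is therefore closed, and surjectivity is all that remains. The plan is to show that the image is a $C^*$-subalgebra of the target
\[
B=\bigoplus_{[M,\sigma]}\mathcal K_{C_0(\widehat{A_M})}\bigl(\mathcal E_{M,\sigma}\bigr)^{W(G,M,\sigma)}
\]
which is ``rich'' enough to be everything. I would use a Stone--Weierstrass type criterion for $C^*$-algebras of type I. Concretely, the target is a continuous-trace-like (in fact liminal/CCR over a locally compact Hausdorff base modulo finite groups) algebra, so it suffices to show two things. First, every irreducible representation of $B$ restricts to an irreducible representation of $\Phi(C^*_{\mathcal A})$. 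Second, inequivalent irreducibles of $B$ stay inequivalent on the image. Then Dixmier's Stone--Weierstrass theorem for CCR/type I algebras (or an explicit fiberwise argument plus a partition-of-unity gluing over $i\mathfrak a_M^*/W(G,M,\sigma)$) gives $\Phi$ onto.

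\textbf{Step 1: the irreducibles of $B$.} I would determine $\widehat B$ explicitly. For a fixed $[M,\sigma]$ and $\lambda\in i\mathfrak a_M^*$, the fiber at $\lambda$ of the fixed-point algebra is the commutant of the stabilizer $W_\lambda=\{w\in W(G,M,\sigma): w\lambda=\lambda\}$ acting via $M(w,\lambda)$ on $\Ind(\sigma_\lambda)\otimes(\text{multiplicity})$. Its irreducible representations are the isotypic pieces for this finite-group action. For $\GL(n)$ and inner forms, I would invoke Moeglin--Waldspurger and Badulescu--Renard, specifically the following consequences:
\begin{enumerate}
\item Discrete automorphic spectrum has multiplicity one, so $H_{M,\sigma}$ is irreducible.
\item Local components are unitary Speh-type representations, and the normalized operators $M(w,\lambda)$ at $\lambda\in i\mathfrak a_M^*$ act on $\Ind(\sigma_\lambda)$ as scalars. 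By Tadi\'c/Bernstein irreducibility of unitary parabolic induction for $\GL(n)$, and its analogues of Sécherre and Badulescu for inner forms, $\Ind(\sigma_\lambda)$ is irreducible.
\end{enumerate}
Hence every fiber of $B$ is the full compact-operator algebra $\mathcal K(\Ind\sigma_\lambda)$, and $\widehat B$ is the set of $\Ind(\sigma_\lambda)$ with $\lambda$ modulo $W(G,M,\sigma)$.

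\textbf{Step 2: the two Stone--Weierstrass conditions.} The composite $C^*_{\mathcal A}\to B\to\mathcal K(\Ind\sigma_\lambda)$ is the integrated form of the irreducible unitary representation $\Ind(\sigma_\lambda)$ of $G(\mathbb A)$. It is therefore irreducible with dense, hence full compact, image, since $G(\mathbb A)$ is CCR. This gives the first condition. For the second, I need that $\Ind(\sigma_\lambda)\simeq\Ind(\sigma'_{\lambda'})$ forces $[M,\sigma,\lambda]=[M',\sigma',\lambda']$ up to $W(G,M,\sigma)$. This is exactly the uniqueness part of the Moeglin--Waldspurger classification (strong multiplicity one plus Jacquet--Shalika for isobaric sums), transported to inner forms by the global Jacquet--Langlands of Badulescu--Renard. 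I would also check that distinct associate classes $[M,\sigma]$ give distinct representations, which is the same uniqueness statement.

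\textbf{Step 3: conclusion, and the main obstacle.} With Steps 1 and 2, the image is a rich subalgebra of the CCR algebra $B$ separating its spectrum, so it equals $B$. The main obstacle I expect is Step 1 at the level of Hilbert modules rather than pointwise. Two things have to be checked. First, the fixed-point algebra really has fiber $\mathcal K(\Ind\sigma_\lambda)$ at points with nontrivial stabilizer $W_\lambda$; this requires $M(w,\lambda)$ to be a scalar there, and I would try to get it from irreducibility together with the unitarity of $M(w,\lambda)$. Second, the Stone--Weierstrass theorem for type I algebras is only known in restricted forms. I would therefore first try the more hands-on route: use local surjectivity onto $\mathcal K$ fiberwise, show the image is a $C_0(i\mathfrak a_M^*/W)$-submodule via central multipliers coming from the Bernstein center/Hecke algebra, and conclude by a partition-of-unity argument over the orbit space.
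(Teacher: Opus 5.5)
Your proposal is correct and is essentially the paper's argument. The same inputs go into Dixmier's Stone--Weierstrass theorem for postliminal algebras, as in Proposition~\ref{psurjGLncomp}: multiplicity one; irreducibility of $\Ind_P^G(\sigma_\lambda)$, which makes the stabilizer operators scalar so every fiber of the fixed-point algebra is all of $\mathcal K$; uniqueness within a datum (Theorem~\ref{tuniquesigmaparaGLn}); and disjointness across data (Proposition~\ref{pGLndiscdisj} and its inner-form analogue).

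The differences are organizational. You apply Stone--Weierstrass once to the whole $c_0$-direct sum, whose spectrum is the disjoint union of the component spectra, so disjointness enters as part of the separation hypothesis rather than through the Clare--Crisp--Higson gluing lemma used in Corollary~\ref{csurjalongMsigma}. For inner forms, you get uniqueness by Jacquet--Langlands transfer instead of the paper's direct appeal to Badulescu--Renard's cuspidal-support theorem.
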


We briefly describe the main ingredients of the proof.  The first step is to
translate parabolic induction into the language of Hilbert $C^*$-modules.
For every standard parabolic subgroup $P=MN$, we construct in Section \ref{sHilbertC} an automorphic Hilbert correspondence $X_P^{\mathcal A}$ from $C^*_{\mathcal A}(G(\mathbb A))$ to
$C^*_{\mathcal A}(M(\mathbb A))$. 
It is obtained from the usual Rieffel
induction correspondence by showing that the left action of
$C^*(G(\mathbb A))$ factors through the automorphic quotient.  Tensoring this
correspondence with the family
$C_0(\widehat{A_M},H_{M,\sigma})$ realizes, fiber by fiber, the family of
automorphically induced representations $\Ind_{P(\mathbb A)}^{G(\mathbb A)}(\sigma_\lambda)$ with $\lambda\in\widehat{A_M}\simeq i\mathfrak a_M^*$. 

The second step, carried out in Section \ref{scptCAG}, is to prove the compactness
properties needed to pass from the Hilbert-module realization to a
$C^*$-algebraic decomposition.  The automorphic $C^*$-algebra of a Levi
subgroup acts through compact operators on the modules associated with its
discrete automorphic spectrum, and after parabolic induction the same remains
true for the action of $C^*_{\mathcal A}(G(\mathbb A))$.  Combining this with
the orthogonal decomposition of the discrete spectrum into its
$\sigma$-isotypic subspaces gives compact operator-valued component maps. 

In Section \ref{sinjG}, we incorporate the global scattering operators.  
The normalized intertwining operators of Langlands and Arthur give, for $w\in W(G,M,\sigma)$, twisted unitary automorphisms
\begin{center}
    $(U_wF)(\lambda)
=
M(w,w^{-1}\lambda)F(w^{-1}\lambda)$
\end{center}
of $\mathcal E_{M,\sigma}$. 
Their functional equations imply the action of $W(G,M,\sigma)$ on $\mathcal K_{C_0(\widehat{A_M})} \bigl(\mathcal E_{M,\sigma}\bigr)$. 
The image of the automorphic $C^*$-algebra in each component is contained in the corresponding fixed-point algebra. 
With the Langlands spectral decomposition by discrete Levi data, we prove Theorem \ref{tmain}: $\Phi$ is an injective homomorphism (see Theorem \ref{tmain1}). 

In Section \ref{sisoGLn}, we establish the remaining surjectivity for $G=\GL(n)$.   
Here the global classification is sufficiently rigid to identify the irreducible representations of each fixed-point component with the automorphic representations arising from the corresponding discrete Levi datum. 
Two features are crucial. 
First, discrete automorphic representations of $\GL(m,\mathbb A)$ occur with multiplicity one.  
Second, the M\oe glin--Waldspurger description of the discrete spectrum in terms of Speh representations \cite{MW1989}, together with the Jacquet--Shalika theory \cite{JacShlk1981} and strong multiplicity one, gives the required uniqueness and disjointness of the automorphic induction data.  
Consequently distinct discrete Levi data give disjoint automorphic supports, while the Weyl group accounts exactly for the equivalences within a fixed datum.  
This allows the component maps to be shown surjective and hence proves that $\Phi$ is an isomorphism for $\GL(n)$ (see Corollary \ref{cisoGLn}). 

In Section \ref{sisoinnerGLn}, we apply the classification of automorphic representations of inner forms of $\GL(n)$. 
We start with the global Jacquet--Langlands correspondence for inner forms of $\GL(n)$ established by Badulescu--Renard \cite{BaRe10}. 
A similar theory of Speh representations is available in this setting and yields the corresponding disjointness property of the discrete Levi datum of the inner form. 
The isomorphism $\Phi$ for the inner form case then follows (see Corollary \ref{cisoInnerGLn}).  

We conclude with a brief outline of the paper. 
In Section \ref{sautodec}, we recall the Langlands spectral decomposition and refine it according to individual discrete Levi data.  
Section \ref{sHilbertC} constructs the Hilbert correspondences implementing automorphic parabolic induction.
In Section \ref{scptCAG}, we establish the compactness properties needed for the $C^*$-algebraic decomposition.  
In Section \ref{sinjG}, we construct the component homomorphisms, introduce the global Weyl actions through normalized
intertwining operators, and prove \eqref{eIntroMain}.
Section \ref{sisoGLn} specializes to $\GL(n)$ and uses the classification and uniqueness properties of its automorphic spectrum to prove that \eqref{eIntroMain} is an isomorphism. 
Finally, Section \ref{sisoinnerGLn} extends  the results to inner forms of $\GL(n)$.

\section{The Langlands decomposition by discrete Levi datum}\label{sautodec}

Let $G$ be a connected reductive group over a number field $F$. 
Let $G(\mathbb{A})$ be the group of adelic points which contains the group of $F$-rational points $G(F)$ as a discrete subgroup. 
We will apply the notations and assumptions from \cite[Part I]{Art05}.
For simplicity, we assume $F=\mathbb{Q}$ from now on. 

\begin{itemize}
\item $A_G:=$ the largest central subgroup of $G$ over $\mathbb{Q}$ which is a $\mathbb{Q}$-split torus, i.e., $A_G\cong_{\mathbb{Q}}\GL(1)^k$ for some $k\geq 0$. 
\item $X(G)_{\mathbb{Q}}:=\Hom_{\mathbb{Q}}(G,\GL(1))$, which is a free abelian group of rank $k=\rk A_G$. 
\item $\mathfrak{a}_{G}:=\Hom_{\mathbb{Z}}(X(G)_{\mathbb{Q}},\mathbb{R})$. We define a surjective homomorphism $H_G\colon G(\mathbb{A})\to\mathfrak{a}_{G}$ given by
\begin{equation}\label{eHG}
    \langle H_G(x),\chi\rangle=\log|\chi(x)|_{\mathbb{A}}
\end{equation}
for $x\in G(\mathbb{A})$ and $\chi\in X(G)_{\mathbb{Q}}$.

\item $G(\mathbb{A})^{1}:=\{x\in G(\mathbb{A})\mid H_{G}(x)=0\}=\bigcap_{\chi\in X(G)_{\mathbb{Q}}}\ker|\chi(\cdot)|_{\mathbb{A}}$. 
Note that 
\begin{center}
 $G(\mathbb{A})=G(\mathbb{A})^{1}\times A_G(\mathbb{R})^0$. 
\end{center} 
If $G$ is semisimple, we have $G(\mathbb{A})^{1}=G(\mathbb{A})$. 

\item $R_G$ = the representation of $G(\mathbb{A})$ on $L^2(G(\mathbb{Q})\backslash
     G(\mathbb{A}))$. 

\item $R_{G}^1$ = the representation of $G(\mathbb{A})$ on $L^2(G(\mathbb{Q})A_{G}(\mathbb{R})^0\backslash
     G(\mathbb{A}))$. 
Note that $A_{G}(\mathbb{R})^0$ acts trivially by $R_G^1$. 

\item For $\lambda\in \mathfrak{a}_{G,\mathbb{C}}^*$, $R_{G,\lambda}=e^{\lambda\cdot H_{G}(\cdot)}\cdot R_{G}^1$, whose underlying space is also $L^2(G(\mathbb{Q})A_{G}(\mathbb{R})^0\backslash
     G(\mathbb{A}))$. 
Note that $A_{G}(\mathbb{R})^0$ acts via the factor $e^{\lambda\cdot H_{G}(\cdot)}$. 

\item $R_{G,\disc}$ = the representation of $G(\mathbb{A})$ on the discrete spectrum $L^2_{\disc}(G(\mathbb{Q})A_{G}(\mathbb{R})^0\backslash G(\mathbb{A}))$, which is a subrepresentation of $R_G^1$.

\item $R_{G,\disc,\lambda}=e^{\lambda\cdot H_{G}(\cdot)}\cdot R_{G,\disc}$, the subrepresentation of $R_{G,\lambda}$ on its discrete spectrum. 

\item As $G(\mathbb{A})= G^{1}(\mathbb{A})\times A_G(\mathbb{R})^0$, we have
\begin{equation}\label{eL2GG1A}
    L^{2}\bigl(G(\mathbb{Q})\backslash G(\mathbb{A})\bigr)
\cong
L^{2}\bigl(G(\mathbb{Q})\backslash G^{1}(\mathbb{A})\bigr)
\widehat{\otimes}L^{2}(A_G(\mathbb{R})^0). 
\end{equation}
Since $L^{2}(A_G(\mathbb{R})^0)\cong\int_{\widehat {A_G(\mathbb{R})^0}}^{\oplus}\mathbb{C}_{\chi}\,d\chi$, we further obtain
\begin{equation}\label{eRGRGlambda}
R_G\cong
\int_{\widehat {A_G(\mathbb{R})^0}}^{\oplus}
\left(R_{G}^1\otimes\chi\right)\,d\chi
\cong \int_{i\mathfrak{a}_{G,\mathbb{R}}^*}^{\oplus}
R_{G,\lambda}\,d\lambda.
\end{equation}

\end{itemize}

\begin{definition}
An \textbf{automorphic representation} of $G(\mathbb{A})$ is defined to be an irreducible representation of $G(\mathbb{A})$ which is weakly contained in $R_G$. 
We denote the set of equivalence classes of automorphic representations by $\Pi_{\mathcal{A}}(G(\mathbb{A}))$, or simply $\Pi_{\mathcal{A}}(G)$. 

A \textbf{discrete automorphic representation} of $G(\mathbb{A})$ is defined to be an irreducible subrepresentation contained in $R_{G,\disc,\lambda}$ for some $\lambda \in i\mathfrak{a}_{G,\mathbb{R}}^*$. 
We denote the set of equivalence classes of discrete automorphic representations by $\Pi_{\disc}(G(\mathbb{A}))$, or simply $\Pi_{\disc}(G)$. 
\end{definition} 

Note that a discrete automorphic representation $\pi$ is an irreducible subrepresentation of $R_G$ on   $L^2(G(\mathbb{Q})\backslash G(\mathbb{A}))$ if and only if $G(\mathbb{A})=G(\mathbb{A})^1$. 
Otherwise, $L^2(G(\mathbb{Q})\backslash G(\mathbb{A}))$ has no irreducible subrepresentations and $\pi$ is only weakly contained in $R_G$ as a term in the direct integral over $\widehat{A_{G}(\mathbb{R})^0}$ in the sense of \eqref{eRGRGlambda}. 
Thus, we have
\begin{equation}\label{edualGG1}
    \Pi_{\mathcal{A}}(G)\simeq \Pi_{\mathcal{A}}(G^1)\times\widehat{A_G(\mathbb{R})^0} \text{ and } \Pi_{\disc}(G)\simeq \Pi_{\disc}(G^1)\times\widehat{A_G(\mathbb{R})^0}
\end{equation}

Let $P_0$ be a fixed minimal parabolic subgroup of $G$ over $\mathbb{Q}$. 
Let $P_0=M_0 N_0$ be a fixed Levi decomposition and $A_0$ be the central subgroup of $M_0$.  
For a standard parabolic subgroup $P$, we let $P=M_P N_P$ be its Levi decomposition with the unique Levi subgroup $M_P$ of $P$ that contains $M_0$. 
Let $A_P=A_{M_P}$ be the central subgroup of $M_P$ and $\mathfrak{a}_{P}=\mathfrak{a}_{M_P}$ be the real vector space given by the Lie algebra of $A_P$ (after tensoring $\mathbb{R}$). 

\begin{itemize}
\item $W=W(G,M_0)=N(A_0,G)/M_0$. 

\item $W(G,M_P):=\{w\in W\mid w(M_P)=M_P\}$.  

\item $H_{M,\disc}:=L^2_{\disc}
\bigl(M(\mathbb{Q})\backslash M(\mathbb{A})^1\bigr)\cong 
\bigoplus_{\sigma\in\Pi_{\disc}(M^1)}m(\sigma)\cdot H_\sigma$, where each $m(\sigma)$ is finite. 
This is the underlying space of $R_{M,\disc}$ and $R_{M,\disc,\lambda}$. 

\item $H_{M,\sigma}:=L^2_{\disc}
\bigl(M(\mathbb{Q})\backslash M(\mathbb{A})^1\bigr)[\sigma]=\oplus_{k=1}^{m(\sigma)}H_\sigma$, the $\sigma$-isotypic subspace of $H_{M,\disc}$.

\item $R_{M,\sigma,\lambda}:= R_{M,\disc,\lambda}|_{H_{M,\sigma}}$.

\item For $\lambda\in \mathfrak{a}_{P,\mathbb{C}}^*$ and $g\in G(\mathbb A)$, we denote the induced representation of $G(\mathbb A)$ by
\begin{center}
    $I_{P}(\lambda,g):=\Ind_{P(\mathbb A)}^{G(\mathbb A)}(R_{M,\disc,\lambda}\otimes 1_{N(\mathbb{A})})(g)$.
\end{center}
For $\sigma\in \Pi_{\disc}(M^1)$, we also define the subrepresentation of $I_{P}(\lambda,g)$: 
\begin{center}
   $I_{P,\sigma}(\lambda,g):=\Ind_{P(\mathbb A)}^{G(\mathbb A)}(R_{M,\sigma,\lambda}\otimes 1_{N(\mathbb{A})})(g)$. 
\end{center}

\item $H_P:=$  the underlying space of $I_{P}(\lambda,~\cdot~)$, which is independent of $\lambda\in i\mathfrak{a}_{P}^{*}$. 
It is known that $H_P$ can be realized as measurable functions
\begin{center}
    $\phi\colon N_{P}(\mathbb{A})M_{P}(\mathbb{Q})A_{P}(\mathbb{R})^0\backslash G(\mathbb{A})\to \mathbb{C}$
\end{center}
such that
\begin{itemize}
    \item for any $x\in G(\mathbb{A})$,  
    the function $\phi_{x}(m)=\phi(mx)$ for $m\in M_{P}(\mathbb{Q})\backslash M_{P}(\mathbb{A})^1$ belongs to $L^2_{\disc}(M_{P}(\mathbb{Q})\backslash M_{P}(\mathbb{A})^1)$;
    \item $\|\phi\|^2=\int_{K}\int_{ M_{P}(\mathbb{Q})\backslash M_{P}(\mathbb{A})^1} |\phi(mk)|^2 dm dk<\infty$. 
\end{itemize}

\item  $H_{P,\sigma}:=$ the underlying space of $I_{P,\sigma}(\lambda,~\cdot~)$ for $\sigma\in \Pi_{\disc}(M^1)$. 

\item For two standard parabolic subgroups $P,P'$ of $G$, we let $W(\mathfrak{a}_{P},\mathfrak{a}_{P'})$ be the Weyl set of linear isomorphisms from $\mathfrak{a}_{P}$ onto $\mathfrak{a}_{P'}$ obtained by restriction of elements of the Weyl group $W$. 
The parabolic subgroups $P,P'$ are called {\it associated} if $W(\mathfrak{a}_{P},\mathfrak{a}_{P'})$ is non-empty. 

\item For $s\in W(\mathfrak{a}_{P},\mathfrak{a}_{P'})$ and $\lambda\in \mathfrak{a}_{P,\mathbb{C}}^*$, we define the operator
\begin{center}
    $M(s,\lambda)\colon H_P\to H_{P'}$
\end{center}
given by
\begin{equation}\label{einteropdef}
    \bigl(M(w_s,\lambda)\phi\bigr)(x)=\int\phi(w_s^{-1}nx)e^{(\lambda+\rho_P)(H_{P}(w_s^{-1}nx))}e^{(-s\lambda+\rho_{P'})(H_{P'}(x))},
\end{equation}
where the integral is taken over the quotient $(N_{P'}(\mathbb{A})\cap wN_{P}(\mathbb{A})w^{-1})\backslash N_{P'}(\mathbb{A})$ and $w_s$ is a representative of $s$ in $G(\mathbb{Q})$.

\item If $P=P'$, we have $W(\mathfrak{a}_{P},\mathfrak{a}_{P'})=W(G,M)$ and we may denote the above operator by $M(w,\lambda)$.
\end{itemize}

We have the following results for the operator $M(s,\lambda)$ (see \cite[\S 7]{Art05}). 

\begin{lemma}\label{lM}
For $s,t\in W(\mathfrak{a}_{P},\mathfrak{a}_{P'})$ and $\lambda\in \mathfrak{a}_{P,\mathbb{C}}^*$, we have
\begin{enumerate}
    \item $M(s,\lambda)$ intertwines the action of $I_{P}(\lambda,-)$ and $I_{P'}(s\lambda,-)$ of $G(\mathbb{A})$, i.e., 
\begin{center}
    $M(s,\lambda)I_{P}(\lambda,g)=I_{P'}(s\lambda,g)M(s,\lambda)$
\end{center}
for $g\in G(\mathbb{A})$. 

\item $M(ts,\lambda)=M(t,s\lambda)M(s,\lambda)$.

\item If $\lambda\in i\mathfrak{a}_{P}^*$, $M(s,\lambda)$ is analytic and extends to a unitary operator from $H_P$ to $H_{P'}$. 
\end{enumerate}
\end{lemma}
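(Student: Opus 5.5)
This lemma is cited from \cite[\S 7]{Art05}, and my plan is to recover its three parts in the standard order. I would first establish everything on a domain of absolute convergence: for $\operatorname{Re}\lambda$ in a suitable translate of the positive chamber, the integral \eqref{einteropdef} converges absolutely for $K$-finite $\phi$. Parts (1) and (2) then become formal manipulations of absolutely convergent integrals. Afterwards I would use Langlands' meromorphic continuation and the principle of analytic continuation to extend (1) and (2) to all $\lambda$ where both sides are defined.

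For (1), I substitute $x\mapsto xg$ in \eqref{einteropdef}. The induced action $I_P(\lambda,g)$ is right translation in the realization of $H_P$, once the twist $e^{(\lambda+\rho_P)(H_P(\cdot))}$ is taken into account. Since the integration variable $n$ sits on the left, it commutes with the right translation, and the normalizing factor in \eqref{einteropdef} converts the $\lambda$-twist into the $s\lambda$-twist. This is a direct computation.

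For (2), I would reduce to the case where $t$ and $s$ have lengths that add, $\ell(ts)=\ell(t)+\ell(s)$. In that case the quotient $(N_{P''}\cap w_{ts}N_Pw_{ts}^{-1})\backslash N_{P''}$ factors as an iterated quotient, and Fubini gives the identity in the convergent range. For general $s,t$, I would factor both sides into products of simple reflections and use the rank-one identities, which extend by analytic continuation, to show that the result does not depend on the reduced expression.

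For (3), I expect the main difficulty to be holomorphy on $i\mathfrak a_P^*$ together with unitarity. I would first compute formal adjoints by substituting $n\mapsto n^{-1}$ and using a change of variables, obtaining
\[M(s,\lambda)^*=M(s^{-1},-s\bar\lambda).\]
For $\lambda\in i\mathfrak a_P^*$ this equals $M(s^{-1},s\lambda)$. Combined with (2) and $M(1,\lambda)=\mathrm{Id}$, it gives $M(s,\lambda)^*M(s,\lambda)=\mathrm{Id}$ wherever the operator is holomorphic. On the imaginary axis I would therefore obtain a family of isometries, which is in particular bounded. Holomorphy itself I would take from Langlands' theory: a meromorphic family that is uniformly bounded near $i\mathfrak a_P^*$ cannot have poles there.

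The genuinely hard input is the meromorphic continuation, the functional equations for Eisenstein series, and the resulting absence of poles on the unitary axis. For these I will rely on \cite[\S 7]{Art05} and \cite[\S IV]{MW1995} rather than reprove them. Once analyticity is known, the unitary extension from $K$-finite vectors to all of $H_P$ follows by density.
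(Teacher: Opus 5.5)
Your outline matches the paper's route. The paper gives no argument of its own and simply invokes Langlands' theorem as presented in \cite[\S 7]{Art05} (Theorem~7.2 there). Your sketch is the standard proof behind that citation: formal computations in the convergent range, meromorphic continuation, the functional equations, and the adjoint relation $M(s,\lambda)^*=M(s^{-1},-s\bar\lambda)$ combined with (2) to get unitarity on $i\mathfrak a_P^*$, with the same deep inputs deferred to the same sources.

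Two small cautions:
\begin{enumerate}
\item The rank-one inversion $M(s_\alpha^{-1},s_\alpha\lambda)M(s_\alpha,\lambda)=\mathrm{Id}$ has no common region of absolute convergence. It therefore comes out of the Eisenstein functional equation; it is not a convergent-range identity continued analytically.
\item Passing from isometry on $i\mathfrak a_P^*$ to absence of poles uses the fact that the singularities lie on real root hyperplanes $\langle\lambda,\alpha^\vee\rangle=c$. Boundedness on a totally real subspace alone does not rule out poles in several variables.
\end{enumerate}
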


For an associate class $\mathcal{P}$ of standard parabolic subgroups of $G$, we define $\widehat{L}_{\mathcal{P}}$ to be the Hilbert space of families of measurable functions 
\begin{center}
    $F=\{F_{P}\colon i\mathfrak{a}_{P}^{*}\to H_P, P\in\mathcal{P}\}$ 
\end{center}
such that
\begin{enumerate}
    \item $F_{P'}(s\lambda)=M(s,\lambda)F_{P}(\lambda)$ for $s\in W(\mathfrak{a}_{P},\mathfrak{a}_{P'})$;
    \item $\|F\|^2:=\sum_{P\in\mathcal{P}}n_{P}^{-1}\int_{i\mathfrak{a}_{P}^{*}}\|F_{P}(\lambda)\|^2\dd \lambda<\infty$, where $n_P:=\sum_{P'\in\mathcal{P}}|W(\mathfrak{a}_{P},\mathfrak{a}_{P'})|$.
\end{enumerate}
Consider the map given by Eisenstein series: 
\begin{equation}\label{eFtoEisenstein}
F\mapsto \sum_{p\in\mathcal{P}}n_{P}^{-1}\int_{i\mathfrak{a}_{P}^{*}}E(x,F_P(\lambda),\lambda)\dd \lambda,
\end{equation}
which is defined when $F_{P}(\lambda)$ is a smooth, compactly supported function of  $\lambda$ with values in a finite-dimensional subspace of $\mathcal{H}_{P}^{0}$ (= $K$-finite vectors in $\mathcal{H}_{P}$ for a suitable maximal compact subgroup $K$ of $G(\mathbb{A})$).  
The map \eqref{eFtoEisenstein} extends to a map from $\widehat{L}_{\mathcal{P}}$ onto a closed $G(\mathbb{A})$-invariant subspace $L^2_{\mathcal{P}}(G(\mathbb{Q})\backslash G(\mathbb{A}))$. 
We get the decomposition 
\begin{equation}\label{eautodecP}
    L^2(G(\mathbb{Q})\backslash G(\mathbb{A}))\cong \bigoplus_{\mathcal{P}}L^2_{\mathcal{P}}(G(\mathbb{Q})\backslash G(\mathbb{A})), 
\end{equation}
which is indexed by the associate classes of standard parabolic subgroups (see \cite[Theorem 7.2]{Art05}). 

We recall a theorem by Langlands for automorphic representations of reductive groups (see \cite[Supplement Proposition 2]{BJ}). 
\begin{theorem}\label{tLldspara}
Let $G$ be a reductive group over a number field $F$. 
An irreducible representation $\pi$ of $G(\mathbb{A})$ is automorphic if and only if $\pi$ is an irreducible subquotient of $\Ind_{P(\mathbb{A})}^{G(\mathbb{A})}\delta$ with $\delta\in\Pi_{\cusp}(M(\mathbb{A}))$ for the Levi factor $M$ of a parabolic subgroup $P$ of $G$.  
\end{theorem}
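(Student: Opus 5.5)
The plan is to reduce the statement to two standard pieces of the theory of Eisenstein series: Langlands' spectral decomposition, already recorded in \eqref{eautodecP}, and the description of the discrete spectrum through residues of cuspidal Eisenstein series. One interpretive point matters. In \cite{BJ} ``automorphic'' means an irreducible constituent of the space of automorphic forms, whereas here it means weak containment in $R_G$. I would therefore prove the two implications separately and keep track of which notion each one uses.

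\emph{($\Rightarrow$).} Suppose $\pi$ is weakly contained in $R_G$.
\begin{enumerate}
\item By \eqref{eRGRGlambda} and \eqref{edualGG1} we may pass to $G(\mathbb A)^1$.
\item By \eqref{eautodecP} together with the description of $\widehat L_{\mathcal P}$, the representation $R_G$ is unitarily equivalent to a direct sum, over associate classes $\mathcal P$, of direct integrals over $\lambda\in i\mathfrak a_P^*$ of the induced representations $I_P(\lambda,\cdot)$, modulo the identifications given by the operators $M(s,\lambda)$ of Lemma \ref{lM}.
\item Each $I_P(\lambda,\cdot)$ splits further as $\bigoplus_\sigma I_{P,\sigma}(\lambda,\cdot)$ over $\sigma\in\Pi_{\disc}(M^1)$.
\item An irreducible representation weakly contained in such a direct integral lies in the closure, for the Fell topology, of the set of irreducible constituents of the fibres $I_{P,\sigma}(\lambda)$.
\item To conclude that $\pi$ is itself a constituent of a single $I_{P,\sigma}(\lambda)$, I would use the following two inputs.
\begin{itemize}
\item Fix a $K$-type of $\pi$. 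Only finitely many pairs $(M,\sigma)$ whose induced representations contain that $K$-type have infinitesimal character near that of $\pi$; this is the finiteness theorem of Harish-Chandra and Langlands.
\item The family $\lambda\mapsto I_{P,\sigma}(\lambda)$ is continuous, so its set of constituents is closed in the Fell topology, by a Jacquet-module or character argument. In this step $\lambda$ may leave the imaginary axis; a real part is allowed.
\end{itemize}
\item It remains to rewrite $\sigma$ in cuspidal terms. By Langlands' theorem, every $\sigma\in\Pi_{\disc}(M^1)$ is spanned by iterated residues of Eisenstein series induced from a cuspidal $\delta$ on a Levi $L\subset M$. Hence $\sigma$ is a subquotient of $\Ind^{M(\mathbb A)}_{(P\cap M)(\mathbb A)}\delta_\mu$ for some real parameter $\mu$. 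Here the twist $\delta_\mu$ is still a cuspidal representation of $L(\mathbb A)$, in the non-unitary sense that the statement allows.
\item Transitivity of induction, combined with exactness of parabolic induction on admissible representations, then shows that $\pi$ is a subquotient of $\Ind_{P_L(\mathbb A)}^{G(\mathbb A)}\delta_{\mu+\lambda}$, where $P_L$ is the parabolic subgroup of $G$ with Levi factor $L$ contained in $P$.
\end{enumerate}

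\emph{($\Leftarrow$).} Suppose $\pi$ is a unitary irreducible subquotient of $\Ind\delta$ with $\delta$ cuspidal.
\begin{enumerate}
\item Consider the cuspidal Eisenstein series $E(\phi,\lambda)$ together with its $\lambda$-derivatives and residues at the point in question. These produce automorphic forms that realise $\pi$ as a subquotient of the space of automorphic forms; this is the \cite{BJ} argument, completed by Franke's theorem that such residues and derivatives span that space.
\item One then has to pass from ``constituent of automorphic forms'' to ``weakly contained in $R_G$''. If $\pi$ occurs at a point of $i\mathfrak a^*$, or as a residue lying in $L^2_{\disc}$, then weak containment is immediate from \eqref{eautodecP}.
\item In general I would approximate. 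Matrix coefficients of $\pi$ restricted to a $K$-type would be approximated, uniformly on compacta, by the functions $\lambda\mapsto\langle I_{P,\sigma}(\lambda,g)v,v\rangle$ on a small neighbourhood of a unitary point, integrated against wave packets.
\end{enumerate}

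\emph{Main obstacle.} I expect the converse step to be the hardest part, and I am not sure it holds in full generality under the weak-containment definition. A unitary subquotient occurring at a non-unitary $\lambda$, for instance a complementary-series-type point, need not be obtainable from the $L^2$ spectrum. If the direct approach fails, the fallback is to read the theorem in the \cite{BJ} sense, namely as a statement about constituents of the space of automorphic forms, and to use only the direction ($\Rightarrow$) in what follows. The later sections need only that direction, to control the spectrum of $C^*_{\mathcal A}(G(\mathbb A))$.
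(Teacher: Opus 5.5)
\paragraph{Review.} The paper gives no proof of its own. It cites Langlands' Proposition~2 in the supplement to \cite{BJ}, and there both notions are the ones in your fallback: \emph{automorphic} means a subquotient of the space of automorphic forms, and $\delta$ is cuspidal with an arbitrary, possibly non-unitary, central exponent.

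\paragraph{The direction ($\Leftarrow$) is false in the paper's sense.} Your suspicion is right and can be made definitive. With the paper's weak-containment definition, ($\Leftarrow$) fails, so your steps 2--3 cannot be completed. The obstruction is already in your step 5.
\begin{itemize}
\item Once only finitely many $(M,\sigma)$ are in play, the approximating $\lambda_n$ lie in a compact subset of $i\mathfrak a_P^*$. Their limit is therefore unitary, and there is no reason to let $\lambda$ acquire a real part.
\item Keeping $\lambda$ imaginary gives a sharper statement: $\pi\prec R_G$ if and only if $\pi$ is a constituent of some $I_{P,\sigma}(\lambda)$ with $\sigma\in\Pi_{\disc}(M^1)$ and $\lambda\in i\mathfrak a_P^*$. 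Wave packets near unitary points can never reach anything outside this set.
\end{itemize}

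\paragraph{Counterexample.} Take $G=\GL(2)$ and $\pi_s=\Ind_{B(\mathbb A)}^{G(\mathbb A)}(|\cdot|^{s}\otimes|\cdot|^{-s})$ with $0<s<\tfrac{1}{2}$.
\begin{itemize}
\item All local factors are complementary series, so $\pi_s$ is irreducible and unitary. It is induced from an automorphic, hence cuspidal, character of the torus.
\item Its cuspidal support $\{|\cdot|^{s},|\cdot|^{-s}\}$ differs from every support occurring at unitary points. Those supports are: a unitary cuspidal representation of $\GL(2)$; a pair of unitary Hecke characters; or $\{\chi|\cdot|^{1/2+it},\chi|\cdot|^{-1/2+it}\}$ for the one-dimensional residual representations.
\item The characterization above together with Theorem~\ref{tGLncuspdisj} then gives $\pi_s\not\prec R_G$.
\item For $\tfrac{1}{4}<s<\tfrac{1}{2}$ you can also see this directly, by restricting to $\GL(2,\mathbb R)$ and invoking Selberg's $\tfrac{3}{16}$ theorem.
\end{itemize}

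\paragraph{The other reading also fails.} If instead $\Pi_{\cusp}$ is read as consisting of unitary representations, then ($\Rightarrow$) fails. The one-dimensional representations are weakly contained in $R_G$, but their cuspidal support is non-unitary. So the statement is correct only in the \cite{BJ} sense. In that sense ($\Leftarrow$) is Langlands' Eisenstein-series argument, which is your step 1. It needs no input from Franke's spanning theorem, and steps 2--3 should simply be dropped.

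\paragraph{What survives.} Your ($\Rightarrow$) is sound, and it is a genuinely different route from the citation. It uses the $L^2$ decomposition, the finiteness theorem and a Fell-closure argument. It shows that every representation weakly contained in $R_G$ is automorphic in the \cite{BJ} sense, which is the correct inclusion, and it is strict. Its most valuable output, however, is the intermediate $L^2$ characterization from step 5, not the cuspidal rewriting of steps 6--7.

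\paragraph{What the paper actually needs.} The step-5 characterization is also what the paper really needs.
\begin{itemize}
\item In Proposition~\ref{pGviaGA}, the theorem is used to obtain $\sigma\prec\Ind_Q^M(\delta_\lambda)$ with $\delta$ cuspidal and $e^\lambda$ \emph{unitary}. Your ($\Rightarrow$) does not give this.
\item For $\GL$ this form fails as soon as the discrete datum of $\sigma$ has a residual factor, by Lemma~\ref{lGLnautoirrep} and Theorem~\ref{tGLncuspdisj}.
\item Step 5 applied to $M$ instead exhibits $\sigma$ as a constituent of $\Ind_Q^M(\tau_\lambda)$ with $\tau$ discrete and $\lambda$ unitary.
\item Then $\Ind_P^G\sigma$ is a summand of $\Ind_{QN_P}^G(\tau_\lambda)$, which is weakly contained in $R_G$ by \eqref{eautodecP} and continuity in $\lambda$.
\end{itemize}
So your closing claim should be sharpened: the later sections need this $L^2$ form of ($\Rightarrow$), not the cuspidal one.
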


\subsection{The decomposition via full discrete Levi spectrum}\label{ssdecfulldisc}

For an associate class $\mathcal{P}$ of standard parabolic subgroups of $G$, we take one representative $P=M_{P}N_{P}\in\mathcal{P}$ and set 
\begin{center}
   $W_{\mathcal P}=
W(\mathfrak{a}_{P},\mathfrak{a}_{P})$,
\end{center}
which is isomorphic to $W(G,M_P)$. 
Then we define
\begin{center}
    $I_{P}(\lambda)
=
\Ind_{P(\mathbb{A})}^{G(\mathbb{A})}
\left(
1_{N_P(\mathbb{A})}
\otimes
R_{M_P,\disc,\lambda}
\right),
\qquad
\lambda\in i\mathfrak{a}_{P}^*$, 
\end{center}
which acts on the Hilbert space $\mathcal{H}_{P}$. 
The space $\widehat L_{\mathcal P}$ above can then be rewritten as
\begin{center}
    $\widehat L_{\mathcal P}
\cong
\left[
\int_{i\mathfrak{a}_{P}^*}^{\oplus}
\mathcal{H}_{P}\,d\lambda
\right]^{W_{\mathcal P}}$.
\end{center}
It may be realized as the Hilbert space of measurable functions 
\begin{center}
    $F\colon i\mathfrak{a}_{P}^*\to \mathcal{H}_P$
\end{center}
such that
\begin{enumerate}
    \item $F(w\lambda)=M(w,\lambda)F(\lambda)$ for all $w\in W_{\mathcal P}$;
    \item $\|F\|^2:=\frac{1}{|W(G,M_P)|}\int_{i\mathfrak{a}_{P}^{*}}\|F(\lambda)\|^2\dd \lambda<\infty$. 
\end{enumerate}
Note that the action of $G(\mathbb{A})$ on this space is given by $
g\cdot F(\lambda)
=I_{P}(\lambda,g)F(\lambda)
$. 
Thus \eqref{eautodecP} can be rewritten as
\begin{equation}\label{eautodecPara}
    L^2\bigl(G(\mathbb Q)\backslash G(\mathbb{A})\bigr)
\cong
\bigoplus_{[P]}
\left[
\int_{i\mathfrak{a}_P^*}^{\oplus}
I_P(\lambda)\,d\lambda
\right]^{W_P}
\end{equation}
where $[P]$ runs over the representatives of the associate classes of standard parabolic subgroups,
$W_P=W(\mathfrak{a}_P,\mathfrak{a}_P)$
and
$I_P(\lambda)=\Ind_{P(\mathbb{A})}^{G(\mathbb{A})}
\left(
1_{N_P(\mathbb{A})}
\otimes
R_{M_P,\disc,\lambda}
\right)
$. 

Note that the class $P=G$ gives the discrete spectrum if $G(\mathbb{A})=G(\mathbb{A})^1$. 
Therefore the continuous
part is
$L^2_{\mathrm{cont}}
\bigl(G(\mathbb Q)\backslash G(\mathbb{A})\bigr)
\cong
\bigoplus_{\substack{[P]\\P\neq G}}
\left[
\int_{i\mathfrak{a}_P^*}^{\oplus}
I_P(\lambda)\,d\lambda
\right]^{W_P}.
$

\subsection{The decomposition via the discrete Levi datum}\label{ssdecdiscLevi}

Let $M,M'$ be Levi subgroups of $G$ and $\sigma\in \Pi_{\disc}(M(\mathbb{A})^1), \sigma'\in \Pi_{\disc}(M'(\mathbb{A})^1)$. 
Two pairs $(M,\sigma)$ and $(M',\sigma')$ are associated if there exists a
Weyl element $w$ such that
\begin{equation}\label{e1asso}
    wM=M',
\qquad
w\sigma\simeq \sigma'.
\end{equation}
It is known that two standard parabolic subgroups $P,P'$ are associated in the sense above if and only if their Levi subgroups $M,M'$ are conjugate (see \cite[Lemma 10.2.1]{Getz300}). 
In this way, \eqref{eautodecPara} can be re-written as
\begin{equation}\label{edecfull}
L^2\bigl(G(\mathbb{Q})\backslash G(\mathbb{A})\bigr)
\cong
\widehat{\bigoplus}_{[M]}
\left[
\int_{i\mathfrak{a}_M^{*}}^\oplus
I_{P_M}^G\bigl(R_{M,\disc,\lambda}\bigr)\,d\lambda
\right]_{}^{W(G,M)},
\end{equation}
where $[M]$ runs over associate classes of Levi subgroups and $P_M$ is a standard parabolic subgroup with its Levi factor $M$.

Let $\mathfrak{D}(G)$ denote the set of associate classes of such pairs, which is called the set of {\it discrete Levi data}. 
We fix a Levi subgroup $M=M_P$ of a parabolic subgroup $P$ of $G$. 
For
$\sigma\in\Pi_{\disc}(M^1)$, let
$H_{\sigma}$ be the Hilbert space of $\sigma$, and define
its multiplicity space in the discrete automorphic spectrum by
\begin{center}$
\mathcal M_{M}(\sigma)
:=
\operatorname{Hom}_{M(\mathbb{A})}
\bigl(
H_{\sigma},
L^2_{\disc}(M_P(\mathbb{Q})A_{M_P}(\mathbb{R})^0\backslash
     M_{P}(\mathbb{A}))
\bigr).
$\end{center}
It is finite-dimensional, and $m_{M}(\sigma)
:=
\dim \mathcal M_{M}(\sigma)
$. 
We let
\begin{center}
    $W(G,M,\sigma)
:=
\left\{
w\in W(G,M):
{}^{w}\sigma\simeq\sigma
\right\}$.
\end{center}
For $w\in W(G,M)$, we define the normalized intertwining operator on $\mathcal{E}$ as
\begin{equation}\label{enormintwop}
    (U_wF)(\lambda)
=
M(w,w^{-1}\lambda)F(w^{-1}\lambda).
\end{equation}
Then we have a finer decomposition for \eqref{edecfull}. 

\begin{proposition}
We have the following decomposition of $G(\mathbb{A})$-modules:
\begin{equation}\label{edecMdisc}
    L^{2}\bigl(G(\mathbb{Q})\backslash G(\mathbb{A})\bigr)
\cong
\widehat{\bigoplus}_{[M,\sigma]\in\mathfrak{D}(G)}
\left[
\int_{i\mathfrak{a}_{M}^{*}}^{\oplus}
\mathcal M_{M}(\sigma)
\widehat{\otimes}
\Ind_{P(\mathbb{A})}^{G(\mathbb{A})}
(\sigma_{\lambda})
\,d\lambda
\right]^{W(G,M,\sigma)}
\end{equation}
where the action of $W(G,M,\sigma)$ on the direct integral is the
unitary action defined by the normalized global intertwining
operators. 
\end{proposition}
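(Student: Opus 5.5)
Starting from \eqref{edecfull}, the plan is to refine each summand indexed by an associate class $[M]$ according to the isotypic decomposition of the discrete spectrum of $M$.

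First, decompose the fiber Hilbert space. Write
\[
H_{M,\disc}\cong\widehat{\bigoplus}_{\sigma\in\Pi_{\disc}(M^1)}\mathcal M_M(\sigma)\widehat{\otimes}H_\sigma,
\]
which is just the isotypic decomposition with multiplicity spaces $\mathcal M_M(\sigma)$. Parabolic induction commutes with orthogonal direct sums, and $\lambda$ only twists the $A_M$-action. So for every $\lambda\in i\mathfrak a_M^*$,
\[
I_{P_M}^G(R_{M,\disc,\lambda})\cong\widehat{\bigoplus}_{\sigma}\mathcal M_M(\sigma)\widehat{\otimes}\Ind_{P(\mathbb A)}^{G(\mathbb A)}(\sigma_\lambda),
\]
with a decomposition of the underlying space $H_P=\widehat\bigoplus_\sigma H_{P,\sigma}$ that does not depend on $\lambda$. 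Taking direct integrals over $i\mathfrak a_M^*$ gives the same decomposition of $\int^\oplus I_P(\lambda)\,d\lambda$.

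Second, and this is the main point, analyze how $W(G,M)$ acts through the operators $U_w$ of \eqref{enormintwop}. By Lemma \ref{lM}(1), $M(w,\lambda)$ intertwines $I_P(\lambda)$ with $I_P(w\lambda)$. The operator $M(w,\lambda)$ is built from the constant-term integral \eqref{einteropdef} with $w_s\in G(\mathbb Q)$. Conjugation by $w_s$ carries $L^2_{\disc}(M(\mathbb Q)\backslash M(\mathbb A)^1)$ onto itself and sends the $\sigma$-isotypic part to the ${}^w\sigma$-isotypic part. From this I expect to show that
\[
M(w,\lambda)H_{P,\sigma}\subseteq H_{P,{}^w\sigma}.
\]
The key step is to reduce to the $M(\mathbb A)$-equivariance of the map $\phi\mapsto\phi_x$ after writing the operator fiberwise. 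This is where care is needed, since the operator is only defined by analytic continuation on $i\mathfrak a_M^*$ (Lemma \ref{lM}(3)). I would check the inclusion on a dense subspace of $K$-finite vectors with $\lambda$ in the convergence region, and then extend it by analyticity and unitarity.

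Third, group the $\sigma$ into $W(G,M)$-orbits. Orbits under $W(G,M)$ correspond exactly to associate classes $[M,\sigma]$ with the Levi fixed; for this I use the cited fact that associate parabolics correspond to conjugate Levis. Consider the subspace $\bigoplus_{\sigma'\in W\sigma}\int^\oplus H_{P,\sigma'}$. It is $U_w$-stable, and by Lemma \ref{lM}(2) the $U_w$ form a unitary representation of $W(G,M)$. Fix a representative $\sigma$. Restricting an invariant vector $F$ to its $\sigma$-component then gives a bijection
\[
\Bigl[\bigoplus_{\sigma'\in W\sigma}\int^\oplus H_{P,\sigma'}\Bigr]^{W(G,M)}\;\longrightarrow\;\Bigl[\int^\oplus H_{P,\sigma}\Bigr]^{W(G,M,\sigma)}.
\]
The inverse extends a $W(G,M,\sigma)$-invariant $F_\sigma$ by $F_{{}^w\sigma}(w\lambda)=M(w,\lambda)F_\sigma(\lambda)$, choosing coset representatives of $W(G,M)/W(G,M,\sigma)$. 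Well-definedness uses the cocycle relation Lemma \ref{lM}(2).

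Finally, compare norms. The factor $|W(G,M)|^{-1}$ in the norm of $\widehat L_{\mathcal P}$, together with the orbit–stabilizer count, gives an isometry up to a constant; rescale so that the normalization of the fixed-point space matches. The action of $G(\mathbb A)$ is fiberwise and commutes with this bijection. Summing over $[M]$ and over orbits yields \eqref{edecMdisc}.
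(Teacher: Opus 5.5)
Your proposal is correct and follows essentially the same route as the paper. Both arguments split $\int^\oplus H_P$ into the pieces $\int^\oplus H_{P,\sigma}$ and use that $U_w$ carries the $\sigma$-piece onto the ${}^w\sigma$-piece. They then identify $W(G,M)$-invariants on each orbit sum with $W(G,M,\sigma)$-invariants on a single piece, via projection with inverse $F=\sum_{r}U_r f$ over coset representatives (your $F_{{}^w\sigma}(w\lambda)=M(w,\lambda)F_\sigma(\lambda)$). The normalizations $1/|W|$ and $1/|W_\sigma|$ match by orbit--stabilizer. The only difference is that you sketch a direct check of $M(w,\lambda)H_{P,\sigma}\subseteq H_{P,{}^w\sigma}$ via $K$-finite vectors and analytic continuation, whereas the paper simply cites Arthur for this.
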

\begin{proof}
Given $\lambda \in i\mathfrak{a}_M^*$ and $\sigma\in \Pi_{\disc}(M^1)$, we have $M(w,\lambda)H_{P,\sigma}=H_{P,{}^w\sigma}$ by \eqref{einteropdef} and the realization of $H_P$ above (see also \cite[\S 15, page 86]{Art05}). 
We simply write $W(G,M), W(G,M,\sigma)$ as $W,W_{\sigma}$. 
Set
\begin{center}
 $\mathcal E
=
\int_{i\mathfrak a_M^{*}}^\oplus H_P\,d\lambda$, and $\mathcal E_\sigma
=
\int_{i\mathfrak a_M^{*}}^\oplus
H_{P,\sigma}\,d\lambda$. 
\end{center}
Then we know that
\begin{center}
    $\mathcal E=\widehat{\bigoplus}_\sigma \mathcal E_\sigma$. 
\end{center}
Then the action of $W$ on $\mathcal{E}$ by the normalized intertwining operators $U_w$ gives  $U_w\mathcal{E}_\sigma=\mathcal{E}_{{}^w\sigma}$.

Let $\mathcal O=W\cdot\sigma$ be a $W$-orbit in
$\Pi_{\disc}(M(\mathbb A)^1)$ and set
\[
\mathcal E_{\mathcal O}
=
\widehat{\bigoplus}_{\tau\in\mathcal O}\mathcal E_\tau, 
\]
which is $W$-invariant. 
Hence $\mathcal{E}^W=
\widehat{\bigoplus}_{\mathcal O}
\mathcal{E}_{\mathcal O}^W$. 
Consider the orthogonal projection
\begin{center}
    $\pr_{\mathcal{O},\sigma}\colon \mathcal{E}_{\mathcal{O}}\to \mathcal{E}_{\sigma}$. 
\end{center}
For $F\in\mathcal E_{\mathcal O}^W$, 
$f:=\pr_\sigma F$ is $W_\sigma$-invariant, since $U_w$ preserves $\mathcal{E}_\sigma$ for $w\in W_\sigma$.  
Thus the restriction gives the map
\begin{center}
$\Res_{\mathcal{O},\sigma}:=\pr_{\mathcal{O},\sigma}|_{\mathcal{E}_{\mathcal O}^W}\colon 
\mathcal{E}_{\mathcal O}^W
\longrightarrow
\mathcal E_\sigma^{W_\sigma}$. 
\end{center}

Conversely, we let $f\in\mathcal E_\sigma^{W_\sigma}$ and choose a set
$\mathcal R$ of representatives for the finite coset $W/W_\sigma$.  
Define
\begin{equation}\label{eextendfromstabilizer}
F
=
\sum_{r\in\mathcal R}U_r f.
\end{equation}
The terms in this sum belong to the mutually orthogonal subspaces
$\mathcal E_{{}^r\sigma}$.  The definition is independent of the
choice of representatives: if $r'=rh$ with $h\in W_\sigma$, then $U_{r'}f=U_rU_hf=U_rf$. 
Moreover, for $w\in W$, the left multiplication by $w$ permutes the
cosets in $W/W_\sigma$ and hence $U_wF=F$. 
Thus $F\in\mathcal E_{\mathcal O}^W$ and
$\pr_{\mathcal{O},\sigma} F=f$.  
This proves that
$\Res_{\mathcal{O},\sigma}$ is bijective.

We equip $F\in \mathcal{E}_{\mathcal{O}}^{W}$ and $f\in \mathcal{E}_{\sigma}^{W_{\sigma}}$ with the normalized norms
\begin{center}
    $\|F\|^2:=
\frac1{|W|}
\int_{i\mathfrak a_M^{*}}\|F(\lambda)\|^2\,d\lambda$ and 
$\|f\|^2
=
\frac1{|W_\sigma|}
\int_{i\mathfrak a_M^{*}}\|f(\lambda)\|^2\,d\lambda$. 
\end{center}
Then $\Res_{\mathcal{O},\sigma}$ is a $G(\mathbb{A})$-equivariant unitary. 
Since $H_{P,\sigma,\lambda}
\cong
\mathcal M_M(\sigma)\widehat\otimes
\Ind_{P(\mathbb A)}^{G(\mathbb A)}(\sigma_\lambda)$, we obtain
\begin{center}
    $\left[
\int_{i\mathfrak a_M^{*}}^\oplus
H_P\,d\lambda
\right]^{W(G,M)}
\cong
\widehat{\bigoplus}_{[\sigma]}
\left[
\int_{i\mathfrak a_M^{*}}^\oplus
\mathcal M_M(\sigma)\widehat\otimes
\Ind_{P(\mathbb A)}^{G(\mathbb A)}
(\sigma_\lambda)
\,d\lambda
\right]^{W(G,M,\sigma)}$
\end{center}
where $[\sigma]$ runs over the $W(G,M)$-orbits in $\Pi_{\disc}(M(\mathbb A)^1)$. 
Then the proposition follows from the decomposition \eqref{edecfull}. 
\end{proof}

\section{The Hilbert correspondence for the Langlands spectral decomposition}\label{sHilbertC}

In this section, we construct a Hilbert correspondence which realizes the Langlands spectral decomposition with respect to the automorphic spectrum of Levi subgroups. 

Let us start with a locally compact group $G$. 
Let $\mu_G$ be a left-invariant Haar measure on $G$. 
For every unitary representation $(\pi,H_{\pi})$ of $G$, we can associate a representation of the Banach $*$-algebra $L^1(G,\mu_G)$, denoted by $\pi$, which is defined by
\begin{center}
    $\pi(f)=\int_{G}f(g)\pi(g)d\mu_{G}(g)$
\end{center}
for $f\in L^1(G,\mu_G)$. 
We can check that $\pi(f^*)=\pi(f)^*$ so that
\begin{center}
    $\pi\colon L^1(G,\mu_G)\to B(H_{\pi})$ 
\end{center}
is a $*$-representation. 
We define $C^*_{\pi}(G)$ be the $C^*$-subalgebra of $B(H_{\pi})$ generated by $\pi(f)$ with $f\in L^1(G)$. 

Let $\pi_{\rm univ}$ be the {\it universal representation} of $G$, which is defined to be the direct sum of all cyclic unitary representations\footnote{A unitary representation $(\pi,H_{\pi})$ of $G$ is called {\it cyclic} if there exists $v\in H_{\pi}$ such that the linear span $\pi(G)v$ is dense in $H_{\pi}$. } 
of $G$. 
Thus we have a $*$-representation $\pi_{\rm univ}\colon L^1(G)\to L(H_{\rm univ})$ and we defined the {\it maximal norm} for $f\in L^1(G)$ by $\|f\|_{\max}=\|\pi_{\rm univ}(f)\|$. 
The {\it maximal $C^*$-algebra} of $G$, denoted by $C^*(G)$, is defined to be $C^*_{\pi_{\rm univ}}(G)$, which is the completion of $L^1(G)$ with respect to $\|\cdot\|_{\max}$.

\begin{definition}
The \textbf{automorphic $C^*$-algebra} of a reductive group over a number field $F$ is defined to be the image of the full automorphic representation
\begin{center}
    $R_G\colon C^{*}(G(\mathbb{A}))\to B(L^2(G(F)\backslash G(\mathbb{A})))$, 
\end{center}
which will be denoted by $C^*_{\mathcal{A}}(G(\mathbb{A}))$, or simply $C^*_{\mathcal{A}}(G)$. 
\end{definition} 
For a parabolic subgroup $P$ of $G$ and its Levi decomposition $P=M_{P}N_{P}=MN$, we consider the automorphic representation $R_M$ of $M(\mathbb{A})$ and define the $C^*$-algebra $C^*_{\mathcal{A}}(M(\mathbb{A}))$ similarly. 
 
For simplicity, we will also take $F=\mathbb{Q}$ from now on. 
The goal of this section is to construct a Hilbert correspondence $X_{P}^{\mathcal{A}}$ between $C^*_{\mathcal{A}}(G(\mathbb{A}))$ and $C^*_{\mathcal{A}}(M(\mathbb{A}))$ such that for any irreducible representation $(\sigma,H)$ in the discrete spectrum $L^2_{\rm disc}(M(\mathbb{Q})\backslash M(\mathbb{A}))$ of $L^2(M(\mathbb{Q})\backslash M(\mathbb{A}))$, we have
\begin{center}
$\Ind_{P(\mathbb{A})}^{G(\mathbb{A})}\sigma\cong X_{P}^{\mathcal{A}}\otimes_{C^*_{\mathcal{A}}(M(\mathbb{A}))}H$. 
\end{center}
The induction functor $\Ind_{C^{*}_{\mathcal{A}}(M(\mathbb{A}))}^{C^{*}_{\mathcal{A}}(G(\mathbb{A}))}(X_{P}^{\mathcal{A}},-)$ relates automorphic representations of $G$ to those of its Levi subgroups.

\subsection{Rieffel's induction module}

We briefly recall the necessary facts about group $C^*$-algebras, Hilbert $C^*$-modules and Rieffel's induction module (see \cite{Lance1995} and \cite{RWMoritaTrace1998}). 
Let $A,B$ be $C^*$-algebras. 

\begin{itemize}
    \item A {\it Hilbert $C^*$-module} $X$ over $B$, or simply a {\it Hilbert $B$-module $X$}, is a right $B$-module with a right $\mathbb{C}$-linear $B$-valued inner product
    \begin{center}
        $\langle~,~\rangle_B\colon X\times X\to B$
    \end{center}
    such that for any $x,y\in X$ and $b\in B$, we have
    \begin{enumerate}
        \item $\langle x,yb\rangle_B=\langle x,y\rangle_B\cdot b$; 
        \item $\langle x,y\rangle_B=\langle y,x\rangle_B^*$;
        \item $\langle x,x\rangle_B\geq 0$;
        \item $X$ is complete with respect to the norm $\|x\|^2=\|\langle x,x\rangle_B\|$
    \end{enumerate}

    \item The {\it $*$-algebra of adjointable operators} of a Hilbert $C^*$-module $X$ over $B$ is
    \begin{center}
        $\mathcal{L}_{B}(X)=\{T\colon X\to X\mid \exists T^*\colon X\to X, \langle Tx_1,x_2\rangle_{B}=\langle x_1,T^{*}x_2\rangle_{B}\}$,
    \end{center}
    which is a $C^*$-algebra with respect to the operator norm (see \cite[\S 1, page 8]{Lance1995}). 
    It is known that $\mathcal{L}_{B}(B)\cong M(B)$, the multiplier algebra of $B$ (see \cite[\S 2, page 15]{Lance1995}). 

    \item The {\it $*$-algebra of compact operators} of a Hilbert $C^*$-module $X$ over $B$, denoted by $\mathcal{K}_{B}(X)$, is the algebra generated by rank-one operators
    \begin{center}
        $T_{x_1,x_2}\colon x\mapsto x_1\langle x_2,x\rangle_B$
    \end{center}
    which is a $C^*$-algebra with respect to the operator norm (see \cite[\S 1, page 9]{Lance1995}). 
    Note that $\mathcal{K}_{B}(X)$ is not the algebra of usual compact operators unless $B\cong \mathbb{C}$. 
    We also know that $\mathcal{L}_{B}(X)=M(\mathcal{K}_{B}(X))$ (see \cite[Theorem 2.4]{Lance1995}). 

    \item A {\it Hilbert correspondence} for $(A,B)$ is a right Hilbert $C^*$-module $X$ over $B$ together with a $*$-homomorphism
    \begin{center}
        $\alpha_X\colon A\to \mathcal{L}_{B}(X)$. 
    \end{center}

    \item Given an $(A,B)$-Hilbert correspondence $X$ and a nondegenerate representation $\pi\colon B\to B(V_{\pi})$, we have an induced representation of $A$ on $X\otimes_{B}V_{\pi}$ via $X$ given by
    \begin{equation}\label{eCind}
        a(x\otimes v)=\alpha_{X}(a)x\otimes v
    \end{equation}
    for $a\in A$. 
    We denote this $A$-representation by $\Ind_{B}^{A}(X,\pi)$. 
    This induction gives us a functor
    \begin{center}
        $\Ind_{B}^{A}(X,-)\colon \Rep(B)\to \Rep(A)$ by $\pi\mapsto \Ind^{A}_{B}(X,\pi)$. 
    \end{center}

    \item 
    Let $\mathcal{E}$ be a Hilbert $A$-module and $X$ be an $(A,B)$-Hilbert correspondence. 
    \begin{enumerate}
        \item Let $\mathcal{E}\otimes ^{\rm alg} X$ be the algebraic tensor product. 
        \item Let $I$ be the subspace generated by the elements of the form $ea\otimes x-e\otimes \alpha_X(a)x$ with $e\in \mathcal{E}$ and $x\in X$. 
        \item The $B$-valued inner product is given by 
        \begin{center}
            $\langle e_1\otimes x_1,e_2\otimes x_2\rangle_B:=\langle  x_1,\alpha_{X}(\langle e_1,e_2\rangle_A) x_2\rangle_B$ 
        \end{center}
        for $e_1,e_2\in \mathcal{E}$ and $x_1,x_2\in X$. 
        \item The {\it interior tensor product} $\mathcal{E}\otimes_{A} X$ is the completion of $\bigl(\mathcal{E}\otimes ^{\rm alg} X\bigr)/I$ with respect to the $B$-valued inner product above (see \cite[Proposition 4.5]{Lance1995}). 
    \end{enumerate}
\item Let $\mathcal E$ be a Hilbert $A$-module and let $\alpha\in\Aut(A)$. 
An \emph{$\alpha$-twisted unitary automorphism}
of $\mathcal E$ is a complex-linear automorphism $U\colon \mathcal{E}\longrightarrow \mathcal{E}$
such that
\begin{equation}\label{etwistcover}
   U(\xi x)=U(\xi)\alpha(x),
\quad \langle U\xi,U\eta\rangle_A
=
\alpha\bigl(\langle\xi,\eta\rangle_A\bigr)
\end{equation}
for $\xi,\eta\in\mathcal{E}$ and $x\in A$. 
We also say that $U$ \emph{covers} the automorphism $\alpha$ of $A$.
\end{itemize}

The following lemma will be applied several times. See \cite[Proposition 4.7]{Lance1995} for the proof. 
\begin{lemma}\label{lcpttensor}
Let $X$ be a Hilbert $A$-module and $Y$ be a Hilbert correspondence for $(A,B)$. 
Suppose $A$ acts on $Y$ by compact operators, i.e., $\phi\colon A\to \mathcal{K}_{B}(Y)$ is the $*$-homomorphism.  
Then $\mathcal{K}_{A}(X)\subset \mathcal{K}_{B}(X\otimes_{A}Y)$. 
\end{lemma}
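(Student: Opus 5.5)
The statement is Lance's Proposition 4.7, so the plan is to reprove it directly from the definitions. First I would make precise how $\mathcal K_A(X)$ acts on $X\otimes_A Y$. For $T\in\mathcal L_A(X)$, the operator $T\otimes 1\colon e\otimes y\mapsto Te\otimes y$ is well defined on the algebraic quotient by $I$, because $T$ is $A$-linear. It is bounded, since
\[
\langle Te\otimes y, Te\otimes y\rangle_B=\langle y,\phi(\langle Te,Te\rangle_A)y\rangle_B\le \|T\|^2\langle y,\phi(\langle e,e\rangle_A)y\rangle_B,
\]
which follows from $\langle Te,Te\rangle_A\le\|T\|^2\langle e,e\rangle_A$ and the positivity of $\phi$. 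The same inequality applied to finite sums (the standard matrix form) gives $\|T\otimes 1\|\le\|T\|$. The adjoint of $T\otimes 1$ is $T^*\otimes 1$. So $T\mapsto T\otimes1$ is a $*$-homomorphism $\mathcal L_A(X)\to\mathcal L_B(X\otimes_AY)$, and the claimed inclusion means that this map carries $\mathcal K_A(X)$ into $\mathcal K_B(X\otimes_AY)$.

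Since $\mathcal K_B(X\otimes_AY)$ is a closed ideal and the map is continuous, it suffices to treat a rank-one operator $T_{x_1,x_2}$. For $e\otimes y$ we compute
\[
(T_{x_1,x_2}\otimes1)(e\otimes y)=x_1\langle x_2,e\rangle_A\otimes y=x_1\otimes\phi(\langle x_2,e\rangle_A)y.
\]
Next I would use the Cohen--Hewitt factorization $x_i=x_i'a_i$ with $x_i'\in X$ and $a_i\in A$; one can even take $a_i=\langle x_i,x_i\rangle_A^{1/4}$ by the standard lemma. This gives
\[
\phi(\langle x_2,e\rangle_A)=\phi(a_2)^*\,\phi(\langle x_2',e\rangle_A),
\]
so that
\[
(T_{x_1,x_2}\otimes1)(e\otimes y)=x_1'\otimes\phi(a_1)\phi(a_2)^*\phi(\langle x_2',e\rangle_A)y.
\]
By hypothesis $\phi(a_1)\phi(a_2)^*\in\mathcal K_B(Y)$, so it is a norm limit of finite sums $\sum_j\theta_{y_j,z_j}$. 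For each rank-one piece,
\[
x_1'\otimes y_j\langle z_j,\phi(\langle x_2',e\rangle_A)y\rangle_B=\theta_{x_1'\otimes y_j,\;x_2'\otimes z_j}(e\otimes y),
\]
by the definition of the interior inner product. Hence $T_{x_1,x_2}\otimes 1$ is a norm limit of finite-rank operators $\sum_j\theta_{x_1'\otimes y_j,\,x_2'\otimes z_j}$. The convergence is in operator norm because the maps $K\mapsto \theta_{x_1'}\,(1\otimes K)\,\theta_{x_2'}^*$-type expressions are bounded with norm at most $\|x_1'\|\|x_2'\|\|K\|$.

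The main obstacle is this last estimate. It requires showing that $S\mapsto L_{x_1'}SL_{x_2'}^*$ is norm-bounded, where $L_x\colon Y\to X\otimes_AY$ is defined by $y\mapsto x\otimes y$. I would verify that $L_x$ is adjointable, with adjoint $L_x^*(e\otimes y)=\phi(\langle x,e\rangle_A)y$, and that $\|L_x\|\le\|x\|$. Then $T_{x_1,x_2}\otimes1=L_{x_1'}\,\phi(a_1a_2^*)\,L_{x_2'}^*$, and also $L_{x}\theta_{y,z}L_{x'}^*=\theta_{x\otimes y,\,x'\otimes z}$. Together these identities finish the argument.
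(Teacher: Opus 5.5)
Your proposal is correct and is essentially the standard proof of Lance's Proposition~4.7, which is all the paper invokes here. That proof factors $x_1=x_1'a_1$, writes $T_{x_1,x_2}\otimes 1=L_{x_1'}\,\phi(a_1)\,L_{x_2}^*$, and concludes from $L_x\theta_{y,z}L_{x'}^*=\theta_{x\otimes y,\,x'\otimes z}$ and $\|L_x\|\le\|x\|$; your additional factorization of $x_2$ is harmless but not needed.
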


Let $G$ be a locally compact group and let $H$ be a closed subgroup of $G$.
Let $C^*(G)$ and $C^*(H)$ denote the full group $C^*$-algebras of $G$ and $H$,
respectively.  
Rieffel \cite{Rieffel1974} constructed a 
$(C^*(G),C^*(H))$-Hilbert correspondence, now usually called the {\it Rieffel  induction module}, which we denote by $\mathcal{E}_H^G$, or simply $\mathcal{E}$. 
Rieffel's induction module is the Hilbert $C^*$-module over $C^*(H)$ obtained by completing the space
$C_c(G)$ with respect to a $C^*(H)$-valued inner product. 
The $C^*(H)$-valued inner product is defined by
\begin{center}$
    \langle f_1,f_2\rangle_{C^*(H)}(h)
    =
    \int_G
    \overline{f_1(x)}\,f_2(xh)\,dx 
$\end{center}
for $f_1,f_2\in C_c(G)$, where the Haar measures are chosen compatibly and the usual modular-function corrections are included in the non-unimodular case. 
The right action of
$C_c(H)$ on $C_c(G)$ is given by
\begin{center}$
    (f\cdot a)(x)
    =
    \int_H f(xh)a(h^{-1})\,dh,
$\end{center}
for $f\in C_c(G)$, $a\in C_c(H)$, and $x\in G$.
The left action of $C^*(G)$ on $C_c(G)$ is given by convolution:
\begin{center}$
    (b\cdot f)(x)
    =
    \int_G b(y)f(y^{-1}x)\,dy ,
$\end{center}
for $b\in C_c(G)$.  This action extends to adjointable operators on $\mathcal{E}_H^G$:
\begin{center}$
    \alpha_{\mathcal{E}}\colon C^*(G)\longrightarrow \mathcal{L}_{C^*(H)}(\mathcal{E}_H^G). 
$\end{center}
Thus $\mathcal{E}_H^G$ is a
$\bigl(C^*(G),C^*(H)\bigr)$-Hilbert correspondence.

Given a nondegenerate representation 
$\pi\colon H\longrightarrow B(V_\pi)$, 
Rieffel's induction module defines a functor (see \eqref{eCind})
\begin{center}
    $\Ind_{C^*(H)}^{C^*(G)}(\mathcal{E},- )\colon \Rep(C^*(H))\to \Rep(C^*(G))$,
\end{center}
or simply $\Ind_{H}^{G}(\mathcal{E},-)$, between unitary representations $\Rep_{u}(H)$ and $\Rep_{u}(G)$. 

We construct the Hilbert correspondence for automorphic representations.  
Let $X_{P}^{G}$ be Rieffel's induction module, which is a $\bigl(C^{*}(G(\mathbb{A})),C^{*}(P(\mathbb{A}))\bigr)$-Hilbert correspondence. 
Let $\varepsilon_P\colon P(\mathbb{A})\longrightarrow
M(\mathbb{A})=P(\mathbb{A})/N(\mathbb{A})$ be the quotient map.
We define two Hilbert correspondences. 
\begin{enumerate}
    \item Consider $C^*(M(\mathbb{A}))$ as a $(C^*(P(\mathbb{A})),C^*(M(\mathbb{A})))$ correspondence with the left $C^*(P(\mathbb{A}))$-action given by $\varepsilon_P$. 
    We let
    \begin{equation}\label{eXPMG}
        X_{P,M}^{G}:=X_{P}^{G}
\otimes_{C^{*}(P(\mathbb{A}))}
C^{*}(M(\mathbb{A})). 
    \end{equation}
For every unitary representation $(\sigma,H_{\sigma})$ of $M(\mathbb{A})$,
we obtain
\begin{equation*}
\begin{aligned}
    X_{P,M}^{G}
\otimes_{C^{*}(M(\mathbb{A}))}
H_{\sigma}
&\cong
X_{P(\mathbb{A})}^{G(\mathbb{A})}
\otimes_{C^{*}(P(\mathbb{A}))}
C^{*}(M(\mathbb{A}))
\otimes_{C^{*}(M(\mathbb{A}))}
H_{\sigma}
\\
&\cong
X_{P(\mathbb{A})}^{G(\mathbb{A})}
\otimes_{C^{*}(P(\mathbb{A}))}
H_{\sigma\circ q}
\cong
\Ind_{P(\mathbb{A})}^{G(\mathbb{A})}\sigma.
\end{aligned}
\end{equation*}
This realizes the parabolic inductions from $P(\mathbb{A})$ to $G(\mathbb{A})$. 

\item
For the automorphic representation $R_G$ of $G(\mathbb{A})$ on $L^2(G(\mathbb{Q})\backslash G(\mathbb{A}))$ together with 
$R_G\colon C^{*}(G(\mathbb{A}))\to B(L^2(G(\mathbb{Q})\backslash G(\mathbb{A})))$, 
we let $I_{G}=\ker R_{G}$ and consider the quotient map
\begin{center}
    $\varepsilon_{G,\mathcal{A}}\colon C^{*}(G(\mathbb{A})) \to C^{*}_{\mathcal{A}}(G(\mathbb{A}))\cong C^*(G(\mathbb{A}))/I_{G}$. 
\end{center}
Consider the Levi subgroup $M$ and the quotient map
\begin{center}
    $\varepsilon_{M,\mathcal{A}}\colon C^{*}(M(\mathbb{A})) \to C^{*}_{\mathcal{A}}(M(\mathbb{A}))\cong C^*(M(\mathbb{A}))/I_{M}$. 
\end{center}
We know that $C^{*}_{\mathcal{A}}(M(\mathbb{A}))$ is a $(C^{*}(M(\mathbb{A})),C^{*}_{\mathcal{A}}(M(\mathbb{A})))$ correspondence with the left action of $C^{*}(M(\mathbb{A}))$ given by $\varepsilon_{M,\mathcal{A}}$. 
We let
\begin{equation}\label{eXPA}
    X_{P}^{\mathcal{A}}:=
X_{P,M}^{G}
\otimes_{C^*(M(\mathbb{A}))}
C^*_{\mathcal{A}}(M(\mathbb{A})). 
\end{equation}
It is known that $X_{P}^{\mathcal{A}}$ is a $(C^{*}(G(\mathbb{A})),C^{*}_{\mathcal{A}}(M(\mathbb{A})))$ correspondence, which is equipped with the morphism
\begin{center}
    $\Phi_{P}^{\mathcal{A}}:
C^{*}(G(\mathbb{A}))
\longrightarrow
\mathcal{L}_{C^{*}_{\mathcal{A}}(M(\mathbb{A}))}(X_{P}^{\mathcal{A}})$. 
\end{center}
\end{enumerate}

\subsection{The $(C^{*}_{\mathcal{A}}(G(\mathbb{A})),C^{*}_{\mathcal{A}}(M(\mathbb{A})))$-Hilbert correspondence}

\begin{proposition}\label{pGviaGA}
$X_{P}^{\mathcal{A}}$ is a $(C^{*}_{\mathcal{A}}(G(\mathbb{A})),C^{*}_{\mathcal{A}}(M(\mathbb{A})))$-correspondence, i.e.,
the map $\Phi_{P}^{\mathcal{A}}$ factors through $\varepsilon_{G,\mathcal{A}}$.    
\end{proposition}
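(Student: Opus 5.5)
We need $\ker R_G \subseteq \ker \Phi_P^{\mathcal A}$. By the standard Hilbert-module fact, for $a\in C^*(G(\mathbb A))$ the operator $\Phi_P^{\mathcal A}(a)$ vanishes on $X_P^{\mathcal A}$ if and only if $\Phi_P^{\mathcal A}(a)\otimes 1$ vanishes on $X_P^{\mathcal A}\otimes_{C^*_{\mathcal A}(M(\mathbb A))} V$ for a faithful representation $V$ of $C^*_{\mathcal A}(M(\mathbb A))$. The point is that if $T\otimes 1=0$ on $X\otimes_B V$ with $V$ faithful, then $\langle Tx,Tx\rangle_B$ acts as zero on $V$, hence $\langle Tx,Tx\rangle_B=0$. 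So I will take $V=L^2(M(\mathbb Q)\backslash M(\mathbb A))$ with the faithful representation induced by $R_M$. Then, using associativity of interior tensor products and the computation after \eqref{eXPMG}, I get
\[
X_P^{\mathcal A}\otimes_{C^*_{\mathcal A}(M(\mathbb A))} L^2(M(\mathbb Q)\backslash M(\mathbb A))\cong X_{P,M}^G\otimes_{C^*(M(\mathbb A))} L^2(M(\mathbb Q)\backslash M(\mathbb A))\cong \Ind_{P(\mathbb A)}^{G(\mathbb A)} R_M ,
\]
with the left $C^*(G(\mathbb A))$-action being the integrated form of the induced representation. The statement therefore reduces to showing that $\ker R_G\subseteq \ker\bigl(\Ind_{P(\mathbb A)}^{G(\mathbb A)}R_M\bigr)$, i.e.\ that $\Ind_{P(\mathbb A)}^{G(\mathbb A)}R_M$ is weakly contained in $R_G$.

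For the weak containment I will identify $\Ind_{P(\mathbb A)}^{G(\mathbb A)} R_M$ (with $R_M$ inflated to $P(\mathbb A)$ through $\varepsilon_P$) with the right regular representation on $L^2(M(\mathbb Q)N(\mathbb A)\backslash G(\mathbb A))$, by induction in stages. Then I will show that $L^2(P'\backslash G(\mathbb A))$, where $P'=M(\mathbb Q)N(\mathbb A)$, is weakly contained in $L^2(G(\mathbb Q)\backslash G(\mathbb A))$. A concrete route is the constant-term and Eisenstein-series theory. Arthur's spectral decomposition \eqref{eautodecP}, \eqref{edecfull} realizes $L^2_{[M]}$ as a direct integral of $I_P(\lambda)$ over $i\mathfrak a_M^*$. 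Meanwhile $L^2(M(\mathbb Q)N(\mathbb A)\backslash G(\mathbb A))$ decomposes, via \eqref{eRGRGlambda} applied to $M$, as the direct integral over $\lambda$ of $\Ind R_{M,\lambda}$. So it suffices to show that every irreducible constituent of $\Ind_{P(\mathbb A)}^{G(\mathbb A)} R_{M,\lambda}$ is automorphic. For that I will use Theorem \ref{tLldspara}: each irreducible representation weakly contained in $R_M$ is, by the same theorem for $M$, a subquotient of a representation induced from a cuspidal one. By induction in stages, any irreducible subquotient of the further induction to $G$ is a subquotient of a representation cuspidally induced from a Levi of $G$, hence automorphic. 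Since the support of $\Ind R_M$ in the unitary dual is the closure of the set of irreducibles occurring in its direct integral decomposition, and $\Pi_{\mathcal A}(G)$ is closed, this support lies in $\Pi_{\mathcal A}(G)$. By the standard dictionary between weak containment and hull–kernel closure, this gives $\ker R_G\subseteq\ker \Ind R_M$.

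The main obstacle is this weak-containment step. One difficulty is passing from irreducible subquotients to irreducibles occurring in a direct integral decomposition. Another is the continuity of induction in the Fell topology, together with the fact that Theorem \ref{tLldspara} is stated for subquotients rather than weak containment. I would first try the cleaner alternative of Fell's continuity of induction: $R_M$ is weakly equivalent to the family of its weakly contained irreducibles, and induction preserves weak containment. One could also try to show directly that $\Ind_{P(\mathbb A)}^{G(\mathbb A)} R_M$ is weakly contained in $R_G$ via pseudo-Eisenstein series. This would use the fact that the constant-term map and its adjoint, the pseudo-Eisenstein series, intertwine $R_G$ with the regular representation on $L^2(M(\mathbb Q)N(\mathbb A)\backslash G(\mathbb A))$ on a dense, spectrally large subspace. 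If that direct route is too delicate, I will fall back on the subquotient argument above, appealing to the fact that for type I groups such as $G(\mathbb A)$ the subquotients of unitary induced representations lie in the support.
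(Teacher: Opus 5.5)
Your reduction is exactly the paper's. Faithfulness of $R_M$ on $C^*_{\mathcal A}(M(\mathbb A))$ together with $X_P^{\mathcal A}\otimes_{C^*_{\mathcal A}(M(\mathbb A))}L^2(M(\mathbb Q)\backslash M(\mathbb A))\cong\Ind_{P(\mathbb A)}^{G(\mathbb A)}R_M$ gives $\ker\Phi_P^{\mathcal A}=\ker(\Ind R_M)$, and your $\langle Tx,Tx\rangle_B$ argument justifies this correctly. Your Fell-continuity fallback (reduce to a single $\sigma\prec R_M$) is also how the paper continues.

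The gap is in how you then prove $\Ind_{P(\mathbb A)}^{G(\mathbb A)}R_M\prec R_G$. Your chain uses Theorem~\ref{tLldspara} in both directions:
\begin{itemize}
\item ``only if'' for $M$, to write $\sigma\prec R_M$ as a subquotient of $\Ind_Q^M\delta$ with $\delta$ cuspidal;
\item ``if'' for $G$, to call every subquotient of $\Ind_{QN_P}^G\delta$ automorphic.
\end{itemize}
When ``automorphic'' means weak containment in $R_G$, these two directions do not hold for any common class of cuspidal data. To reach residual $\sigma$ (e.g.\ the trivial representation of $M(\mathbb A)^1$) you must allow non-unitary twists $\delta\otimes e^{\nu}$. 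That is Langlands' statement in Borel--Jacquet, where ``automorphic'' means a constituent of the space of automorphic forms. For such data the ``if'' direction fails in the weak-containment sense. Non-unitary constituents are never weakly contained in $R_G$. Even unitary ones can fail: the global complementary series $\Ind_{B(\mathbb A)}^{\GL_2(\mathbb A)}(|\cdot|^{s}\otimes|\cdot|^{-s})$, with $s<\tfrac12$ close to $\tfrac12$, is cuspidally induced. Yet by the known bounds towards Ramanujan its Satake parameters $p^{\pm s}$ stay away from those of every constituent of $L^2(\GL_2(\mathbb Q)\backslash\GL_2(\mathbb A))$, so it is not in the support of $R_{\GL_2}$. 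If instead you require $\delta$ to be unitary, ``if'' holds but ``only if'' fails for residual $\sigma$. So the step ``hence automorphic'' is not justified. The Fell-continuity reduction only brings you to a single $\sigma$, and the type~I remark only concerns subquotients versus direct-integral constituents, so neither supplies the missing step.

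The missing idea is to land in the spectral decomposition rather than in the converse of Theorem~\ref{tLldspara}. You invoked \eqref{eautodecP} for $G$; apply it, in the form \eqref{edecfull}, to $M$ as well. This handles the continuous part of $R_{M,\lambda}$, which is what made you switch to Theorem~\ref{tLldspara}. Concretely, $R_M$ is a direct sum, over classes of standard Levi subgroups $L\subseteq M$ with parabolic $Q\subseteq M$, of Weyl-invariant parts of direct integrals over unitary $\mu\in i\mathfrak a_L^*$ of $\Ind_Q^M(R_{L,\disc,\mu})$. Unitary induction commutes with direct integrals and is transitive. Hence $\Ind_P^G R_M$ is a direct sum of subrepresentations of $\int^\oplus_{i\mathfrak a_L^*}\Ind_{QN_P}^G(R_{L,\disc,\mu})\,d\mu$, where $QN_P$ is standard with Levi $L$. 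By \eqref{edecfull} for $G$, this direct integral is a multiple of the summand of $R_G$ attached to the class of $L$. The $W(G,L)$-invariance there only identifies fibres along $W$-orbits, and these are unitarily equivalent via $M(w,\mu)$. This gives $\Ind_P^G R_M\prec R_G$ directly, with no need for irreducibles, closures or pseudo-Eisenstein series. It is what the paper's cases (a) ($L=M$) and (b) ($L\subsetneq M$) amount to. Note that in (b) one needs discrete, not merely cuspidal, data on $L$ to keep the twist unitary.
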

\begin{proof}
Since $R_M$ gives a faithful 
representation of $C^{*}_{\mathcal{A}}(M(\mathbb{A}))$, the induced Hilbert-space representation
satisfies
\begin{center}$
X_{P}^{\mathcal{A}}
\otimes_{C^{*}_{\mathcal{A}}(M(\mathbb{A}))}
L^{2}(M(\mathbb{Q})\backslash M(\mathbb{A}))
\cong
\Ind_{P}^{G}R_M.
$\end{center}
It follows that
\begin{center}$
\ker\Phi_{P}^{\mathcal{A}}
=
\ker\left(
\Ind_{P(\mathbb{A})}^{G(\mathbb{A})}
R_M
\right).
$\end{center}
Therefore the left action factors through
$C_{\mathcal{A}}^{*}(G)$ if and only if
\begin{equation}\label{eRMRG}
  \Ind_{P(\mathbb{A})}^{G(\mathbb{A})}
R_M
\prec
R_G,  
\end{equation}
where $\pi \prec \rho$ means that $\pi$ is  weakly contained in $\rho$.  

Note that $\ker R_{M}=\cap_{\sigma\in \Pi_{\mathcal{A}}(M)}\ker \sigma$ and parabolic induction preserves weak containment. 
We may consider a single automorphic representation. 
\begin{enumerate}
    \item For an irreducible automorphic representation $\sigma$ appearing in the discrete spectrum of $R_M$, we have $\Ind_{P(\mathbb{A})}^{G(\mathbb{A})}\sigma\prec R_G$ by \eqref{eautodecP}. 

    \item If $\sigma$ lies in the continuous spectrum (the direct integral) of $R_M$, we apply \eqref{eautodecP} to $M$. 
By Theorem \ref{tLldspara}, 
there exists a parabolic subgroup $Q$ of $M$ with its Levi decomposition $Q=M_{Q}N_{Q}$ such that
\begin{center}
    $\sigma \prec \Ind_{Q(\mathbb{A})}^{M(\mathbb{A})}
\bigl(\delta_{\lambda}\bigr)$
\end{center}
for some cuspidal automorphic representation $\delta$ of $M_{Q}(\mathbb{A})$ and some unitary character $e^\lambda$ of $A_{M_{Q}}(\mathbb{R})^0$. 
Moreover, $QN_P$ is a parabolic subgroup of $G$ with Levi factor $M_Q$. 
By the transitivity of induced representations, we obtain
\begin{center}
$\Ind_{P(\mathbb{A})}^{G(\mathbb{A})}
\left(
\Ind_{Q(\mathbb{A})}^{M(\mathbb{A})}
\delta_{\lambda}
\right)
\cong
\Ind_{(QN_P)(\mathbb{A})}^{G(\mathbb{A})}
\delta_{\lambda}$, 
\end{center}
which is weakly contained in $R_G$. 
\end{enumerate}
Thus, for each $\sigma\prec R_M$, we have $\Ind_{P(\mathbb{A})}^{G(\mathbb{A})}\sigma\prec R_G$ which proves \eqref{eRMRG}. 
\end{proof}

\begin{corollary}\label{cPindtensor}
Let $(\sigma,H_{\sigma})$ be an automorphic representation of $M(\mathbb{A})$. 
We have
\begin{center}
$\Ind_{P(\mathbb{A})}^{G(\mathbb{A})}H_{\sigma}\cong X_{P}^{\mathcal{A}}\otimes_{C^*_{\mathcal{A}}(M(\mathbb{A}))}H_{\sigma}$
\end{center}
as automorphic representations of $G(\mathbb{A})$. 
\end{corollary}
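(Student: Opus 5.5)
We first unwind the definition of $X_P^{\mathcal A}$ in \eqref{eXPA} and use associativity of the interior tensor product:
\[
X_{P}^{\mathcal{A}}\otimes_{C^*_{\mathcal{A}}(M(\mathbb{A}))}H_{\sigma}
\cong
X_{P,M}^{G}\otimes_{C^*(M(\mathbb{A}))}\Bigl(C^*_{\mathcal{A}}(M(\mathbb{A}))\otimes_{C^*_{\mathcal{A}}(M(\mathbb{A}))}H_\sigma\Bigr).
\]
The essential preliminary observation is that $\sigma$, being automorphic, is weakly contained in $R_M$. Hence the representation $\sigma\colon C^*(M(\mathbb A))\to B(H_\sigma)$ satisfies $\ker\sigma\supseteq I_M=\ker R_M$. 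It therefore factors through $\varepsilon_{M,\mathcal A}$ and defines a nondegenerate representation of $C^*_{\mathcal A}(M(\mathbb A))$. The right-hand side of the tensor product above thus makes sense.

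Next, for a nondegenerate representation $\tau$ of a $C^*$-algebra $B$, the map $b\otimes v\mapsto \tau(b)v$ gives a unitary $B\otimes_B H_\tau\cong H_\tau$. Applying this with $B=C^*_{\mathcal A}(M(\mathbb A))$, we observe that the resulting left $C^*(M(\mathbb A))$-action on $H_\sigma$ is $\sigma\circ\varepsilon_{M,\mathcal A}=\sigma$. The expression therefore reduces to $X_{P,M}^G\otimes_{C^*(M(\mathbb A))}H_\sigma$. By the computation following \eqref{eXPMG} (inflation along $\varepsilon_P$, followed by Rieffel's imprimitivity identification $X_P^G\otimes_{C^*(P(\mathbb A))}H_{\sigma\circ\varepsilon_P}\cong\Ind_{P(\mathbb A)}^{G(\mathbb A)}\sigma$), this is unitarily equivalent to $\Ind_{P(\mathbb A)}^{G(\mathbb A)}\sigma$ as a $C^*(G(\mathbb A))$-representation.

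Finally, we check that the isomorphism is one of $C^*_{\mathcal A}(G(\mathbb A))$-representations. By Proposition \ref{pGviaGA}, the left action $\Phi_P^{\mathcal A}$ factors through $\varepsilon_{G,\mathcal A}$. The induced representation on $X_P^{\mathcal A}\otimes H_\sigma$ is therefore a representation of $C^*_{\mathcal A}(G(\mathbb A))$. Via the unitary equivalence, this means $\Ind_{P(\mathbb A)}^{G(\mathbb A)}\sigma\prec R_G$, which is also the content of \eqref{eRMRG} restricted to $\sigma$. All unitaries constructed above commute with the left $C^*(G(\mathbb A))$-action, since each tensor identification is linear in the left factor. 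Hence they intertwine the $C^*_{\mathcal A}(G(\mathbb A))$-actions.

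The main obstacle is the first step: justifying that an arbitrary automorphic $\sigma$, not only a discrete one, gives a representation of the quotient $C^*_{\mathcal A}(M(\mathbb A))$. This is exactly the definition of weak containment, $\ker\sigma\supseteq\ker R_M$, so it should be immediate once stated. The only remaining care is that "as automorphic representations" must be read as "as representations weakly contained in $R_G$", since $\Ind\sigma$ need not be irreducible.
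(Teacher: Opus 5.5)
Your proposal is correct and follows the same route the paper takes implicitly: the paper gives no separate proof, treating the corollary as immediate from the tensor-product computation after \eqref{eXPMG} together with Proposition~\ref{pGviaGA}, which ensures the left action factors through $C^*_{\mathcal A}(G(\mathbb A))$. Your explicit check that an automorphic $\sigma$ descends to $C^*_{\mathcal A}(M(\mathbb A))$ (since $\ker\sigma\supseteq\ker R_M$) fills in a step the paper leaves unstated, as does your remark that ``as automorphic representations'' should be read as ``weakly contained in $R_G$'', because $\Ind\sigma$ need not be irreducible.
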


\begin{example}\label{exmptransind}
Assume $G=\GL(5)$ and take the parabolic subgroup $P=M_{P}N_{P}$ as follows.
\begin{center}
    $P=\left( \begin{array}{ccccc}
* & * & * & * & * \\
* & * & * & * & * \\
* & * & * & * & * \\
0 & 0 & 0 & * & * \\
0 & 0 & 0 & * & * 
\end{array} \right),
M_P=\left( \begin{array}{ccccc}
* & * & * & 0 & 0 \\
* & * & * & 0 & 0 \\
* & * & * & 0 & 0 \\
0 & 0 & 0 & * & * \\
0 & 0 & 0 & * & * 
\end{array} \right)\cong \GL(3)\times \GL(2)$. 
\end{center}
We take the following parabolic subgroup $Q$ with its Levi decomposition $Q=M_Q N_Q$:
\begin{center}
    $Q=\left( \begin{array}{ccccc}
* & * & * & 0 & 0 \\
* & * & * & 0 & 0 \\
0 & 0 & * & 0 & 0 \\
0 & 0 & 0 & * & * \\
0 & 0 & 0 & * & * 
\end{array} \right),
M_Q=\left( \begin{array}{ccccc}
* & * & 0 & 0 & 0 \\
* & * & 0 & 0 & 0 \\
0 & 0 & * & 0 & 0 \\
0 & 0 & 0 & * & * \\
0 & 0 & 0 & * & * 
\end{array} \right)\cong \GL(2)\times \GL(1)\times \GL(2)$. 
\end{center}
Then $QN_P$ is the following parabolic subgroup
\begin{center}
    $QN_P=\left( \begin{array}{ccccc}
* & * & * & * & * \\
* & * & * & * & * \\
0 & 0 & * & * & * \\
0 & 0 & 0 & * & * \\
0 & 0 & 0 & * & * 
\end{array} \right)
$,
\end{center}
whose Levi factor is $M_Q$. 
\end{example}

We end this section with an explicit description of $X_{P}^{\mathcal{A}}$.

\begin{remark}\label{rXidX}
Recall that the decomposition $P=M_PN_P$ gives a quotient homomorphism
\begin{center}
$
\varepsilon_{P}\colon P(\mathbb{A})\longrightarrow M(\mathbb{A}),
\qquad
\varepsilon_{P}(mn)=m.
$
\end{center}
Let $\delta_{P}$ be the modulus character of $P(\mathbb{A})$, given by
\begin{center}
$
\delta_{P}(p)
=
\left|
\det\left(
\operatorname{Ad}(p)|_{\mathfrak{n}_{P}}
\right)
\right|_{\mathbb{A}},
$
\end{center}
where $\mathfrak{n}_{P}$ stands for the Lie algebra of $N_P$. 
We let $U_M$ denote the canonical map from $M(\mathbb{A})$ to the unitary group of the multiplier algebra of $C^{*}_{\mathcal{A}}(M(\mathbb{A}))$, which is given by the composition
\begin{center}
    $M(\mathbb{A})\to U(M(C^{*}(M(\mathbb{A}))))\to U(M(C^{*}_{\mathcal{A}}(M(\mathbb{A}))))$. 
\end{center}
Here the first map is the canonical universal unitary representation and the second one comes from the quotient map $C^{*}(M(\mathbb{A}))\to C^{*}_{\mathcal{A}}(M(\mathbb{A}))$. 
Define $X_{P}^{\mathcal{A},0}$ to be the space of continuous functions on $G(\mathbb{A})$ such that
\begin{itemize}
    \item $\supp{F}$ is compact modulo $P(\mathbb{A})$,
    \item $F(pg)=\delta_{P}(p)^{1/2}
U_{M}(\varepsilon_{P}(p))F(g)$.  
\end{itemize}
The right $C^{*}_{\mathcal{A}}(M(\mathbb{A}))$-module structure is defined by
\begin{center}
$
(F\cdot b)(g)=F(g)b 
$ 
\end{center}
for $b\in C^{*}_{\mathcal{A}}(M(\mathbb{A}))$. 
The group $G(\mathbb{A})$ acts on $X_{P}^{\mathcal{A},0}$ by right translation:
\begin{center}
$
\bigl(R(g_{0})F\bigr)(g)
=
F(gg_{0})$
\end{center}
for $g_0,g\in G(\mathbb{A})$. 
This action extends to an action of $C^{*}(G(\mathbb{A}))$ by adjointable operators on $X_{P}^{\mathcal{A}}$. 

Let $K\subset G(\mathbb{A})$ be a maximal compact subgroup such that
\begin{center}
$
G(\mathbb{A})=P(\mathbb{A})K,
$
\end{center}
and normalize the Haar measure on $K$ to have total mass one. For $F_{1},F_{2}\in X_{P}^{\mathcal{A},0}$, define
\begin{center}
$
\langle F_{1},F_{2}\rangle_{C_{\mathcal{A}}^{*}(M)}
=
\int_{K}
F_{1}(k)^{*}F_{2}(k),dk.
$
\end{center}
The completion of $X_{P}^{\mathcal{A},0}$ with respect to the norm
\begin{center}
$
\|F\|
=
\|
\langle F,F\rangle_{C_{\mathcal{A}}^{*}(M)}
\|^{1/2}
$
\end{center}
is canonically isomorphic to
\begin{center}
$
X_{P}^{\mathcal{A}}=
X_{P(\mathbb{A})}^{G(\mathbb{A})}
\otimes_{C^{*}(P(\mathbb{A}))}
C^{*}_{\mathcal{A}}(M(\mathbb{A})),
$
\end{center}
where $C^*(P(\mathbb{A}))$ acts on $C^{*}_{\mathcal{A}}(M(\mathbb{A}))$ through the map $C^*(P(\mathbb{A}))\to C^{*}(M(\mathbb{A}))\to C^{*}_{\mathcal{A}}(M(\mathbb{A}))$. 

\end{remark}


\section{The compact actions of $C^*_{\mathcal{A}}(G(\mathbb{A}))$}
\label{scptCAG}

Throughout this section, we abbreviate $G(\mathbb{A}),P(\mathbb{A}),M_{P}(\mathbb{A})$ and $M_{P}(\mathbb{A})^{1}$ by $G,P,M$ and $M^1$, respectively. 
Let $X_{P}^{\mathcal{A}}$ be the $(C^{*}_{\mathcal{A}}(G),C^{*}_{\mathcal{A}}(M))$-correspondence constructed in \eqref{eXPA} above. 
For a Hilbert correspondence  $\mathcal{H}$ from $C^*_{\mathcal{A}}(M)$ to a $C^*$-algebra $C$,  
we define the Hilbert module over $C$ to be
\begin{center}
    $\Ind_{P}^{G}\mathcal{H}:=\Ind_{P}^{G}(X_{P}^{\mathcal{A}},\mathcal{H})=X_{P}^{\mathcal{A}}\otimes_{C^*_{\mathcal{A}}(M)}\mathcal{H}$,
\end{center}
which is a $(C^*_{\mathcal{A}}(G(\mathbb{A})),C)$-Hilbert correspondence.


Consider the action of $G$ on the space $G/P$ by left translation. 
This gives a dynamical system $(G,C(G/P),\alpha)$ with $\alpha_g(f)(x)=f(g^{-1}x)$.  
The full crossed product $C(G/P)\rtimes G$
is the completion of the convolution $*$-algebra $C_c(G,C(G/P))$ 
with respect to the universal $C^*$-norm, where the product is defined by
\begin{center}
   $(F_1*F_2)(g)
=
\int_G F_1(h)\alpha_h(F_2(h^{-1}g))\,dh$, 
\end{center}
where $F_1,F_2\colon G\to C(G/P)$ belong to $C_c(G,C(G/P))$. 
Equivalently, it is the universal $C^*$-algebra for covariant representations
$(\pi,u)$ with $\pi\colon C(G/P)\to B(H)$ and $u\colon G\to U(H)$, which satisfy $u_g\pi(f)u_g^*=\pi(\alpha_g(f))$. 

Recall that the multiplier algebra $M(A)$ of a $C^*$-algebra $A$ can be identified with the unital $C^*$-algebra of multipliers of $A$; equivalently, $M(A)=\mathcal{L}_A(A)$ (see \cite[\S 2]{Lance1995}). 
The crossed product has a canonical strictly continuous unitary representation
\begin{center}
   $u:G\longrightarrow U(M(C(G/P)\rtimes G))$, 
\end{center}
which is called the canonical implementing representation. 
By the universal property
of the full group $C^*$-algebra, it induces a nondegenerate $*$-homomorphism
\begin{center}
  $C^*(G)\longrightarrow M(C(G/P)\rtimes G)$, 
\end{center}
which sends the canonical unitary operator of $C^*(G)$ associated with $g\in G$ to the multiplier $u_g$. 
On the dense subalgebra $C_c(G)\subset C^*(G)$, this map is given, for $f\in C_c(G)$, by
\begin{center}
    $f\longmapsto
\bigl(g\mapsto f(g)1_{C(G/P)}\bigr)$, 
\end{center}
which belongs to $ C_c(G,C(G/P))$ as well as $C(G/P)\rtimes G$. 
Hence the canonical copy of $C^*(G)$ acts by multipliers on the crossed
product through the unitary representation implementing the action of $G$ on $G/P$.

\begin{proposition}\label{pGAcptX}
The $C^*$-algebra $C^*_{\mathcal{A}}(G(\mathbb{A}))$ acts by compact operators on the Hilbert module $X_{P}^{\mathcal{A}}$ over $C^*_{\mathcal{A}}(M(\mathbb{A}))$. 
\end{proposition}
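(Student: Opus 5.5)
The plan is to first prove the statement for the full Rieffel module $X_P^G$ over $C^*(P(\mathbb{A}))$, and then push it through the two tensor products in \eqref{eXPMG} and \eqref{eXPA} using Lemma \ref{lcpttensor}. Finally we pass to the automorphic quotient using Proposition \ref{pGviaGA}.

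\emph{Step 1: the Rieffel module.} By Green's imprimitivity theorem (in Rieffel's formulation), $X_P^G$ is a $\bigl(C_0(G/P)\rtimes G,\,C^*(P)\bigr)$-imprimitivity bimodule. This gives
\[
\mathcal K_{C^*(P)}(X_P^G)\cong C_0(G/P)\rtimes G .
\]
Under this identification, the left action of $C^*(G)$ on $X_P^G$ is the one coming from the canonical implementing representation $u\colon G\to U(M(C_0(G/P)\rtimes G))$ described before the statement. The adelic flag variety $G(\mathbb{A})/P(\mathbb{A})$ is compact, because $G(\mathbb{A})=P(\mathbb{A})K$ with $K$ compact. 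Hence $C_0(G/P)=C(G/P)$ is unital. For $f\in C_c(G)$, the multiplier $f\mapsto\bigl(g\mapsto f(g)1_{C(G/P)}\bigr)$ therefore lands in $C_c(G,C(G/P))\subset C(G/P)\rtimes G$ itself, not merely in its multiplier algebra. By density and continuity we get $\alpha_{X_P^G}(C^*(G))\subset\mathcal K_{C^*(P)}(X_P^G)$.

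Some bookkeeping is needed here. We must check that Green's identification intertwines the convolution action on $C_c(G)\subset X_P^G$ with the crossed product action, including the modular-function corrections in the inner product, since $P(\mathbb{A})$ is not unimodular. I will take this from Rieffel/Green and Williams' treatment.

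\emph{Step 2: tensoring.} The correspondence $C^*(M)$ over $C^*(M)$, with left $C^*(P)$-action through the surjection induced by $\varepsilon_P$, has $C^*(P)$ acting by compact operators: any element $b\in C^*(M)$ acts by left multiplication, which lies in $\mathcal K_{C^*(M)}(C^*(M))=C^*(M)$. Lemma \ref{lcpttensor} then gives
\[
\mathcal K_{C^*(P)}(X_P^G)\subset\mathcal K_{C^*(M)}(X_{P,M}^G).
\]
In the same way, $C^*(M)$ acts compactly on $C^*_{\mathcal A}(M)$ through the surjection $\varepsilon_{M,\mathcal A}$, so a second application of Lemma \ref{lcpttensor} yields
\[
\mathcal K_{C^*(M)}(X_{P,M}^G)\subset\mathcal K_{C^*_{\mathcal A}(M)}(X_P^{\mathcal A}).
\]
Combining these with Step 1, $\Phi_P^{\mathcal A}(C^*(G))\subset\mathcal K_{C^*_{\mathcal A}(M)}(X_P^{\mathcal A})$.

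\emph{Step 3: the automorphic quotient.} By Proposition \ref{pGviaGA}, $\Phi_P^{\mathcal A}$ factors through $\varepsilon_{G,\mathcal A}$. Since $\varepsilon_{G,\mathcal A}$ is surjective, the image of $C^*_{\mathcal A}(G)$ equals the image of $C^*(G)$, which is contained in the compact operators by Step 2. This proves the proposition.

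\emph{Main obstacle.} I expect the hard part to be Step 1, namely making precise that the copy of $C^*(G)$ lies in the crossed product rather than only in its multiplier algebra. Concretely, for $f\in C_c(G)$, I would first try to write the action of $f$ directly as a norm limit of finite sums of rank-one operators $T_{\xi,\eta}$ with $\xi,\eta\in C_c(G)$. This would use the compactness of $G/P$ to choose a compactly supported Bruhat-type cutoff function $c$ on $G$ with $\int_P c(xp)\,dp\equiv1$, and then apply Green's formula $\langle\xi,\eta\rangle$-expansion. The remaining steps are formal.
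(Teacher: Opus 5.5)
Your proposal is correct and follows essentially the same route as the paper. Green's imprimitivity theorem identifies $\mathcal K_{C^*(P)}(X_P^G)$ with $C(G/P)\rtimes G$, compactness of $G(\mathbb A)/P(\mathbb A)$ makes $C(G/P)$ unital so that $C_c(G)$ lands in $C_c(G,C(G/P))$, two applications of Lemma \ref{lcpttensor} carry compactness through the tensor products with $C^*(M)$ and $C^*_{\mathcal A}(M)$, and Proposition \ref{pGviaGA} passes to the automorphic quotient. The Bruhat-cutoff verification you sketch as a fallback is not needed, because the unitality argument already puts the image of $C^*(G)$ inside the crossed product itself rather than only in its multiplier algebra.
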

\begin{proof}
Recall that $X_{P}^{\mathcal{A}}\cong X_{P}^{G}\otimes_{C^{*}(P)}C^{*}(M)\otimes_{C^*(M)}C^*_{\mathcal{A}}(M)$. 
We first show that $C^*(G)$ acts on $X_{P}^{G}$ by $C^*(P)$-compact operators. 
By the Imprimitivity Theorem (see \cite[Section C.4]{DanaWilliams2007}), we know
\begin{center}
$\mathcal{K}_{C^{*}(P)}(X_{P}^{G})\cong
C_{0}(G/P)\rtimes G$, 
\end{center}
where the right side is the full crossed product.
Choose a maximal compact subgroup $K\subset G$. 
The adelic Iwasawa decomposition gives $G=PK$. 
Consequently, we have
\begin{center}
$G/P\cong K/(K\cap P)$, 
\end{center}
and hence $G/P$ is compact. 
Therefore, $C_{0}(G/P)=C(G/P)$, which is a unital $C^*$-algebra. 

Recall that the canonical unitary representation of $G$ in the multiplier algebra of the crossed product defines a homomorphism
\begin{center}
$C^{*}(G)\longrightarrow M\bigl(C(G/P)\rtimes G\bigr)$. 
\end{center}
Since $C(G/P)$ is unital, the integrated canonical representation sends $C_{c}(G)$ into $C_{c}\bigl(G,C(G/P)\bigr)$. 
Thus, by continuity and density, it sends  $C^*(G)$ into $C(G/P)\rtimes G$. 
More precisely, if $f\in C_{c}(G)$, then the corresponding element of the crossed product is represented by the compactly supported function
\begin{center}
$g\longmapsto f(g)1_{C(G/P)}$, 
\end{center}
which belongs to $C_{c}\bigl(G,C(G/P)\bigr)$. 
It follows by continuity and density of $C_{c}(G)$ in $C^{*}(G)$ that
\begin{center}
$
C^{*}(G)
\longrightarrow
C(G/P)\rtimes G
\cong
\mathcal{K}_{C^*(P)}(X_P^G).
$
\end{center}
Thus the left action $C^{*}(G)
\longrightarrow
\mathcal{L}_{C^*(P)}(X_P^G)$ has its image in the compact operators $\mathcal{K}_{C^*(P)}(X_P^G)$. 

Consider the tensor product $X_{P}^{G}\otimes_{C^{*}(P)}C^{*}(M)\otimes_{C^*(M)}C^*_{\mathcal{A}}(M)$. 
We have an action of $C^*(P)$ by compact operators on $C^{*}(M)$ since the action factors through $C^{*}(M)$. 
Thus, by Lemma \ref{lcpttensor}, $C^*(G)$ acts by $C^{*}(M)$-compact operators on $X_{P}^{G}\otimes_{C^{*}(P)}C^{*}(M)$. 
Since $C^*_{\mathcal{A}}(M)$ is a $C^*$-quotient algebra of $C^*(M)$, we have
\begin{center}
    $\mathcal{K}_{C^*_{\mathcal{A}}(M)}(C^*_{\mathcal{A}}(M))\cong C^*_{\mathcal{A}}(M)$
\end{center}
which implies that $C^*(M)$ acts on it by compact operators via the map $C^*(M)\to C^*_{\mathcal{A}}(M)$. 
By Lemma \ref{lcpttensor} again, $C^*(G)$ acts by $C^*_{\mathcal{A}}(M)$-compact operators on $X_{P}^{\mathcal{A}}$. 
The result then follows from Proposition \ref{pGviaGA}.  
\end{proof}

\begin{corollary}
If $C^*_{\mathcal{A}}(M(\mathbb{A}))$ acts through compact operators on a Hilbert module $\mathcal{H}$ over it, then $C^*_{\mathcal{A}}(G(\mathbb{A}))$ acts through compact operators on $\Ind_{P}^{G}\mathcal{H}$. 
\end{corollary}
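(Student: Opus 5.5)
The plan is to deduce the corollary directly from Proposition \ref{pGAcptX} and Lemma \ref{lcpttensor}, using the definition
\[
\Ind_P^G\mathcal H = X_P^{\mathcal A}\otimes_{C^*_{\mathcal A}(M(\mathbb A))}\mathcal H .
\]
Let $C$ denote the coefficient algebra of $\mathcal H$. The hypothesis says that the left action $\phi\colon C^*_{\mathcal A}(M(\mathbb A))\to\mathcal L_C(\mathcal H)$ takes values in $\mathcal K_C(\mathcal H)$. This is exactly the hypothesis of Lemma \ref{lcpttensor}, applied with $A=C^*_{\mathcal A}(M(\mathbb A))$, $B=C$, $X=X_P^{\mathcal A}$ and $Y=\mathcal H$. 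The lemma then gives
\[
\mathcal K_{C^*_{\mathcal A}(M(\mathbb A))}(X_P^{\mathcal A})\subset \mathcal K_C(\Ind_P^G\mathcal H).
\]
Here the inclusion is the map $T\mapsto T\otimes 1$, i.e. the restriction of the canonical map $\mathcal L_{C^*_{\mathcal A}(M)}(X_P^{\mathcal A})\to\mathcal L_C(X_P^{\mathcal A}\otimes\mathcal H)$.

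Next, by Proposition \ref{pGviaGA} the left action of $C^*_{\mathcal A}(G(\mathbb A))$ on $X_P^{\mathcal A}$ is well defined. By Proposition \ref{pGAcptX} its image lies in $\mathcal K_{C^*_{\mathcal A}(M(\mathbb A))}(X_P^{\mathcal A})$.

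The action of $C^*_{\mathcal A}(G(\mathbb A))$ on $\Ind_P^G\mathcal H$ is by definition $a\mapsto \Phi(a)\otimes 1$, where $\Phi$ is the left action on $X_P^{\mathcal A}$. Composing the two inclusions, every element of $C^*_{\mathcal A}(G(\mathbb A))$ acts on $\Ind_P^G\mathcal H$ by an element of $\mathcal K_C(\Ind_P^G\mathcal H)$, which is the claim.

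The only point needing care is that the inclusion furnished by Lemma \ref{lcpttensor} is indeed the map $T\mapsto T\otimes 1$. Only then does it intertwine the action of $C^*_{\mathcal A}(G(\mathbb A))$ on $X_P^{\mathcal A}$ with its action on the interior tensor product. I would check this on rank-one operators: for $x_1,x_2\in X_P^{\mathcal A}$, the operator $\theta_{x_1,x_2}\otimes 1$ is approximated in norm by finite sums $\sum\theta_{x_1\otimes e_i,\,x_2\otimes f_i}$. These sums come from approximating $\phi(\langle x_2,x\rangle)$ by finite-rank operators on $\mathcal H$, which is possible by the compactness hypothesis. Density and continuity then extend the statement to all of $\mathcal K_{C^*_{\mathcal A}(M)}(X_P^{\mathcal A})$. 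This is the content of \cite[Proposition 4.7]{Lance1995}, so no genuine obstacle is expected.
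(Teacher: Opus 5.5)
Your proposal is correct and is exactly the paper's argument: the paper proves the corollary in one line from $\Ind_P^G\mathcal H = X_P^{\mathcal A}\otimes_{C^*_{\mathcal A}(M)}\mathcal H$, Proposition~\ref{pGAcptX} and Lemma~\ref{lcpttensor}. Your extra check that the inclusion is $T\mapsto T\otimes 1$ is precisely what \cite[Proposition 4.7]{Lance1995} supplies; there, uniformity of the approximation comes from factoring $x_2=x_2'b$, so that only the single compact operator $\phi(b^*)$ has to be approximated by finite-rank operators.
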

\begin{proof}
Since $\Ind_{P}^{G}\mathcal{H}=X_{P}^{\mathcal{A}}\otimes_{C^*_{\mathcal{A}}(M)}\mathcal{H}$, it follows from Proposition \ref{pGAcptX} and Lemma \ref{lcpttensor}. 
\end{proof}

\begin{lemma}\label{ladeleGCCR}
The $C^*$-algebra of an adelic reductive group $G(\mathbb{A})$ acts through compact operators on  any irreducible unitary representation of $G(\mathbb{A})$.   
\end{lemma}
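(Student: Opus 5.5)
We will show that $G(\mathbb A)$ is a CCR (liminal) group: every irreducible unitary representation $\pi$ satisfies $\pi(C^*(G(\mathbb A)))=\mathcal K(H_\pi)$. Since $\pi(C^*(G(\mathbb A)))$ is an irreducible $C^*$-algebra of operators on $H_\pi$, it suffices to show $\pi(f)$ is compact for a dense set of $f$, namely for $f$ in the restricted tensor product $C_c^\infty(G(\mathbb R))\otimes\bigotimes'_p C_c^\infty(G(\mathbb Q_p))$ of smooth compactly supported functions. An irreducible contains no nonzero closed ideal other than one containing $\mathcal K$, so once the image meets $\mathcal K(H_\pi)$ nontrivially it contains all of $\mathcal K(H_\pi)$. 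Compactness of the image then upgrades this to equality.

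The first step is the local input, which we take from the literature. Every real reductive group is CCR by Harish-Chandra, since its irreducible unitary representations are admissible. Every $p$-adic reductive group is CCR by Bernstein's admissibility theorem, which gives that $\pi_p(f_p)$ has finite rank for $f_p\in C_c^\infty(G(\mathbb Q_p))$. For almost all $p$, the group $G$ is unramified over $\mathbb Q_p$ with hyperspecial $K_p$, and $\dim \pi_p^{K_p}\le 1$.

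The second step is the factorization. An irreducible unitary representation of the restricted product $G(\mathbb A)=\prod'_v(G(\mathbb Q_v),K_v)$ is a restricted tensor product $\pi\cong\widehat\bigotimes'_v\pi_v$ with respect to $K_v$-spherical unit vectors. This uses the fact that each factor is type I (indeed CCR), so the standard argument for restricted products of type I groups applies (Flath, Moore). Take a pure tensor $f=f_\infty\otimes\bigotimes_{p\in S}f_p\otimes\bigotimes_{p\notin S}1_{K_p}$ with $S$ finite. Then
\[
\pi(f)=\pi_\infty(f_\infty)\otimes\bigotimes_{p\in S}\pi_p(f_p)\otimes\bigotimes_{p\notin S}\pi_p(1_{K_p}).
\]
Here $\pi_p(1_{K_p})$ is the projection onto $\pi_p^{K_p}$, which has rank at most one. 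Thus $\pi(f)$ is a tensor product of finitely many compact operators and a rank-one projection on the remaining factor, hence compact. Linear combinations of such $f$ are dense in $L^1(G(\mathbb A))$ and therefore in $C^*(G(\mathbb A))$, so $\pi(C^*(G(\mathbb A)))\subset\mathcal K(H_\pi)$, and the irreducibility argument above gives equality.

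The main obstacle is the factorization step: justifying that every irreducible unitary representation of $G(\mathbb A)$, not just every automorphic one, factors as a restricted tensor product, and controlling the infinite tensor product of the spherical projections. We will handle this as follows. First, the element $e_S=\bigotimes_{p\notin S}1_{K_p}$ is a projection in the multiplier algebra. Second, $\pi(e_S)H_\pi$ is a representation of $G_S=\prod_{v\in S}G(\mathbb Q_v)$ that is irreducible whenever it is nonzero, because the Hecke algebra $\mathcal H(G^S,K^S)$ is commutative. Third, this representation is CCR as a finite product of CCR groups. Together these give compactness of $\pi(f)$ directly, without invoking the full Flath factorization.
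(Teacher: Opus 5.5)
Your argument is correct, but it is organized differently from the paper's.

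The paper also starts from Flath's factorization $\pi\simeq\bigotimes'_v\pi_v$, but then works entirely at the level of $C^*$-algebras. It uses two results from \cite{GMS2026}: the restricted tensor product decomposition $C^*(G(\mathbb A))\simeq\bigotimes'_v(C^*(G(F_v)),p_{K_v})$, and the identification $\bigotimes'_v(\mathcal K(H_{\pi_v}),P_{\xi_v})\simeq\mathcal K(\bigotimes'_v(H_{\pi_v},\xi_v))$. From these, $\pi(C^*(G(\mathbb A)))=\mathcal K(H_\pi)$ follows as an identity of restricted tensor products.

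You instead test $\pi$ only on the dense span of factorizable functions $f_S\otimes 1_{K^S}$. There compactness is elementary: finitely many compact local operators tensored with a rank-one spherical projection. You pass to the norm closure to get $\pi(C^*(G(\mathbb A)))\subseteq\mathcal K(H_\pi)$. Equality then follows from the standard fact that an irreducible $C^*$-algebra of operators meeting $\mathcal K(H_\pi)$ nontrivially contains all of it.

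This needs nothing about infinite tensor products of $C^*$-algebras. Your corner variant goes further and removes the dependence on the factorization theorem for unitary representations altogether. There, $H_\pi^{K^S}$ is an irreducible $G_S$-representation because the commutative algebra $e_SC^*(G^S)e_S$ acts by scalars on it, and $G_S$ is CCR. What the paper's route gives in exchange is a finer description of $\pi(C^*(G(\mathbb A)))$ as the restricted tensor product of the local images.

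One small correction: $\pi_p(1_{K_p})$ has rank at most one only where $K_p$ is hyperspecial. So $S$ should include the finitely many remaining places. At those places $1_{K_p}$ is itself an allowed $f_p$, and its image has finite rank by admissibility. The same enlargement of $S$ is needed for the commutativity of $\mathcal H(G^S,K^S)$ in your last paragraph.
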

\begin{proof}
Note that $G(\mathbb{A})=\prod_v' G(F_v)$
with respect to hyperspecial compact open subgroups $K_v\subset G(F_v)$ for almost all finite places $v$. 
We also know that $G(F_v)$ is a CCR group for each $v$, i.e. $\rho(C^*(G(F_v)))=\mathcal{K}(H_{\rho})$ for any irreducible unitary representation $(\rho,H_{\rho})$ of $G(F_v)$ (see \cite{HC1953I} for archimedean fields and \cite{Bernstein1974,BekkaEcht2020} for nonarchimedean fields of characteristic $0$). 
Let  
$(\pi,H)$ be an irreducible unitary  representation of  $G(\mathbb{A})$. 
By \cite{Flath77}, we have the following restricted tensor product decompositions: 
\begin{center}
    $\pi\simeq\bigotimes_v'\pi_v,
\qquad
H_\pi\simeq\bigotimes_v'(H_{\pi_v},\xi_v)$,
\end{center}
where each $(\pi_v,H_{\pi_v})$ is an irreducible unitary  representation of  $G(F_v)$ and $\xi_v\in H_{\pi_v}^{K_v}$ such that $\dim H_{\pi_v}^{K_v}=1$ for almost all finite places $v$.

The full group $C^*$-algebra also admits the corresponding restricted tensor
product decomposition
\begin{center}
   $C^*(G(\mathbb{A}))
\simeq
\bigotimes_v'
\bigl(C^*(G(F_v)),p_{K_v}\bigr)$,  
\end{center}
where $p_{K_v}\in C^*(G(F_v))$ is the projection associated with the
normalized characteristic function of $K_v$ (see \cite[Corollary 3.8]{GMS2026}). 
On the other hand, we have $\pi_v(p_{K_v})=P_{H_{\pi_v}^{K_v}}$. 
Hence, for almost all $v$, $\pi_v(p_{K_v})=P_{\xi_v}$, which is the rank-one projection onto $\mathbb C\xi_v$.
Therefore,
\begin{equation*}
\begin{aligned}
\pi\bigl(C^*(G(\mathbb{A}))\bigr)
&\simeq
\bigotimes_v'
\left(
\pi_v(C^*(G(F_v))),\pi_v(p_{K_v})
\right)\\
&=
\bigotimes_v'
\left(
\mathcal K(H_{\pi_v}),P_{\xi_v}
\right).
\end{aligned}    
\end{equation*}
By \cite[Proposition 4.7]{GMS2026}, we know
\begin{center}
  $\bigotimes_v'
\left(
\mathcal K(H_{\pi_v}),P_{\xi_v}
\right)
\simeq
\mathcal K\left(
\bigotimes_v'(H_{\pi_v},\xi_v)
\right)$. 
\end{center}
This is to say
\begin{center}
  $\pi\bigl(C^*(G(\mathbb{A}))\bigr)=\mathcal K(H_\pi)$, 
\end{center}
which proves that $C^*(G(\mathbb{A}))$ is CCR.
\end{proof}

We let $A=A_{M_P}(\mathbb{R})^0$. 
For an automorphic representation $(\sigma,H_{\sigma})$ of $M^1(\mathbb{A})$, we define
\begin{center}
    $\mathcal{H}_{\sigma}:=C_{0}(\widehat{A},H_{\sigma})$. 
\end{center}
This is the Hilbert $C_{0}(\widehat{A})$-module of $H_{\sigma}$-valued continuous functions on $\widehat{A}$ which vanish at infinity. 
Note that every unitary character of $A$ is uniquely of the form $\chi_\lambda(a)=e^{\langle \lambda,\log a\rangle}$ with $\lambda\in i\mathfrak{a}_P^*$. 
We may identify $\widehat{A}$  with $i\mathfrak{a}_P^*$. 
We may also identify 
$\widehat{A}$ with $i\mathfrak{a}_P$ by choosing an inner product identifying
$\mathfrak{a}_P^*\simeq\mathfrak{a}_P$. 

\begin{lemma}\label{lCAMCAM1}
$C^*_{\mathcal{A}}(M)\cong C^*_{\mathcal{A}}(M^1)\otimes C_0(\widehat{A})$.
\end{lemma}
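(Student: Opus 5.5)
We prove the isomorphism by splitting the group $M=M^1\times A$, splitting the automorphic representation accordingly, and identifying the image of $C^*(M)$ through Fourier transform on $A$. Throughout, $A=A_{M_P}(\mathbb{R})^0$ and $M=M(\mathbb{A})$.

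\emph{Step 1: the group algebra.} We start from the direct product decomposition $M=M^1\times A$, recalled in Section \ref{sautodec}. Since $A\cong\mathbb{R}^k$ is abelian, hence amenable and nuclear, we get
\[
C^*(M)\cong C^*(M^1)\otimes C^*(A),
\]
with the tensor product unambiguous. The Fourier transform then gives $C^*(A)\cong C_0(\widehat{A})$, so
\[
C^*(M)\cong C^*(M^1)\otimes C_0(\widehat{A}).
\]

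\emph{Step 2: the representation.} By \eqref{eL2GG1A} applied to $M$,
\[
L^2\bigl(M(\mathbb{Q})\backslash M\bigr)\cong L^2\bigl(M(\mathbb{Q})\backslash M^1\bigr)\widehat{\otimes}L^2(A).
\]
Under this identification $R_M$ is the outer tensor product $R_{M^1}\otimes\lambda_A$, where $\lambda_A$ is the regular representation of $A$. Hence the image of $R_M$ is the closure of the algebraic tensor product
\[
R_{M^1}\bigl(C^*(M^1)\bigr)\odot\lambda_A\bigl(C^*(A)\bigr)
\]
inside $B\bigl(L^2(M(\mathbb{Q})\backslash M^1)\bigr)\,\overline{\otimes}\,B\bigl(L^2(A)\bigr)$. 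This closure is, by definition, the minimal (spatial) tensor product of the two images:
\[
C^*_{\mathcal{A}}(M^1)\otimes_{\min}\lambda_A\bigl(C^*(A)\bigr).
\]
Since $A$ is amenable, $\lambda_A$ is faithful on $C^*(A)$. Therefore $\lambda_A(C^*(A))\cong C^*(A)\cong C_0(\widehat{A})$, the isomorphism being implemented by the Fourier--Plancherel unitary, which conjugates $\lambda_A(f)$ to multiplication by $\widehat{f}$ on $L^2(\widehat{A})$. Combining these gives
\[
C^*_{\mathcal{A}}(M)\cong C^*_{\mathcal{A}}(M^1)\otimes C_0(\widehat{A}).
\]
Since $C_0(\widehat{A})$ is commutative, hence nuclear, the minimal tensor norm agrees with every other $C^*$-tensor norm, so the tensor product is unambiguous. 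This is consistent with \eqref{edualGG1}: the spectrum of the right-hand side is $\Pi_{\mathcal{A}}(M^1)\times\widehat{A}$.

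\emph{Main obstacle.} The one point needing care is that the image of the tensor product homomorphism is exactly the spatial tensor product of the images, with no additional kernel. The plan is to justify this with the standard fact that for representations $\pi_1,\pi_2$ of $C^*$-algebras $B_1,B_2$, the representation $\pi_1\otimes\pi_2$ of $B_1\otimes_{\max}B_2$ has image $\pi_1(B_1)\otimes_{\min}\pi_2(B_2)$. Here $B_1\otimes_{\max}B_2=B_1\otimes_{\min}B_2$ because $B_2=C^*(A)$ is nuclear, so this fact applies directly.

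It also has to be checked that the identification in \eqref{eL2GG1A} is $M$-equivariant, with $M^1$ acting on the first factor and $A$ on the second. This uses that $M(\mathbb{Q})\cap A=\{1\}$ and that $A$ is central, so $M(\mathbb{Q})\backslash M\cong\bigl(M(\mathbb{Q})\backslash M^1\bigr)\times A$ as a space with a right $M^1\times A$-action.
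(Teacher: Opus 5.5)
Your proof is correct and follows the same route as the paper: split $M(\mathbb{A})=M(\mathbb{A})^1\times A$, split $L^2$ accordingly, and use $C^*(M)\cong C^*(M^1)\otimes C^*(A)$ together with $C^*(A)\cong C_0(\widehat{A})$. Your version also makes explicit the step the paper leaves implicit, namely that the image of $R_{M^1}\otimes\lambda_A$ is exactly the spatial tensor product of the two images. One small correction: the equivariant splitting $M(\mathbb{Q})\backslash M(\mathbb{A})\cong\bigl(M(\mathbb{Q})\backslash M(\mathbb{A})^1\bigr)\times A$ rests on $M(\mathbb{Q})\subseteq M(\mathbb{A})^1$, which holds by the product formula, and not merely on $M(\mathbb{Q})\cap A=\{1\}$ and centrality of $A$.
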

\begin{proof}
 As $M(\mathbb{A})\simeq M^{1}(\mathbb{A})\times A$, we have
\begin{equation}
    L^{2}\bigl(M(\mathbb{Q})\backslash M(\mathbb{A})\bigr)
\simeq
L^{2}\bigl(M(\mathbb{Q})\backslash M^{1}(\mathbb{A})\bigr)
\widehat{\otimes}L^{2}(A). 
\end{equation}
Since $L^{2}(A)\simeq\int_{\widehat A}^{\oplus}\mathbb{C}_{\chi}\,d\chi$, we further obtain
\begin{equation}\label{eRMRM1}
    R_M\simeq
\int_{\widehat A}^{\oplus}
\left(R_{M^{1}}\otimes\chi\right)\,d\chi.
\end{equation}
More explicitly, write 
\begin{center}
    $R_{M^{1}}
\simeq
\int_{\Pi_{\mathcal{A}}(M^{1})}^{\oplus}
\sigma\otimes 1_{\mathcal{M}_{\sigma}}\,d\mu(\sigma)$,
\end{center}
where $1_{\mathcal{M}_{\sigma}}$ denotes the identity action on the multiplicity space $\mathcal{M}_{\sigma}$ of $\sigma$. 
Then we can rewrite \eqref{eRMRM1} as
\begin{equation}
  R_M\simeq
\int_{\Pi_{\mathcal{A}}(M^{1})}^{\oplus}
\int_{\widehat A}^{\oplus}
(\sigma\otimes\chi)\otimes 1_{\mathcal{M}_{\sigma}}\,
d\chi\,d\mu(\sigma).   
\end{equation}
The result then follows from the facts that $C^*(M(\mathbb{A}))\cong C^*(M(\mathbb{A})^1)\otimes C^*(A)$ and $C^*(A)\cong C_0(\widehat{A})$.
\end{proof}

We take $f=f_1\otimes \chi\in C^*_{\mathcal{A}}(M)$ with $f_1\in  C^*_{\mathcal{A}}(M^1)$ and $\chi\in C_0(\widehat{A})$. 
Then the action of $C^*_{\mathcal{A}}(M)$ on $\mathcal{H}_{\sigma}$ is given explicitly as
\begin{equation}\label{eCAMHsigma}
    (f\cdot h)(\varphi)=(\sigma\otimes\varphi)(f)h(\varphi)=(\sigma(f_1)\otimes\chi(\varphi))h(\varphi) 
\end{equation}
for $h\in \mathcal{H}_{\sigma}$ and $\varphi\in \widehat{A}$ by Lemma \ref{lCAMCAM1}.

\begin{lemma}\label{lCAMcpt}
$C^*_{\mathcal{A}}(M(\mathbb{A}))$ acts by compact operators on the Hilbert module $\mathcal{H}_{\sigma}$. 
\end{lemma}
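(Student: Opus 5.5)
We prove Lemma \ref{lCAMcpt} by reducing to the CCR property of the adelic group (Lemma \ref{ladeleGCCR}) together with the elementary fact that $C_0(\widehat A)$ acts by compact operators on itself. By Lemma \ref{lCAMCAM1} we have $C^*_{\mathcal{A}}(M)\cong C^*_{\mathcal{A}}(M^1)\otimes C_0(\widehat A)$. On the other side,
\[
\mathcal{H}_\sigma=C_0(\widehat A,H_\sigma)\cong H_\sigma\otimes C_0(\widehat A)
\]
as Hilbert $C_0(\widehat A)$-modules, this being the exterior tensor product of the Hilbert space $H_\sigma$ with the standard module $C_0(\widehat A)$. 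It is well known that
\[
\mathcal{K}_{C_0(\widehat A)}\bigl(H_\sigma\otimes C_0(\widehat A)\bigr)\cong \mathcal{K}(H_\sigma)\otimes C_0(\widehat A)\cong C_0\bigl(\widehat A,\mathcal{K}(H_\sigma)\bigr),
\]
with $\mathcal{K}(H_\sigma)$ acting pointwise. So it suffices to show that the action \eqref{eCAMHsigma} lands in the norm-continuous, vanishing-at-infinity $\mathcal{K}(H_\sigma)$-valued functions.

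\emph{Step 1.} For an elementary tensor $f=f_1\otimes\chi$, the action is the pointwise multiplication operator $\varphi\mapsto \sigma(f_1)\chi(\varphi)$. Since $\sigma$ is irreducible, Lemma \ref{ladeleGCCR}, applied to $M(\mathbb{A})^1$ or directly to $M(\mathbb{A})$ via $\sigma\otimes\varphi$, gives $\sigma(f_1)\in\mathcal{K}(H_\sigma)$. Here we use that $f_1$ is the image of an element of $C^*(M^1)$ and that $\sigma$ factors through $C^*_{\mathcal{A}}(M^1)$, because $\sigma$ is automorphic. Hence this operator is $\sigma(f_1)\otimes\chi\in\mathcal{K}(H_\sigma)\otimes C_0(\widehat A)$. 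Concretely, approximate $\sigma(f_1)$ by finite-rank operators $\sum\theta_{\xi_i,\eta_i}$, and $\chi$ by products $\chi=\chi_1\overline{\chi_2}$ (possible in the commutative $C^*$-algebra $C_0(\widehat A)$, e.g.\ via Cohen factorization). Then $\theta_{\xi,\eta}\otimes\chi_1\overline{\chi_2}$ is exactly the rank-one module operator $T_{\xi\chi_1,\eta\chi_2}$. So the action lies in $\mathcal{K}_{C_0(\widehat A)}(\mathcal{H}_\sigma)$.

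\emph{Step 2.} Finite sums of elementary tensors are dense in $C^*_{\mathcal{A}}(M^1)\otimes C_0(\widehat A)$. The action map is a $*$-homomorphism into $\mathcal{L}_{C_0(\widehat A)}(\mathcal{H}_\sigma)$ and is therefore contractive, and $\mathcal{K}_{C_0(\widehat A)}(\mathcal{H}_\sigma)$ is norm closed. Hence the whole algebra acts by compact operators.

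The main obstacle is making sure the action \eqref{eCAMHsigma} is well defined on the tensor product (Lemma \ref{lCAMCAM1}), i.e.\ that $\sigma$ indeed factors through $C^*_{\mathcal{A}}(M^1)$. We would handle this by noting that $\sigma$ is weakly contained in $R_{M^1}$, so $\ker R_{M^1}\subset\ker\sigma$. A second point is that the tensor product in Lemma \ref{lCAMCAM1} is unambiguous because $C_0(\widehat A)$ is nuclear. With these in hand, the identification $\mathcal{K}_{C_0(\widehat A)}(C_0(\widehat A,H_\sigma))\cong C_0(\widehat A,\mathcal{K}(H_\sigma))$ finishes the argument.
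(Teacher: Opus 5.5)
Your proposal is correct and follows the paper's route. You reduce via Lemma \ref{lCAMCAM1} to $C^*_{\mathcal A}(M^1)\otimes C_0(\widehat A)$, use the CCR property (Lemma \ref{ladeleGCCR}) to get $\sigma(f_1)\in\mathcal K(H_\sigma)$, and conclude that the action lands in $C_0(\widehat A,\mathcal K(H_\sigma))\cong\mathcal K_{C_0(\widehat A)}(\mathcal H_\sigma)$; your explicit rank-one operators $T_{\xi\chi_1,\eta\chi_2}$ and the density/closure step only spell out details the paper leaves implicit.
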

\begin{proof}
By Lemma \ref{lCAMCAM1}, it suffices to show that $C^*_{\mathcal{A}}(M^1)\otimes C_0(\widehat{A})$ acts as compact operators on $\mathcal{H}_{\sigma}$. 
As $M(\mathbb{A})\cong M(\mathbb{A})^1\times A_{M}(\mathbb{R})^0$, an irreducible representation $\rho$ of $M(\mathbb{A})^1$ also gives an irreducible one of $M(\mathbb{A})$ by letting $A_{M}(\mathbb{R})^0$ act trivially. 
By Lemma \ref{ladeleGCCR}, $M(\mathbb{A})^1$ is also CCR. 
The first factor of $C^*_{\mathcal{A}}(M^1)$ acts through compact operators on $H_{\sigma}$. 
So $C^*_{\mathcal{A}}(M(\mathbb{A}))$ acts through the $C^*$-algebra $C_0(\widehat{A},\mathcal{K}(H_{\sigma}))$, which can be identified with $\mathcal{K}_{C_0(\widehat{A})}(\mathcal{H}_{\sigma})$.
\end{proof}

Recall that $\Ind_P^G\mathcal{H}_{\sigma}$ is a Hilbert $(C^*_{\mathcal{A}}(G),C_0(\widehat{A}))$-correspondence. 

\begin{corollary}\label{cCAGcptHsigma}
$C^*_{\mathcal{A}}(G(\mathbb{A}))$ acts by compact operators on the Hilbert module $\Ind_{P}^{G}\mathcal{H}_{\sigma}$.   
\end{corollary}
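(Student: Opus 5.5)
The plan is to obtain Corollary \ref{cCAGcptHsigma} by combining Lemma \ref{lCAMcpt} with the corollary following Proposition \ref{pGAcptX}, which in turn rests on Lemma \ref{lcpttensor}. The only real issue is to match the hypotheses exactly.

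First we regard $\mathcal{H}_{\sigma}=C_0(\widehat{A},H_{\sigma})$ as a Hilbert correspondence from $C^*_{\mathcal{A}}(M)$ to $C=C_0(\widehat{A})$. The right inner product is the pointwise one, $\langle h_1,h_2\rangle(\varphi)=\langle h_1(\varphi),h_2(\varphi)\rangle_{H_\sigma}$. The left action is given by \eqref{eCAMHsigma}, with $C^*_{\mathcal{A}}(M)$ identified with $C^*_{\mathcal{A}}(M^1)\otimes C_0(\widehat{A})$ via Lemma \ref{lCAMCAM1}.

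We should check that this left action is well defined on the automorphic quotient, i.e. that it kills $\ker R_M$. This holds because $\sigma$ is automorphic, so each twist $\sigma\otimes\varphi$ is weakly contained in $R_M$ by \eqref{eRMRM1}. We should also check that it lands in the adjointable operators: it acts fibrewise by bounded operators that depend continuously on $\varphi$ in norm, and its adjoint is given fibrewise as well.

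Next, Lemma \ref{lCAMcpt} says this left action takes values in $C_0(\widehat{A},\mathcal{K}(H_\sigma))\cong\mathcal{K}_{C_0(\widehat{A})}(\mathcal{H}_\sigma)$. In the notation of Lemma \ref{lcpttensor}, we take $X=X_P^{\mathcal{A}}$ as a Hilbert $A$-module with $A=C^*_{\mathcal{A}}(M)$, and $Y=\mathcal{H}_\sigma$ with $B=C_0(\widehat{A})$. The lemma then gives
\[
\mathcal{K}_{C^*_{\mathcal{A}}(M)}(X_P^{\mathcal{A}})\subset\mathcal{K}_{C_0(\widehat{A})}\bigl(X_P^{\mathcal{A}}\otimes_{C^*_{\mathcal{A}}(M)}\mathcal{H}_\sigma\bigr)=\mathcal{K}_{C_0(\widehat{A})}(\Ind_P^G\mathcal{H}_\sigma).
\]
Here the inclusion is the map $T\mapsto T\otimes 1$.

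Finally, by Proposition \ref{pGAcptX} the left action of $C^*_{\mathcal{A}}(G)$ on $X_P^{\mathcal{A}}$ lands in $\mathcal{K}_{C^*_{\mathcal{A}}(M)}(X_P^{\mathcal{A}})$. By definition of the interior tensor product, the action of $C^*_{\mathcal{A}}(G)$ on $\Ind_P^G\mathcal{H}_\sigma$ is $a\mapsto\alpha(a)\otimes 1$. Composing these two maps gives the claim.

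The main obstacle, such as it is, is the bookkeeping in the first step. We must make sure the action in Lemma \ref{lCAMcpt} is genuinely an action of the quotient $C^*_{\mathcal{A}}(M)$, and not only of $C^*(M)$, and that the identification of $C_0(\widehat{A},\mathcal{K}(H_\sigma))$ with the $C_0(\widehat{A})$-compact operators is compatible with the tensor-product structure. Once this is granted, the argument is a formal application of the corollary after Proposition \ref{pGAcptX}, which asserts exactly the statement for any $\mathcal{H}$ on which $C^*_{\mathcal{A}}(M)$ acts compactly.
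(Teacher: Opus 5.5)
Your proposal is correct and follows the paper's argument exactly. Lemma \ref{lCAMcpt} gives compactness of the $C^*_{\mathcal{A}}(M)$-action on $\mathcal{H}_\sigma$; Lemma \ref{lcpttensor}, applied with $X=X_P^{\mathcal{A}}$ and $Y=\mathcal{H}_\sigma$, transfers it to $\Ind_P^G\mathcal{H}_\sigma$; and Proposition \ref{pGAcptX} supplies the compact action of $C^*_{\mathcal{A}}(G)$ on $X_P^{\mathcal{A}}$. Your extra check that the action on $\mathcal{H}_\sigma$ factors through the automorphic quotient, because each $\sigma\otimes\varphi$ is weakly contained in $R_M$, is correct and is a detail the paper leaves implicit.
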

\begin{proof}
It follows from Lemma \ref{lCAMcpt}, Proposition \ref{pGAcptX} and Lemma \ref{lcpttensor}. 
\end{proof}

\section{The Langlands spectral decomposition of $C^*_{\mathcal{A}}(G(\mathbb{A}))$}\label{sinjG}

By Corollary \ref{cCAGcptHsigma}, $C^*_{\mathcal{A}}(G(\mathbb{A}))$ acts as $C_0(\widehat{A})$-compact operators on the Hilbert $C_0(\widehat{A})$-module $\Ind_{P}^{G}\mathcal{H}_{\sigma}$ as shown in Section \ref{scptCAG}. 
Recall the full discrete spectrum: 
    \begin{center}
        $H_{M,\disc}=L^2_{\disc}
\bigl(M(\mathbb{Q})\backslash M(\mathbb{A})^1\bigr)\cong 
\bigoplus_{\sigma\in\Pi_{\disc}(M^1)}m(\sigma)\cdot H_\sigma$ 
    \end{center}
with each $m(\sigma)$ finite.
We also recall the $\sigma$-isotypic subspace of $H_{M,\disc}$: 
\begin{center}
   $H_{M,\sigma}=L^2_{\disc}
\bigl(M(\mathbb{Q})\backslash M(\mathbb{A})^1\bigr)[\sigma]=\oplus_{k=1}^{m(\sigma)}H_\sigma$. 
\end{center}
We can form the Hilbert $C_0(\widehat{A})$-modules
\begin{center}
$\mathcal{H}_{M,\disc}:=C_{0}(\widehat{A},H_{M,\disc})$, and $\mathcal{H}_{M,\sigma}:=C_{0}(\widehat{A},H_{M,\sigma})$ 
\end{center}
as well as the Hilbert $(C^*_{\mathcal{A}}(G),C_0(\widehat{A}))$-correspondences
\begin{center}
$\Ind_P^G \mathcal{H}_{M,\disc}=\Ind_{P}^{G}\bigl(C_{0}(\widehat{A},H_{M,\disc})\bigr)$, and $\Ind_P^G \mathcal{H}_{M,\sigma}=\Ind_{P}^{G}\bigl(C_{0}(\widehat{A},H_{M,\sigma})\bigr)$.  
\end{center}

\subsection{The decomposition via discrete Levi datum}

\begin{lemma}\label{lCAGdiscCPT}
$C^*_{\mathcal A}(G(\mathbb{A}))$ acts as $C_{0}(\widehat{A})$-compact operators on the Hilbert $C_{0}(\widehat{A})$-modules $\Ind_{P}^{G}\mathcal{H}_{M,\disc}$ and $\Ind_P^G \mathcal{H}_{M,\sigma}$. 
\end{lemma}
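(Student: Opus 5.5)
The plan is to reduce the statement to the single-representation case, Corollary \ref{cCAGcptHsigma}, together with Lemma \ref{lcpttensor}. Write $\Ind_P^G\mathcal{H}=X_P^{\mathcal A}\otimes_{C^*_{\mathcal A}(M)}\mathcal{H}$. By Proposition \ref{pGAcptX}, $C^*_{\mathcal A}(G)$ maps into $\mathcal K_{C^*_{\mathcal A}(M)}(X_P^{\mathcal A})$. Lemma \ref{lcpttensor} then gives $\mathcal K_{C^*_{\mathcal A}(M)}(X_P^{\mathcal A})\subset\mathcal K_{C_0(\widehat A)}(\Ind_P^G\mathcal H)$, provided $C^*_{\mathcal A}(M)$ acts on $\mathcal H$ by $C_0(\widehat A)$-compact operators. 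So the whole task is to verify this compactness for $\mathcal H=\mathcal H_{M,\sigma}$ and $\mathcal H=\mathcal H_{M,\disc}$, in the same way as Lemma \ref{lCAMcpt}.

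For $\mathcal H_{M,\sigma}=C_0(\widehat A,H_{M,\sigma})$, note that $H_{M,\sigma}\cong H_\sigma\otimes\mathbb C^{m(\sigma)}$ with $m(\sigma)<\infty$. By Lemma \ref{ladeleGCCR}, $C^*_{\mathcal A}(M^1)$ acts on $H_\sigma$ through $\mathcal K(H_\sigma)$, and hence on $H_{M,\sigma}$ through $\mathcal K(H_\sigma)\otimes 1_{m(\sigma)}\subset\mathcal K(H_{M,\sigma})$. Using \eqref{eCAMHsigma} and Lemma \ref{lCAMCAM1}, an elementary tensor $f_1\otimes\chi$ acts as the function $\varphi\mapsto \sigma(f_1)\otimes 1\cdot\chi(\varphi)$. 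This lies in $C_0(\widehat A,\mathcal K(H_{M,\sigma}))\cong\mathcal K_{C_0(\widehat A)}(\mathcal H_{M,\sigma})$. Since the action is continuous and such tensors span a dense subspace, the same holds for all of $C^*_{\mathcal A}(M)$. Equivalently, $\mathcal H_{M,\sigma}$ is a finite direct sum of copies of $\mathcal H_\sigma$, and compactness is preserved under finite direct sums. Applying Lemma \ref{lcpttensor} then finishes this case.

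The discrete case $\mathcal H_{M,\disc}$ is where I expect the main obstacle, because $H_{M,\disc}$ is an infinite orthogonal sum $\bigoplus_\sigma H_{M,\sigma}$. An element $f\in C^*_{\mathcal A}(M^1)$ acts by $\bigoplus_\sigma \sigma(f)\otimes 1_{m(\sigma)}$, and this block-diagonal operator is compact only if $\|\sigma(f)\|\to 0$ along $\Pi_{\disc}(M^1)$. I would first try to establish this decay for $f$ in a dense subalgebra, namely $f$ that are $K$-finite, bi-invariant under a fixed open compact $K_f$, and built from a test function on the archimedean part. For such $f$, only those $\sigma$ with $K_f$-fixed vectors of bounded $K_\infty$-type contribute. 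By the finiteness of the discrete spectrum with fixed level and $K$-type, together with Harish-Chandra's finiteness theorem and the decay of $\sigma_\infty(f_\infty)$ as the infinitesimal character grows (Riemann--Lebesgue type estimates obtained by applying a power of the Casimir), $\sigma(f)\to 0$. In fact $R_{M,\disc}(f)$ is compact on $H_{M,\disc}$, since it is a restriction of a trace-class type operator for such smooth $f$, in the spirit of the trace class theorem of Müller.

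Granting that $R_{M,\disc}(f)$ is compact on $H_{M,\disc}$ for $f\in C^*_{\mathcal A}(M^1)$, tensoring with $\chi\in C_0(\widehat A)$ again lands in $C_0(\widehat A,\mathcal K(H_{M,\disc}))=\mathcal K_{C_0(\widehat A)}(\mathcal H_{M,\disc})$, and Lemma \ref{lcpttensor} together with Proposition \ref{pGAcptX} gives the claim. If the compactness of the discrete action proves hard to obtain directly, a fallback is to regard $\Ind_P^G\mathcal H_{M,\disc}$ as the completed direct sum of the modules $\Ind_P^G\mathcal H_{M,\sigma}$ and to prove norm-decay of the components of $\Phi(f)$ across $\sigma$. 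This decay statement is exactly the same analytic input.
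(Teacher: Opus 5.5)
Your proposal is correct and is essentially the paper's proof. The $\mathcal H_{M,\sigma}$ case follows from Corollary~\ref{cCAGcptHsigma} together with $m(\sigma)<\infty$. The $\mathcal H_{M,\disc}$ case follows from M\"uller's theorem for $f\in C_c^\infty(M(\mathbb A)^1)$, density of such $f$ in $C^*_{\mathcal A}(M(\mathbb A)^1)$ and norm-closedness of the compacts, and then Lemma~\ref{lCAMCAM1}, Proposition~\ref{pGAcptX} and Lemma~\ref{lcpttensor}. One wording slip: M\"uller's theorem says that $R_{M,\disc}(f)$ itself is trace class, not that it is a restriction of a trace-class operator (the full $R_M(f)$ is not compact). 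This also makes your Casimir-decay sketch unnecessary.
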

\begin{proof}
As $m(\sigma)$ is finite, the compactness of the action on $\Ind_P^G \mathcal{H}_{M,\sigma}$ follows from Corollary  \ref{cCAGcptHsigma}. 

For $\Ind_{P}^{G}\mathcal{H}_{M,\disc}$, we take $f\in C^{\infty}_{c}(M(\mathbb{A})^1)$. 
It is known that $R_{M,\disc}(f)$ is of the trace class and thus compact (see \cite[Corollary 8.10]{Muller1989}). 
Note that $C^{\infty}_{c}(M(\mathbb{A})^1)$ is dense in $C^*_{\mathcal A}(M(\mathbb{A})^1)$ and the compact operators are norm closed.  
Thus $C^*_{\mathcal A}(M(\mathbb{A})^1)$ acts as compact operators on $H_{M,\disc}$. 
Hence $C^*_{\mathcal A}(M(\mathbb{A}))$ acts as compact operators on $\mathcal{H}_{M,\disc}$ by Lemma \ref{lCAMCAM1} (see also Lemma \ref{lCAMcpt}). 
Hence, by Proposition \ref{pGAcptX} and Lemma \ref{lcpttensor}, $C^*_{\mathcal A}(G(\mathbb{A}))$ acts by compact operators on $\Ind_{P}^{G}\mathcal{H}_{M,\disc}$. 
\end{proof}

\begin{lemma}\label{lHmod0}
Let $E=\bigoplus_{i \in I} E_i$ be a Hilbert $B$-module and let $T=\bigoplus_{i \in I}T_i$ be a diagonal operator on $E$. 
If $T\in\mathcal K_B(E)$, then
\begin{center}
    $\#\{i\in I\mid \|T_i\|\geq \varepsilon\}<\infty$ 
\end{center}
for every $\varepsilon>0$. 
\end{lemma}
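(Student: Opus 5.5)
We approximate $T$ in norm by finite-rank operators and read off what finite-rank operators look like on the summands. Let $p_i\in\mathcal L_B(E)$ be the orthogonal projection onto $E_i$. It is adjointable, with $p_i^*=p_i$, and $T$ diagonal means $p_iTp_i=T_i$ on $E_i$ and $p_iTp_j=0$ for $i\neq j$. Fix $\varepsilon>0$ and choose a finite sum of rank-one operators $S=\sum_{k=1}^{n}T_{x_k,y_k}$ with $\|T-S\|<\varepsilon/3$. This is possible because $\mathcal K_B(E)$ is the norm closure of the span of the $T_{x,y}$.

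\emph{Step 1: the vectors $x_k,y_k$ are essentially finitely supported.} Every $x\in E$ satisfies $\langle x,x\rangle_B=\sum_i\langle p_ix,p_ix\rangle_B$, with the series converging in norm in $B$. Hence the tails $x-\sum_{i\in F}p_ix$ tend to $0$ in norm as the finite set $F\subset I$ grows. Choose a finite set $F\subset I$ such that $q:=1-\sum_{i\in F}p_i$ satisfies
\[
\|qx_k\|\,\|y_k\|<\frac{\varepsilon}{3n} \quad\text{for all } k .
\]
Then $qS=\sum_kT_{qx_k,y_k}$ has norm less than $\varepsilon/3$, using $\|T_{x,y}\|\le\|x\|\|y\|$.

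\emph{Step 2: conclude.} For $i\notin F$ we have $p_i=p_iq$. Since $p_i$ is a contraction,
\[
\|T_i\|=\|p_iTp_i\|\le\|p_i(T-S)p_i\|+\|p_iqSp_i\|<\frac{\varepsilon}{3}+\frac{\varepsilon}{3}<\varepsilon .
\]
Therefore $\{i:\|T_i\|\ge\varepsilon\}\subset F$, which is finite.

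The only point needing care is Step 1: we must know that $\sum_{i\in F}p_ix\to x$ in norm for the Hilbert-module direct sum. This holds by the definition of $\bigoplus E_i$ as the set of families $(x_i)$ with $\sum\langle x_i,x_i\rangle$ norm-convergent in $B$ (cf.\ \cite{Lance1995}). The norm of the tail is then $\bigl\|\sum_{i\notin F}\langle x_i,x_i\rangle_B\bigr\|^{1/2}$, which tends to $0$ as $F$ increases, so the argument goes through.
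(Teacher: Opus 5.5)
Your proof is correct and is essentially the paper's argument. Both approximate $T$ by a finite sum of rank-one operators, note that $1-p_J$ composed with a rank-one operator becomes small for large finite $J$, and bound $\|T_i\|=\|p_iTp_i\|=\|p_i(1-p_J)Tp_i\|$ for $i\notin J$; your $\varepsilon/3$ version simply makes explicit, via $qT_{x,y}=T_{qx,y}$ and the tail estimate in the Hilbert-module direct sum, the step the paper asserts without justification.
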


\begin{proof}
For a finite subset $J\subset I$, 
let $p_J$ denote the projection onto the finite direct sum $\bigoplus_{i\in J}E_i$. 
For a rank-one operator $\theta_{\xi,\eta}$, 
we have $\|(1-p_J)\theta_{\xi,\eta}\|\longrightarrow0$ 
for sufficiently large $J$. 
Hence, by norm density of finite rank operators in
$\mathcal K_B(E)$, we know $\|(1-p_J)T\|
\longrightarrow0$. 
If $i\notin J$, then $\|T_i\|=\|p_iTp_i\|\leq\|(1-p_J)T\|$, which completes the proof. 
\end{proof}

Now we are able to define the $*$-homomorphism 
\begin{center}
 $\Phi_{M,\sigma}\colon C^*_{\mathcal{A}}(G(\mathbb{A}))\to\mathcal{K}_{C_0(\widehat{A})}\bigl(\Ind_P^G \mathcal{H}_{M,\sigma}\bigr)$  
\end{center}
whose image will be denoted by $C^*_{\mathcal A}(G(\mathbb{A}))_{M,\sigma}$. 
\begin{proposition}\label{pweldefhom}
There is a well-defined homomorphism
\begin{equation}\label{ePsiCAG}
    \Phi:=\bigoplus_{[M,\sigma]}\Phi_{M,\sigma}\colon C^{*}_{\mathcal{A}}(G(\mathbb{A}))
\longrightarrow
\bigoplus_{[M,\sigma]}
C^*_{\mathcal A}(G(\mathbb{A}))_{M,\sigma}
\end{equation}
such that $\lim_{[M,\sigma]\rightarrow\infty}
\|\Phi_{M,\sigma}(a)\|=0 $ for every $a\in C^*_{\mathcal{A}}(G(\mathbb{A}))$.  
\end{proposition}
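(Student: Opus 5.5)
To prove Proposition \ref{pweldefhom}, I would realize all the component maps $\Phi_{M,\sigma}$ inside one Hilbert module on which $C^*_{\mathcal A}(G(\mathbb A))$ acts by compact operators. Lemma \ref{lHmod0} then forces the norms of the components to decay. There are finitely many associate classes $[M]$ of Levi subgroups, and for each we fix a representative $P=MN$. It therefore suffices to work with one $M$ and consider all $\sigma\in\Pi_{\disc}(M^1)$, with $[M,\sigma]$ running over the $W(G,M)$-orbits. A finite direct sum of $c_0$-families is again a $c_0$-family.

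\textbf{Step 1 (orthogonal decomposition).} Since $H_{M,\disc}=\widehat{\bigoplus}_{\sigma}H_{M,\sigma}$ and each $H_{M,\sigma}$ is $M(\mathbb A)^1$-invariant, we get
\[\mathcal H_{M,\disc}=C_0(\widehat A,H_{M,\disc})\cong\bigoplus_\sigma \mathcal H_{M,\sigma}\]
as Hilbert $C_0(\widehat A)$-modules. This is a completed direct sum, and it is compatible with the action of $C^*_{\mathcal A}(M)$ through \eqref{eCAMHsigma}. Tensoring with $X_P^{\mathcal A}$ commutes with direct sums of Hilbert modules, so
\[\Ind_P^G\mathcal H_{M,\disc}\cong\bigoplus_\sigma\Ind_P^G\mathcal H_{M,\sigma}.\]
Under this isomorphism, the left action of $a\in C^*_{\mathcal A}(G(\mathbb A))$ is the diagonal operator $\bigoplus_\sigma\Phi_{M,\sigma}(a)$. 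Here $\Phi_{M,\sigma}(a)$ is the restriction of $a$ to the invariant summand, and it is compact by Lemma \ref{lCAGdiscCPT}.

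\textbf{Step 2 (decay).} By Lemma \ref{lCAGdiscCPT}, the diagonal operator lies in $\mathcal K_{C_0(\widehat A)}(\Ind_P^G\mathcal H_{M,\disc})$. Lemma \ref{lHmod0} then gives that $\#\{\sigma:\|\Phi_{M,\sigma}(a)\|\ge\varepsilon\}<\infty$ for every $\varepsilon>0$. To pass from individual $\sigma$ to classes $[M,\sigma]$, we choose one representative $\sigma$ in each $W(G,M)$-orbit. Since the orbits are finite, the sub-family indexed by representatives is still $c_0$.

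It remains to check that the choice of representative and of $P$ is immaterial. For $w\in W(G,M)$, the operator $U_w$ of \eqref{enormintwop}, built from Lemma \ref{lM}, maps $\Ind_P^G\mathcal H_{M,\sigma}$ onto $\Ind_P^G\mathcal H_{M,{}^w\sigma}$ and intertwines the $G(\mathbb A)$-actions. It is twisted by the automorphism of $C_0(\widehat A)$ induced by $w$, so it conjugates $\Phi_{M,\sigma}$ to $\Phi_{M,{}^w\sigma}$ and in particular preserves norms. Each $\Phi_{M,\sigma}$ is a $*$-homomorphism, being the restriction of the left action $C^*_{\mathcal A}(G)\to\mathcal L(\Ind_P^G\mathcal H_{M,\disc})$ to a reducing summand. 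Hence $\Phi(a)$ is a well-defined element of the $c_0$-direct sum $\bigoplus_{[M,\sigma]}C^*_{\mathcal A}(G(\mathbb A))_{M,\sigma}$, and $\Phi$ is a $*$-homomorphism.

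\textbf{Main obstacle.} I expect the main obstacle to be the identification in Step 1. It requires that the interior tensor product $X_P^{\mathcal A}\otimes_{C^*_{\mathcal A}(M)}(-)$ commutes with infinite Hilbert-module direct sums, and that the left action is genuinely diagonal with respect to this splitting. I would first prove it on the dense subspace of finite sums, using that the projections onto $\mathcal H_{M,\sigma}$ are $C^*_{\mathcal A}(M)$-linear adjointable maps and that $X_P^{\mathcal A}\otimes(-)$ is functorial for such maps. The identity then extends by continuity, using the explicit model of Remark \ref{rXidX}.
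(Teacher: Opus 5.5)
Your proposal is correct and follows the paper's own argument. The ingredients are the same: compactness of the diagonal action of $C^*_{\mathcal A}(G(\mathbb A))$ on $\Ind_P^G\mathcal H_{M,\disc}$ (Lemma~\ref{lCAGdiscCPT}), $c_0$-decay of the diagonal blocks (Lemma~\ref{lHmod0}), and finiteness of the set of associate classes of Levi subgroups. Beyond this, you make explicit two points the paper leaves implicit: the splitting $\Ind_P^G\mathcal H_{M,\disc}\cong\bigoplus_\sigma\Ind_P^G\mathcal H_{M,\sigma}$ with diagonal left action, and the passage from individual $\sigma$ to associate classes via norm-preserving conjugation by $U_w$.
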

\begin{proof}
Take $f\in C^*_{\mathcal{A}}(G(\mathbb{A}))$. 
By Lemma \ref{lCAGdiscCPT} and Lemma \ref{lHmod0}, we have $\|
\Phi_{M,\sigma}(f)
\|
\longrightarrow 0$ when $\sigma$ is outside a sufficiently large finite set $J\subset \Pi_{\disc}(M^1)$.   
Therefore the family $\left(
\Phi_{M,\sigma}(f)
\right)_{\sigma\in\Pi_{\disc}(M^1)}$
belongs to the $C^*$-algebraic direct sum $\bigoplus_{\sigma\in\Pi_{\disc}(M^1)}C^*_{\mathcal A}(G(\mathbb{A}))_{M,\sigma}
$. 
The conclusion then follows from the fact that there are only finitely many equivalence classes of Levi subgroups. 
\end{proof}

\begin{proposition}\label{pinjPhi}
The homomorphism $\Phi$ above 
is injective. 
\end{proposition}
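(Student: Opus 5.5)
The plan is to show that $\ker\Phi=\bigcap_{[M,\sigma]}\ker\Phi_{M,\sigma}$ is zero. Since $C^*_{\mathcal A}(G(\mathbb{A}))$ is by definition the image of $R_G$, it suffices to show the following: if $a\in C^*(G(\mathbb{A}))$ satisfies $\Phi_{M,\sigma}(a)=0$ for every discrete Levi datum, then $R_G(a)=0$ on $L^2(G(\mathbb{Q})\backslash G(\mathbb{A}))$. The bridge between the module side and the Hilbert space side is the spectral decomposition \eqref{edecMdisc}. That decomposition writes $R_G$ as a direct sum over $[M,\sigma]\in\mathfrak D(G)$ of the $W(G,M,\sigma)$-invariant parts of the direct integrals $\int^\oplus_{i\mathfrak a_M^*}\mathcal M_M(\sigma)\widehat\otimes\Ind_{P(\mathbb A)}^{G(\mathbb A)}(\sigma_\lambda)\,d\lambda$.

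The first step is to identify the fibers of the Hilbert $C_0(\widehat A)$-module $\Ind_P^G\mathcal H_{M,\sigma}$. For each $\lambda\in i\mathfrak a_M^*$, I would localize at the character ideal of $C_0(\widehat A)$, i.e. apply $\otimes_{C_0(\widehat A)}\mathbb C_\lambda$. By Corollary \ref{cPindtensor} together with \eqref{eCAMHsigma}, this should give
\begin{equation*}
\bigl(\Ind_P^G\mathcal H_{M,\sigma}\bigr)\otimes_{C_0(\widehat A)}\mathbb C_\lambda\cong \Ind_{P(\mathbb A)}^{G(\mathbb A)}(H_{M,\sigma}\otimes\chi_\lambda)\cong \mathcal M_M(\sigma)\widehat\otimes \Ind_{P(\mathbb A)}^{G(\mathbb A)}(\sigma_\lambda),
\end{equation*}
and the action of $a$ on this fiber should be $I_{P,\sigma}(\lambda,a)$. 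Because $C_0(\widehat A)$ is commutative, a compact operator $T\in\mathcal K_{C_0(\widehat A)}(\Ind_P^G\mathcal H_{M,\sigma})$ vanishes exactly when all of its fiber operators $T_\lambda$ vanish. Hence $\Phi_{M,\sigma}(a)=0$ forces $I_{P,\sigma}(\lambda,a)=0$ for every $\lambda$. I want this for every $\lambda$, not merely almost every $\lambda$; for that I would use continuity of $\lambda\mapsto I_{P,\sigma}(\lambda,a)$ in norm, which holds on the dense subalgebra $C_c(G(\mathbb A))$ and then passes to the closure.

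The second step transfers this to $R_G$. Under the unitary isomorphism \eqref{edecMdisc}, $R_G(a)$ acts on the $[M,\sigma]$-summand by the decomposable operator $F\mapsto\bigl(\lambda\mapsto I_{P,\sigma}(\lambda,a)F(\lambda)\bigr)$, restricted to $W(G,M,\sigma)$-invariant sections. This restriction is well defined by Lemma \ref{lM}(1), which says the $U_w$ commute with the $G(\mathbb A)$-action. If every fiber operator vanishes, each summand operator vanishes, and so $R_G(a)=0$ on the Hilbert direct sum. Finiteness of the set of Levi classes is not needed here: orthogonality of the summands in \eqref{edecMdisc} already suffices.

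I expect the first step to be the main obstacle. The difficulty is making the fiber identification rigorous, namely checking that the interior tensor product $X_P^{\mathcal A}\otimes C_0(\widehat A,H_{M,\sigma})$ localized at $\lambda$ really is the induced representation $\Ind_P^G(\sigma_\lambda)$. I would attempt this with the explicit model in Remark \ref{rXidX}: elements of the localized module restrict to $K$ and lie in $L^2(K,H_{M,\sigma})$, and the inner product $\int_K F_1(k)^*F_2(k)\,dk$ becomes the standard induced inner product after evaluating at $\lambda$. A second point to handle is the identification of representatives inside an associate class. For this I would use the fact that replacing $(M,\sigma)$ by $(wM,{}^w\sigma)$ changes the module by an isomorphism intertwining the $G$-actions, given by $M(w,\lambda)$ and Lemma \ref{lM}(3), so that $\ker\Phi_{M,\sigma}$ depends only on the class $[M,\sigma]$.
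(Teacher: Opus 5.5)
Your proposal is correct and follows the same route as the paper: you decompose $R_G$ via \eqref{edecMdisc} into $[M,\sigma]$-summands, show that $R_G(a)$ vanishes on each summand when every $\Phi_{M,\sigma}(a)=0$, and conclude from the faithfulness of $R_G$ on its image $C^*_{\mathcal A}(G(\mathbb A))$. Your fiberwise identification (tensoring with $\mathbb C_\lambda$, as in Lemma \ref{lCAGfactor}) makes explicit the step $\Phi_{M,\sigma}(a)=0\Rightarrow R_{M,\sigma}(a)=0$, which the paper asserts without argument; the norm-continuity detour is unnecessary, since evaluating at each character of $C_0(\widehat A)$ already gives vanishing at every $\lambda$, and a direct integral only needs almost every $\lambda$.
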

\begin{proof}
By \eqref{edecMdisc}, we have the decomposition $R_G=\bigoplus_{[M,\sigma]}R_{M,\sigma}$. 
Suppose that $\Phi(f)=0$ for $f\in C^*_{\mathcal A}(G(\mathbb{A}))$. 
Then, by definition, $\Phi_{M,\sigma}(f)=0$ for every representative of an associate class of pairs $(M,\sigma)$.
Hence each $R_{M,\sigma}(f)$ vanishes and $R_G(f)=\widehat{\bigoplus}_{[M,\sigma]}
R_{M,\sigma}(f)=0$. 
Since $C^*_{\mathcal A}(G(\mathbb{A}))=R_G(C^*(G(\mathbb{A})))$, 
the representation $R_G$ is faithful on
$C^*_{\mathcal A}(G(\mathbb{A}))$. 
So $f=0$ and $\Phi$ is injective.
\end{proof}

We let $C^*_{\mathcal{A}}(G(\mathbb{A}))_{M,\sigma}\subset \mathcal{K}_{C_0(\widehat{A})}\bigl(\Ind_P^G \mathcal{H}_{M,\sigma}\bigr)$ 
be the image of $\Phi_{M,\sigma}$. 
We may view the representations of $C^*_{\mathcal{A}}(G(\mathbb{A}))_{M,\sigma}$ as the representations of $C^*_{\mathcal A}(G(\mathbb{A}))$ which factor through $\Phi_{M,\sigma}$. 

The following result is motivated by the real tempered case \cite[Lemma 5.10]{CCH2016}. 
\begin{lemma}\label{lCAGfactor}
The irreducible representations of $C^*_{\mathcal{A}}(G(\mathbb{A}))_{M,\sigma}$ are precisely the irreducible representations of $C^*_{\mathcal A}(G(\mathbb{A}))$, which are the irreducible constituents of $\Ind_{P}^{G}(\sigma\otimes\lambda)$ as $\lambda$ ranges over $\widehat{A}$. 
\end{lemma}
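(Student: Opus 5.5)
The plan is to identify the irreducible representations of $\mathcal K_{C_0(\widehat A)}(\mathcal E)$, with $\mathcal E:=\Ind_P^G\mathcal H_{M,\sigma}$, and then transport them to the subalgebra $C^*_{\mathcal A}(G(\mathbb A))_{M,\sigma}$. Here $\mathcal E$ is a Hilbert $C_0(\widehat A)$-module, so $\mathcal K_{C_0(\widehat A)}(\mathcal E)$ is a $C_0(\widehat A)$-algebra. Its fiber at $\lambda\in\widehat A\simeq i\mathfrak a_M^*$ is $\mathcal K(\mathcal E_\lambda)$, where
\begin{center}
$\mathcal E_\lambda=\mathcal E\otimes_{C_0(\widehat A)}\mathbb C_\lambda\cong X_P^{\mathcal A}\otimes_{C^*_{\mathcal A}(M)}(H_{M,\sigma}\otimes\chi_\lambda)\cong \mathcal M_M(\sigma)\widehat\otimes\Ind_{P(\mathbb A)}^{G(\mathbb A)}(\sigma_\lambda)$.
\end{center}
This isomorphism follows from Corollary \ref{cPindtensor} together with the explicit action \eqref{eCAMHsigma} of Lemma \ref{lCAMCAM1}. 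Every irreducible representation of the subalgebra $B:=C^*_{\mathcal A}(G(\mathbb A))_{M,\sigma}$ is the restriction of an irreducible representation of the ambient algebra, or at least is contained in one. I would argue this via the standard extension of irreducible representations of a $C^*$-subalgebra: pure states extend to pure states.

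The next step is to note that $B$ contains $C_0(\widehat A)$-central elements acting nondegenerately. More simply, since $\Phi_{M,\sigma}$ is $C_0(\widehat A)$-linear in the sense that it commutes with the central $C_0(\widehat A)$-action, each irreducible representation $\rho$ of $\mathcal K_{C_0(\widehat A)}(\mathcal E)$ factors through evaluation at a single point $\lambda$. This uses Schur's lemma applied to the central action on the nondegenerate part. Concretely, $\rho$ factors through $\mathrm{ev}_\lambda:\mathcal K_{C_0(\widehat A)}(\mathcal E)\to\mathcal K(\mathcal E_\lambda)$, which is surjective, so $\rho$ is the identity representation of $\mathcal K(\mathcal E_\lambda)$. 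Composing with $\Phi_{M,\sigma}$, the representation $\rho\circ\Phi_{M,\sigma}$ of $C^*_{\mathcal A}(G(\mathbb A))$ is the representation on $\mathcal E_\lambda$, namely $m(\sigma)$ copies of $\Ind_P^G(\sigma_\lambda)$. Each irreducible representation of $B$ is then a subrepresentation of this representation restricted to $B$. By Lemma \ref{ladeleGCCR} and the compactness in Corollary \ref{cCAGcptHsigma}, $\Phi_{M,\sigma}(a)(\lambda)$ is compact, so the image of $B$ at $\lambda$ is a $C^*$-algebra of compact operators. The representation on $\mathcal E_\lambda$ therefore decomposes as a direct sum of irreducibles, and every irreducible representation of $B$ occurring there is an irreducible constituent of $\Ind_P^G(\sigma\otimes\lambda)$.

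Conversely, take any irreducible constituent $\pi$ of $\Ind_P^G(\sigma\otimes\lambda)$. It is a subrepresentation of the compact-operator representation $a\mapsto\Phi_{M,\sigma}(a)(\lambda)$, so it factors through $\mathrm{ev}_\lambda\circ\Phi_{M,\sigma}$ and hence through $B$. It is also a representation of $C^*_{\mathcal A}(G(\mathbb A))$, that is, automorphic, by Proposition \ref{pGviaGA}.

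The main obstacle is the first direction: showing that every irreducible representation of the subalgebra $B$, not just of the ambient compact-operator algebra, is concentrated at a single $\lambda$. I would handle it by describing $B$ as a $C^*$-algebra of sections. The kernel of any irreducible $\rho$ of $B$ is a primitive ideal, and $B$ is CCR because it is a subalgebra of a continuous-trace type algebra with compact fibers. Hence $\ker\rho\supseteq\ker(\mathrm{ev}_\lambda|_B)$ for some $\lambda$, which I would obtain by checking that the ideals $\ker(\mathrm{ev}_\lambda|_B)$, $\lambda\in\widehat A$, have zero intersection and that the primitive ideal space maps continuously onto a closed subset of $\widehat A$ via the Dauns–Hofmann theorem applied to $C_0(\widehat A)$ acting through the center of the multiplier algebra of $B$. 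Once $\rho$ factors through the compact algebra $\mathrm{ev}_\lambda(B)$, it is a subrepresentation of $\mathcal E_\lambda$ and the argument above applies.
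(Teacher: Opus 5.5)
Your argument is correct and follows the paper's route. The paper likewise uses Dixmier's Proposition 2.10.2 (i.e.\ pure-state extension) to place every irreducible representation of $C^*_{\mathcal A}(G(\mathbb A))_{M,\sigma}$ inside the restriction of an irreducible representation of $\mathcal K_{C_0(\widehat A)}(\Ind_P^G\mathcal H_{M,\sigma})$. It identifies the latter with evaluation at a point $\lambda$ and reads off the constituents of $\Ind_P^G(\sigma\otimes\lambda)$ on the fiber; your tracking of $\mathcal M_M(\sigma)$ and of the converse direction is more careful than the paper's.

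That same step already disposes of what you call the main obstacle, so your last paragraph is unnecessary, and as stated it would not work. First, $C_0(\widehat A)$ acts centrally only on the ambient algebra, not on your $B$, which sits in the $W(G,M,\sigma)$-fixed algebra and is not stable under multiplication by non-invariant functions. Second, zero intersection of the ideals $\ker(\mathrm{ev}_\lambda|_B)$ only makes the corresponding points dense in $\mathrm{Prim}(B)$, not equal to it.
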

\begin{proof}
By \cite[Proposition 2.10.2]{DiCalg}, for a $C^*$-subalgebra $B_0$ of a $C^*$-algebra $B$, the irreducible representations of $B_0$ are precisely the restrictions of irreducible representations of $B$ to a $B_0$-invariant, $B_0$-irreducible subspace. 

Now we take $B_0=C^*_{\mathcal{A}}(G(\mathbb{A}))_{M,\sigma}$ and $B=\mathcal{K}_{C_0(\widehat{A})}\bigl(\Ind_P^G \mathcal{H}_{M,\sigma}\bigr)$. 
Note that the irreducible representations of $B$ are given by evaluation at $\lambda\in \widehat{A}$ on the Hilbert space $\Ind_P^G H_{\sigma\otimes\lambda}$. 
Let $\mathbb{C}_{\lambda}$ denote the space for the one-dimensional representation of $A$ given by $\lambda$. 
We have $\Ind_{P}^{G}\mathcal{H}_{M,\sigma}\otimes_{C_{0}(\widehat{A})}\mathbb{C}_{\lambda}\cong\Ind_{P}^{G}H_{\sigma\otimes\lambda}$, whose restriction to $B_0$ is exactly the irreducible constituents of $\Ind_{P}^{G}(\sigma\otimes\lambda)$. 
\end{proof}

\subsection{Intertwining operators and the fixed-point subalgebras}
\label{sssubGlobalWeylFixedTarget}

We take a standard parabolic subgroup $P=M_PN_P=MN$ of $G$.
Let $A=A_M(\mathbb R)^0$ and identify $\widehat A$ with $i\mathfrak{a}_M^*$ as above. 
Take $\sigma\in\Pi_{\disc}\bigl(M(\mathbb A)^1\bigr)$ and consider the $\sigma$-isotypic subspace $H_{M,\sigma}=L^2_{\disc}
\bigl(M(\mathbb{Q})\backslash M(\mathbb A)^1\bigr)[\sigma]$ of the full discrete automorphic spectrum. 
Recall that $\mathcal H_{M,\sigma}=C_0(\widehat{A},H_{M,\sigma})$. 
We use the following notation and facts. 
\begin{itemize}
\item Recall that $I_{P}(\lambda,-)=$ the induced representation $\Ind_{P(\mathbb A)}^{G(\mathbb A)}(R_{M,\disc,\lambda}\otimes 1_{N(\mathbb{A})})$ of $G(\mathbb A)$, whose underlying space, denoted by $H_P$, is independent of $\lambda$.

\item $R_{M,\disc,\sigma,\lambda}=$ the representation of $M^1(\mathbb{A})$ on $H_{M,\sigma}$;

\item $I_{P,\sigma}(\lambda,-)=$ the induced representation $\Ind_{P(\mathbb A)}^{G(\mathbb A)}(R_{M,\disc,\sigma,\lambda}\otimes 1_{N(\mathbb{A})})$ of $G(\mathbb A)$;

\item $H_{P,\sigma}=$ the underlying space of $\Ind_{P(\mathbb A)}^{G(\mathbb A)}(R_{M,\disc,\sigma,\lambda}\otimes 1_{N(\mathbb{A})})$, which is independent of $\lambda$;

\item $\mathcal{E}_{M,\sigma}:=
\Ind_{P(\mathbb A)}^{G(\mathbb A)}
\mathcal H_{M,\sigma}=\Ind_{P(\mathbb A)}^{G(\mathbb A)}\bigl(C_{0}(\widehat{A},H_{M,\sigma})\bigr)$;

\item $W(G,M,\sigma):=
\{w\in W(G,M):{}^w\sigma\simeq\sigma\}$. 
\end{itemize}
Note that
\begin{equation}
    \mathcal {E}_{M,\sigma}\cong C_{0}(\widehat{A},\Ind_{P(\mathbb A)}^{G(\mathbb A)}H_{M,\sigma})
\end{equation}
as a Hilbert $(C^*_{\mathcal{A}}(G),C_0(\widehat{A}))$-correspondence. 
The fiber at each $\lambda\in \widehat{A}$ carries the induced representation $\Ind_{P(\mathbb A)}^{G(\mathbb A)}(R_{M,\disc,\sigma}\otimes e^{\lambda})$ while the underlying space is independent of $\lambda$.  

Following \cite[\S 7]{Art05}, 
we shall use the global intertwining operators
\begin{center}
    $M(w,\lambda):H_P\longrightarrow H_P,$
\end{center}
for $w\in W(G,M_P)$ (see Lemma \ref{lM}). 
This is the specialization of the proceding construction to $P'=P$ within the notation there. 
By Lemma \ref{lM}, they satisfy
\begin{equation}
\label{eGlobalMintertwine}
M(w,\lambda)I_P(\lambda,g)
=
I_P(w\lambda,g)M(w,\lambda)
\end{equation}
for $g\in G(\mathbb A)$, and
\begin{equation}
\label{eGlobalMfunctional}
M(w_1w_2,\lambda)
=
M(w_1,w_2\lambda)M(w_2,\lambda).
\end{equation}
Moreover, for $\lambda\in i\mathfrak{a}_M^*$, the operator
$M(w,\lambda)$ extends to a unitary operator in $U(H_P)$.

For $s\in W(\mathfrak{a}_P,\mathfrak{a}_{P})$ and the corresponding $w\in W(G,M_P)$, we know that $M(w,\lambda)$ intertwines the representations $I_{P,\sigma}(\lambda,-)$ and $I_{P,{}^w\sigma}(w\lambda,-)$ (see \cite[\S 15, page 86]{Art05}). 
Thus, for $w\in W(G,M,\sigma)$ and $\lambda\in i\mathfrak{a}_M^*$, we have a unitary operator $M(w,\lambda)\colon H_{P,\sigma}\to H_{P,\sigma}$
such that
\begin{equation}\label{eMsigma}
    M(w,\lambda)I_{P,\sigma}(\lambda,g)=I_{P,\sigma}(w\lambda,g) M(w,\lambda)
\end{equation}
for $g\in G(\mathbb{A})$. 

For $w\in W(G,M)$, we have an automorphism $\alpha_w:C_0(\widehat{A})\longrightarrow C_0(\widehat{A})$ given by 
\begin{center}
$\alpha_w(f)(\lambda)=f(w^{-1}\lambda)$. 
\end{center}
We show that the normalized intertwining operators $U_w$ (see \eqref{enormintwop}) give a well-defined action of $W(G,M,\sigma)$. 
\begin{proposition}
\label{pGlobalWeylAction}
For every $w\in W(G,M,\sigma)$, 
we have an
$\alpha_w$-twisted unitary automorphism $U_w\colon \mathcal{E}_{M,\sigma}\to
\mathcal{E}_{M,\sigma}$ given by
\begin{equation}\label{eUw}
    (U_wF)(\lambda)=M(w,w^{-1}\lambda)F(w^{-1}\lambda)
\end{equation}
for $F\in \mathcal{E}_{M,\sigma}$, 
which commutes with the $G(\mathbb A)$-action on $\mathcal E_{M,\sigma}$. 
This gives an action $\beta$ of $W(G,M,\sigma)$ on $\mathcal{K}_{C_0(\widehat{A})}(\mathcal E_{M,\sigma})$ given by $\beta_w(T)=U_{w}TU_{w}^{-1}$.
\end{proposition}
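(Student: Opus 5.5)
We will verify the claims directly from the properties of $M(w,\lambda)$ recorded in Lemma \ref{lM}, \eqref{eGlobalMintertwine}, \eqref{eGlobalMfunctional} and \eqref{eMsigma}, using the realization $\mathcal{E}_{M,\sigma}\cong C_0(\widehat{A},H_{P,\sigma})$. Here $H_{P,\sigma}=\Ind_{P(\mathbb A)}^{G(\mathbb A)}H_{M,\sigma}$ is a fixed Hilbert space, the $C_0(\widehat A)$-valued inner product is $\langle F_1,F_2\rangle(\lambda)=\langle F_1(\lambda),F_2(\lambda)\rangle_{H_{P,\sigma}}$, and $G(\mathbb A)$ acts fiberwise by $(gF)(\lambda)=I_{P,\sigma}(\lambda,g)F(\lambda)$.

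\textbf{Step 1: $U_w$ is well defined on $\mathcal{E}_{M,\sigma}$.} For $w\in W(G,M,\sigma)$, \eqref{eMsigma} shows that $M(w,\mu)$ preserves $H_{P,\sigma}$ and is unitary for $\mu\in i\mathfrak a_M^*$. Since $\lambda\mapsto w^{-1}\lambda$ is a linear isometry of $i\mathfrak a_M^*$, the function $\lambda\mapsto F(w^{-1}\lambda)$ is continuous and vanishes at infinity. By Lemma \ref{lM}(3), $\mu\mapsto M(w,\mu)$ is analytic on $i\mathfrak a_M^*$, hence strongly continuous. A family of unitaries that is strongly continuous, applied to a norm-continuous vector function, yields a norm-continuous function; uniform boundedness (norm one) preserves vanishing at infinity. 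Hence $U_wF\in C_0(\widehat A,H_{P,\sigma})$. This continuity step is the main technical point, and it is where I would be most careful. Lemma \ref{lM}(3) is formulated for $M(w,\lambda)$ on $K$-finite vectors and then extended by unitarity, so strong continuity on all of $H_{P,\sigma}$ follows by an $\varepsilon/3$ argument. That argument combines density of $K$-finite vectors, analyticity of $\mu\mapsto M(w,\mu)v$ for each $K$-finite $v$ in a finite-dimensional $K$-type, and the uniform bound $\|M(w,\mu)\|=1$.

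\textbf{Step 2: twisted module properties.} For $f\in C_0(\widehat A)$ we compute
\[
(U_w(Ff))(\lambda)=M(w,w^{-1}\lambda)F(w^{-1}\lambda)f(w^{-1}\lambda)=(U_wF)(\lambda)\,\alpha_w(f)(\lambda).
\]
By fiberwise unitarity,
\[
\langle U_wF_1,U_wF_2\rangle(\lambda)=\langle F_1(w^{-1}\lambda),F_2(w^{-1}\lambda)\rangle=\alpha_w(\langle F_1,F_2\rangle)(\lambda).
\]
This gives \eqref{etwistcover}. Bijectivity follows by exhibiting the inverse $(VF)(\lambda)=M(w,\lambda)^{-1}F(w\lambda)$, which is continuous by the same argument as in Step 1. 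Equivariance under $G(\mathbb A)$ follows from \eqref{eMsigma} with $\lambda$ replaced by $w^{-1}\lambda$:
\[
(U_w gF)(\lambda)=M(w,w^{-1}\lambda)I_{P,\sigma}(w^{-1}\lambda,g)F(w^{-1}\lambda)=I_{P,\sigma}(\lambda,g)(U_wF)(\lambda).
\]
Integrating against $f\in C_c(G(\mathbb A))$ and extending by density and continuity, $U_w$ commutes with the left action of $C^*_{\mathcal A}(G(\mathbb A))$.

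\textbf{Step 3: the group law.} From \eqref{eGlobalMfunctional},
\[
(U_{w_1}U_{w_2}F)(\lambda)=M(w_1,w_1^{-1}\lambda)M(w_2,w_2^{-1}w_1^{-1}\lambda)F(w_2^{-1}w_1^{-1}\lambda).
\]
Since $w_2\bigl(w_2^{-1}w_1^{-1}\lambda\bigr)=w_1^{-1}\lambda$, this product equals $M(w_1w_2,(w_1w_2)^{-1}\lambda)F((w_1w_2)^{-1}\lambda)=(U_{w_1w_2}F)(\lambda)$. Also $U_e=\mathrm{id}$ because $M(e,\lambda)=1$. Hence $w\mapsto U_w$ is a homomorphism.

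\textbf{Step 4: the induced action on compact operators.} For a twisted unitary automorphism $U$, the operator $\beta_w(T)=U_wTU_w^{-1}$ is $C_0(\widehat A)$-linear, since the twists cancel. On rank-one operators,
\[
U_w\theta_{\xi,\eta}U_w^{-1}(x)=U_w\bigl(\xi\langle\eta,U_w^{-1}x\rangle\bigr)=(U_w\xi)\,\alpha_w\bigl(\langle\eta,U_w^{-1}x\rangle\bigr)=(U_w\xi)\langle U_w\eta,x\rangle=\theta_{U_w\xi,U_w\eta}(x).
\]
So $\beta_w$ preserves $\mathcal{K}_{C_0(\widehat A)}(\mathcal E_{M,\sigma})$. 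It is a $*$-automorphism with adjoint compatibility coming from the inner-product identity, and Step 3 gives $\beta_{w_1w_2}=\beta_{w_1}\beta_{w_2}$.
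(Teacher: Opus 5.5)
Your proposal is correct and follows essentially the same route as the paper. Both arguments proceed through the same steps:
\begin{enumerate}
\item well-definedness on $C_0(\widehat A,H_{P,\sigma})$;
\item strong continuity of $\lambda\mapsto M(w,\lambda)$, by density of $K$-finite vectors and unitarity;
\item the twisted module and inner-product identities;
\item the group law from the functional equation;
\item $G(\mathbb A)$-equivariance from the intertwining property;
\item the identity $U_w\theta_{\xi,\eta}U_w^{-1}=\theta_{U_w\xi,U_w\eta}$ for the action on compact operators.
\end{enumerate}
Your explicit inverse and the extension of equivariance to the integrated $C^*$-action are harmless additions; the paper instead obtains bijectivity from $U_w^{-1}=U_{w^{-1}}$.
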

\begin{proof}
Given $F\in C_0(\widehat{A},H_{P,\sigma})$,  
we have $M(w,w^{-1}\lambda)F(w^{-1}\lambda)\in H_{P,\sigma}$ by \eqref{eMsigma}. 
As $w\colon \widehat{A}\to \widehat{A}$ is a homeomorphism, the right-hand side vanishes at
infinity and $U_wF\in C_0(\widehat{A},H_{P,\sigma})$. 

\begin{enumerate}
\item The map $\lambda\longmapsto
M(w,\lambda)|_{H_{P,\sigma}}$ is strongly continuous. 
We take $\xi\in H_P$ and $\xi_{k}\in H_P^0$ (the $K$-finite vectors) such that $\|\xi-\xi_{k}\|\to 0$ as $H_P^0$ is dense in $H_P$. 
For $w\in W(G,M,\sigma)$, we know that $M(w,\lambda)$  
is analytic, hence continuous on $i\mathfrak{a}_M^*$ (see \cite[Theorem 7.2(a)]{Art05}). 
Let $\lambda_n\to\lambda$. By the unitarity of $M(w,\lambda)$, we have 
\begin{equation*}
\begin{aligned}
&\|M(w,\lambda_n)\xi-M(w,\lambda)\xi\|\\
\leq &
\|M(w,\lambda_n)(\xi-\xi_k)\|
+
\|M(w,\lambda_n)\xi_k-M(w,\lambda)\xi_k\|
+
\|M(w,\lambda)(\xi_k-\xi)\|
\\
\leq &
2\|\xi-\xi_k\|
+
\|M(w,\lambda_n)\xi_k-M(w,\lambda)\xi_k\|\to 0.
\end{aligned}
\end{equation*}
This proves the strong continuity.   
In particular, the restriction $\lambda\longmapsto
M(w,\lambda)|_{H_{P,\sigma}}$ is strongly continuous.

\item We show that $U_wF$ is continuous.  If $\lambda_n\to\lambda$, then we have
\begin{equation*}
 \begin{aligned}
&\|(U_wF)(\lambda_n)-(U_wF)(\lambda)\|\\
\leq &
\|F(w^{-1}\lambda_n)-F(w^{-1}\lambda)\|
+
\left\|
\left(
M(w,w^{-1}\lambda_n)
-
M(w,w^{-1}\lambda)
\right)
F(w^{-1}\lambda)
\right\|.
\end{aligned}   
\end{equation*}
The first term tends to zero by continuity of $F$, and the second tends
to zero by strong continuity of the intertwining operators.  
Hence $U_wF$ is continuous.

\item $U_w$ is an $\alpha$-twisted unitary automorphism in the sense of \eqref{etwistcover}. 
For $f\in C_0(\widehat{A})$, we have
\begin{equation*}
 \begin{aligned}
(U_w(Ff))(\lambda)
=
M(w,w^{-1}\lambda)
F(w^{-1}\lambda)
f(w^{-1}\lambda)
=
(U_wF)(\lambda)\alpha_w(f)(\lambda),
\end{aligned}   
\end{equation*}
and hence $U_w(Ff)=U_w(F)\alpha_w(f)$. 
Similarly, the unitarity of $M(w,w^{-1}\lambda)$ gives
\begin{center}
    $\langle U_wF_1,U_wF_2\rangle(\lambda)
=
\left\langle
F_1(w^{-1}\lambda),
F_2(w^{-1}\lambda)
\right\rangle=
\alpha_w
\bigl(
\langle F_1,F_2\rangle
\bigr)(\lambda)$. 
\end{center}

\item $w\mapsto U_w$ gives a group homomorphism.   
Let $w_1,w_2\in W(G,M,\sigma)$.  By definition, we have
\begin{equation*}
\begin{aligned}
(U_{w_1}U_{w_2}F)(\lambda)
&=
M(w_1,w_1^{-1}\lambda)
(U_{w_2}F)(w_1^{-1}\lambda)
\\
&=
M(w_1,w_1^{-1}\lambda)
M(w_2,w_2^{-1}w_1^{-1}\lambda)
F(w_2^{-1}w_1^{-1}\lambda).
\end{aligned}   
\end{equation*}
Set $\mu=(w_1w_2)^{-1}\lambda=
w_2^{-1}w_1^{-1}\lambda$. We have $w_2\mu=w_1^{-1}\lambda$. 
By the functional equation in Lemma \ref{lM}, we have
\begin{equation*}
 \begin{aligned}
(U_{w_1}U_{w_2}F)(\lambda)
=
M\bigl(
w_1w_2,(w_1w_2)^{-1}\lambda
\bigr)
F\bigl(
(w_1w_2)^{-1}\lambda
\bigr)=
(U_{w_1w_2}F)(\lambda).
\end{aligned}   
\end{equation*}
Thus $w\mapsto U_w$ gives a homomorphism.  Since $M(1,\lambda)=1$, we have
$U_1=1$ and $U_w^{-1}=U_{w^{-1}}$. 
Consequently, $w\mapsto U_w$ defines an action of $W(G,M,\sigma)$.

\item $U_w$ commutes with the $G(\mathbb A)$-action.
Given $F\in C_0(\widehat{A},H_{P,\sigma})$, we have $(\Pi(g)F)(\lambda)=
I_P(\lambda,g)F(\lambda)$ on each fiber. 
By Lemma \ref{lM}, we know $M(w,w^{-1}\lambda)
I_P(w^{-1}\lambda,g)=I_P(\lambda,g)
M(w,w^{-1}\lambda)$ and therefore
\begin{equation*}
    \begin{aligned}
(U_w(gF))(\lambda)
&=
M(w,w^{-1}\lambda)
I_P(w^{-1}\lambda,g)
F(w^{-1}\lambda)
\\
&=
I_P(\lambda,g)
M(w,w^{-1}\lambda)
F(w^{-1}\lambda)
=
(g(U_{w}F))(\lambda).
\end{aligned}
\end{equation*}

\item $\beta_w\in \Aut
\left(
\mathcal K_{C_0(\widehat{A})}(\mathcal E_{M,\sigma})
\right)$. 
We may start with a rank-one operator $\theta_{\xi,\eta}(\zeta)=
\xi\langle\eta,\zeta\rangle$,  
where we have $U_w\theta_{\xi,\eta}U_w^{-1}=
\theta_{U_w\xi,U_w\eta}$. 
Hence $\beta_w$ preserves the compact operators. 
\end{enumerate}
Thus we obtain a $G(\mathbb{A})$-intertwining action $\beta$ of $W(G,M,\sigma)$ on $\mathcal{K}_{C_0(\widehat{A})}(\mathcal E_{M,\sigma})$.
\end{proof}

\begin{corollary}
\label{cintocpminv}
$C^{*}_{\mathcal A}(G(\mathbb A))_{M,\sigma}
\subset 
\mathcal K_{C_0(\widehat A)}
\left(
\Ind_P^G\mathcal{H}_{M,\sigma}
\right)^{W(G,M,\sigma)}$. 
\end{corollary}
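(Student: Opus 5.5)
To prove Corollary \ref{cintocpminv}, I would show that for every $a\in C^*_{\mathcal A}(G(\mathbb A))$ and every $w\in W(G,M,\sigma)$ one has $\beta_w(\Phi_{M,\sigma}(a))=\Phi_{M,\sigma}(a)$. Equivalently, $U_w\Phi_{M,\sigma}(a)=\Phi_{M,\sigma}(a)U_w$ as operators on $\mathcal E_{M,\sigma}$. By Corollary \ref{cCAGcptHsigma} and Lemma \ref{lCAGdiscCPT} we already know $\Phi_{M,\sigma}(a)\in\mathcal K_{C_0(\widehat A)}(\mathcal E_{M,\sigma})$. By Proposition \ref{pGlobalWeylAction}, $\beta_w$ is an automorphism of this algebra. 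So only the commutation relation needs to be checked.

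The first step is the case of a group element. Proposition \ref{pGlobalWeylAction}(5) gives $U_w\Pi(g)=\Pi(g)U_w$ for $g\in G(\mathbb A)$, where $(\Pi(g)F)(\lambda)=I_P(\lambda,g)F(\lambda)$. Under the identification $\mathcal E_{M,\sigma}\cong C_0(\widehat A,\Ind_P^G H_{M,\sigma})$, the left action of $C^*(G(\mathbb A))$ on $X^{\mathcal A}_P\otimes\mathcal H_{M,\sigma}$ is the integrated form of the unitary representation $\Pi$, acting fiberwise by $I_{P,\sigma}(\lambda,\cdot)$. I would check this on the dense model $X_P^{\mathcal A,0}$ of Remark \ref{rXidX}, where $G(\mathbb A)$ acts by right translation.

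The second step is to integrate. For $f\in C_c(G(\mathbb A))$ we have $\Phi_{M,\sigma}(f)=\int_{G(\mathbb A)} f(g)\,\Pi(g)\,dg$, a strictly convergent integral in $\mathcal L_{C_0(\widehat A)}(\mathcal E_{M,\sigma})$. Since $U_w$ is a bounded, complex-linear map, it can be moved inside the integral. Applied to a vector $F$, this gives
\begin{equation*}
U_w\Phi_{M,\sigma}(f)F=\int f(g)\,U_w\Pi(g)F\,dg=\int f(g)\,\Pi(g)U_wF\,dg=\Phi_{M,\sigma}(f)U_wF.
\end{equation*}
One subtlety is that $U_w$ is $\alpha_w$-twisted rather than $C_0(\widehat A)$-linear. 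The integral, however, is a Bochner integral of $\mathcal E_{M,\sigma}$-valued continuous functions, and $U_w$ is a norm-preserving linear map. By \eqref{etwistcover} and the isometry of $\alpha_w$ we have $\|U_wF\|=\|F\|$, so interchanging $U_w$ with the integral is legitimate.

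The third step is to pass to the closure. The map $a\mapsto U_w\Phi_{M,\sigma}(a)U_w^{-1}-\Phi_{M,\sigma}(a)$ is norm continuous, and $C_c(G(\mathbb A))$ has dense image in $C^*_{\mathcal A}(G(\mathbb A))$. Hence the identity extends to all of $C^*_{\mathcal A}(G(\mathbb A))$, and therefore $\Phi_{M,\sigma}(a)$ lies in the fixed-point algebra.

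The main obstacle is the first step. There one must make sure that the abstract left action on $X_P^{\mathcal A}\otimes_{C^*_{\mathcal A}(M)}\mathcal H_{M,\sigma}$ really coincides fiberwise with $I_{P,\sigma}(\lambda,\cdot)$ on $H_{P,\sigma}$. This is the same space on which the $M(w,\lambda)$ act, so the identification is what allows Proposition \ref{pGlobalWeylAction}(5) to apply. I would verify it through the fiber isomorphism $\mathcal E_{M,\sigma}\otimes_{C_0(\widehat A)}\mathbb C_\lambda\cong\Ind_P^G H_{\sigma\otimes\lambda}$ used in Lemma \ref{lCAGfactor}. Together with continuity in $\lambda$, this shows that two adjointable operators agreeing on every fiber are equal.
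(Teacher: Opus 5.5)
Your proof is correct and follows the paper's own route. The paper gives no separate proof of this corollary: it derives it directly from Proposition~\ref{pGlobalWeylAction}, whose item~(5) shows that $U_w$ commutes with the $G(\mathbb A)$-action on $\mathcal E_{M,\sigma}$, so $\beta_w$ fixes the image of $\Phi_{M,\sigma}$; your integration over $C_c(G(\mathbb A))$ and the density argument just make explicit the passage from group elements to all of $C^*_{\mathcal A}(G(\mathbb A))$ that the paper leaves implicit.
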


Combining Corollary \ref{cintocpminv} with Proposition \ref{pinjPhi}, we reach our main theorem.
\begin{theorem}\label{tmain1}
For a reductive group $G$, there is an injective $*$-homomorphism of $C^*$-algebras
\begin{equation}\label{emain1}
    C^*_{\mathcal{A}}(G(\mathbb{A}))\xrightarrow[]{}\bigoplus_{[M,\sigma]} \mathcal{K}_{C_{0}(\widehat{A_{M}})}(\Ind_{P}^{G}C_{0}(\widehat{A_{M}},H_{M,\sigma}))^{W(G,M,\sigma)},
\end{equation}
where the direct sum is taken over associate classes of the discrete Levi data of $G$. 
\end{theorem}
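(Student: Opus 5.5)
The map will be $\Phi=\bigoplus_{[M,\sigma]}\Phi_{M,\sigma}$ from Proposition \ref{pweldefhom}, with one representative $(M,\sigma)$ fixed in each associate class and the standard parabolic $P=P_M$ chosen. The proof then has three parts.

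\emph{Step 1: each component lands in the fixed-point algebra.} By Lemma \ref{lCAGdiscCPT}, $\Phi_{M,\sigma}$ takes values in $\mathcal K_{C_0(\widehat{A_M})}(\mathcal E_{M,\sigma})$. Proposition \ref{pGlobalWeylAction} shows that each $U_w$, for $w\in W(G,M,\sigma)$, commutes with the $G(\mathbb A)$-action on $\mathcal E_{M,\sigma}$. Integrating against $f\in C_c(G(\mathbb A))$ gives $U_w\Phi_{M,\sigma}(f)U_w^{-1}=\Phi_{M,\sigma}(f)$. This uses that the left action on $\mathcal E_{M,\sigma}\cong C_0(\widehat A,H_{P,\sigma})$ is fibrewise $F\mapsto I_{P,\sigma}(\lambda,f)F(\lambda)$, and that the integral commutes with the bounded operator $U_w$. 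By density and norm continuity of $\beta_w$, the identity passes to all of $C^*_{\mathcal A}(G(\mathbb A))$. This is Corollary \ref{cintocpminv}: $\beta_w\circ\Phi_{M,\sigma}=\Phi_{M,\sigma}$, so the image lies in $\mathcal K_{C_0(\widehat{A_M})}(\mathcal E_{M,\sigma})^{W(G,M,\sigma)}$, which is a $C^*$-subalgebra because each $\beta_w$ is a $*$-automorphism.

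\emph{Step 2: the sum is a $c_0$-direct sum.} Proposition \ref{pweldefhom} gives $\|\Phi_{M,\sigma}(a)\|\to0$ as $[M,\sigma]\to\infty$; this comes from Lemma \ref{lHmod0} applied to the diagonal compact operator on $\Ind_P^G\mathcal H_{M,\disc}=\bigoplus_\sigma \Ind_P^G\mathcal H_{M,\sigma}$, together with the finiteness of the set of classes $[M]$. Combined with Step 1, $\Phi$ is a $*$-homomorphism into the $C^*$-direct sum of the fixed-point algebras. I must also check that the choice of representative within an associate class is harmless. For $\sigma'={}^w\sigma$, the operator $U_w$ gives a twisted unitary isomorphism $\mathcal E_{M,\sigma}\to\mathcal E_{M,\sigma'}$ intertwining both the $G(\mathbb A)$-actions and the stabilizer actions, so the resulting components are isomorphic.

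\emph{Step 3: injectivity.} This is Proposition \ref{pinjPhi}. The main obstacle is making rigorous that $\Phi_{M,\sigma}(a)=0$ forces $a$ to vanish on the summand of \eqref{edecMdisc} indexed by $[M,\sigma]$. To do this, I would note that the summand $\bigl[\int^\oplus \mathcal M_M(\sigma)\widehat\otimes\Ind(\sigma_\lambda)\,d\lambda\bigr]^{W(G,M,\sigma)}$ embeds $G(\mathbb A)$-equivariantly into $\int^\oplus_{i\mathfrak a_M^*}H_{P,\sigma}\,d\lambda$. This $L^2$ direct integral is the completion of $\mathcal E_{M,\sigma}\otimes_{C_0(\widehat A)}L^2(\widehat A)$, on which $a$ acts by $\Phi_{M,\sigma}(a)\otimes1$. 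Hence vanishing of every component forces $R_G(a)=0$ on every summand of \eqref{edecMdisc}. Since $R_G$ is faithful on $C^*_{\mathcal A}(G(\mathbb A))$, it follows that $a=0$.
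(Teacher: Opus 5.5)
Your proposal is correct and follows the paper's route: the paper proves Theorem~\ref{tmain1} by combining Proposition~\ref{pweldefhom} (the $c_0$-direct sum), Corollary~\ref{cintocpminv} (the image lies in the $W(G,M,\sigma)$-fixed points because $U_w$ commutes with the $G(\mathbb A)$-action) and Proposition~\ref{pinjPhi} (injectivity from \eqref{edecMdisc} and faithfulness of $R_G$), exactly as in your three steps. Two points the paper leaves implicit are made explicit in your version: identifying $\int^\oplus H_{P,\sigma}\,d\lambda$ with $\mathcal E_{M,\sigma}\otimes_{C_0(\widehat A)}L^2(\widehat A)$, which justifies passing from $\Phi_{M,\sigma}(a)=0$ to the vanishing of $R_{M,\sigma}(a)$, and checking independence of the representative in each associate class.
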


\section{The isomorphism theorem for $\GL(n)$}\label{sisoGLn}


Throughout this section, let $G=\GL(n)$.  Our goal is to prove that the homomorphism 
\begin{center}
    $C^{*}_{\mathcal{A}}(G(\mathbb{A}))\xrightarrow[]{\oplus \Phi_{M,\sigma}} 
\bigoplus_{[M,\sigma]}
C^*_{\mathcal A}(G(\mathbb{A}))_{M,\sigma}\to \bigoplus_{[M,\sigma]}\mathcal K_{C_0(\widehat{A_M})}
\left(
\Ind_P^G\mathcal{H}_{M,\sigma}
\right)^{W(G,M,\sigma)}$
\end{center}
is surjective, and hence an isomorphism of $C^*$-algebras by Proposition \ref{pinjPhi} and Corollary \ref{cintocpminv}. 

We first recall the automorphic representation theory of $\GL(n,\mathbb{A})$. 
The following result is due to Moeglin and Waldspurger (see \cite{MW1989}). 
\begin{theorem}\label{tMWGLndisc} 
The irreducible subrepresentations of  $L_{\mathrm{disc}}^{2}
\bigl(\GL(n,F)\backslash \GL(n,\mathbb{A})^{1}\bigr)$ have multiplicity one and are parametrized by pairs $(\sigma,p)$ where
\begin{itemize}
    \item $n=kp$ is divisible by $k$,
    \item $\sigma$ is an irreducible unitary cuspidal automorphic representation of $\GL(k,\mathbb{A})$. 
\end{itemize}
Suppose that $\pi$ is such an irreducible representation parameterized by $(\sigma,p)$ and $P$ is the standard parabolic subgroup of $\GL(n)$ of type
$(k,\ldots,k)$, and $\rho_{\sigma}$ is the non-tempered representation given by
\begin{center}
$(\sigma\otimes\cdots\otimes\sigma)\cdot\delta_{P}^{1/2}
  \colon m \longmapsto
  \sigma(m_{1})\lvert\det m_{1}\rvert^{\frac{p-1}{2}}
  \otimes\cdots\otimes
  \sigma(m_{p})\lvert\det m_{p}\rvert^{-\frac{p-1}{2}}$
\end{center}
of $M_{P}(\mathbb{A})\cong \prod_{j=1}^{p}\GL(k,\mathbb{A})$, 
then $\pi$ is the unique irreducible quotient of the induced
representation $\Ind_{P(\mathbb{A})}^{\GL(n,\mathbb{A})}(\rho_{\sigma})$.
\end{theorem}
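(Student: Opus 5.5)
The plan follows Moeglin--Waldspurger's residue method: realize every discrete automorphic form as an iterated residue of cuspidal Eisenstein series, and control the poles through the normalizing factors of the intertwining operators $M(w,\lambda)$ of Lemma \ref{lM}. For $\GL(n)$ these factors are ratios of Rankin--Selberg $L$-functions $L(s,\sigma_i\times\sigma_j^\vee)$. The analytic input is the Jacquet--Shalika theory \cite{JacShlk1981}: $L(s,\sigma\times\sigma'^\vee)$ is entire unless $\sigma'\simeq\sigma\otimes|\det|^{it}$, it has at most a simple pole at $s=1$ (after the shift), and it is nonvanishing on $\mathrm{Re}\,s=1$. By Theorem \ref{tLldspara} and the decomposition \eqref{eautodecP}, every discrete summand of $L^2(\GL(n,F)\backslash\GL(n,\mathbb A)^1)$ lies in some $L^2_{\mathcal P}$. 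It is therefore spanned by residues, taken at a point $\lambda_0$, of Eisenstein series $E(x,\phi,\lambda)$ with $\phi$ in a cuspidal representation $\tau_1\otimes\cdots\otimes\tau_r$ of a Levi subgroup $M_Q$.

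\emph{Existence.} Take $P$ of type $(k,\dots,k)$, $\phi\in\sigma^{\otimes p}$, and $\lambda_0=\bigl(\tfrac{p-1}{2},\tfrac{p-3}{2},\dots,-\tfrac{p-1}{2}\bigr)$. I would show that the iterated residue of $E(\phi,\lambda)$ along the hyperplanes $\lambda_i-\lambda_{i+1}=1$ is nonzero. The consecutive factors $L(\lambda_i-\lambda_{i+1},\sigma\times\sigma^\vee)$ produce simple poles there, and no zeros cancel them, by nonvanishing on $\mathrm{Re}\,s\geq 1$. Next, I would compute the constant term of the residue along every standard parabolic. Only the longest Weyl element contributes, with exponent $w_0\lambda_0$. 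This exponent satisfies Langlands' square-integrability criterion, since it is strictly negative on the positive chamber, so the residue is square integrable. Its image is the image of $M(w_0,\lambda_0)$ on $\Ind_{P}^{G}(\rho_\sigma)$. Locally at each place this is the standard Langlands-quotient intertwining operator, so the image is the unique irreducible quotient $\pi$. This gives both irreducibility and the identification asserted in the statement.

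\emph{Exhaustion.} This is the step I expect to be the main obstacle. Given a discrete summand with cuspidal datum $(M_Q,\tau_1\otimes\cdots\otimes\tau_r,\lambda_0)$, I would argue by induction on $n$ that the datum must be a single ``segment'' of the form above. The mechanism is to study the constant terms $E_{Q'}$ as sums over $W(\mathfrak a_Q,\mathfrak a_{Q'})$ of $M(w,\lambda)\phi\, e^{\langle w\lambda,H\rangle}$. Square-integrability forces every surviving exponent $\mathrm{Re}\,w\lambda_0$ into the negative cone. A singular hyperplane can only come from a pair $\tau_i\simeq\tau_j\otimes|\det|^{it}$ with $\lambda_i-\lambda_j=1$, and residues of the unnormalized local operators do not contribute new poles. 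Combining these facts, the $\tau_i$ must all be twists of a single $\sigma$, which can be normalized away on $\GL(n,\mathbb A)^1$, and $\lambda_0$ must be the regular segment. The delicate part is the cancellation of the many residues from different Weyl elements. Here I would follow MW's argument: first take residues in one variable, obtaining residual forms on a smaller Levi subgroup, then apply the induction hypothesis to that Levi subgroup.

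\emph{Multiplicity one and parametrization.} Suppose two discrete summands are abstractly isomorphic. Their local components agree, and by the existence and exhaustion steps each determines its cuspidal support $\sigma^{\otimes p}$ together with the exponent $\lambda_0$. By Jacquet--Shalika strong multiplicity one for isobaric sums, the almost-everywhere local data recover $(\sigma,p)$ uniquely. For fixed $(\sigma,p)$, the residual space constructed above is the image of a single irreducible quotient, so it occurs exactly once. Hence the map $(\sigma,p)\mapsto\pi$ is a bijection onto the discrete spectrum, and every discrete summand has multiplicity one.
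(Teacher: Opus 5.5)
The paper does not prove this theorem; it quotes it from Moeglin--Waldspurger \cite{MW1989}. Your outline sketches their residue method, and for exhaustion you defer to their argument, so there you rely on the same source as the paper. The genuine gap is in your multiplicity-one paragraph, which claims more than your ingredients give.

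\textbf{Missing input: multiplicity one for cusp forms.} Jacquet--Shalika is a statement about isomorphism classes. It only shows that distinct parameters $(\sigma,p)$ give non-isomorphic representations. For fixed $(\sigma,p)$ you argue that ``the residual space \dots\ is the image of a single irreducible quotient, so it occurs exactly once.'' This says nothing when $p=1$: there is no residue, and the claim is exactly that a cuspidal $\sigma$ occurs once in $L^2_{\cusp}\bigl(\GL(n,F)\backslash\GL(n,\mathbb A)^1\bigr)$. The Eisenstein-series machinery cannot yield this.

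For $p>1$ the same fact is used silently when you write $\phi\in\sigma^{\otimes p}$ as though this were a single space. Suppose $\sigma$ occurred $m$ times in the cuspidal spectrum of $\GL(k)$. Then the cusp forms of type $\sigma^{\otimes p}$ on $M_P$ would carry a multiplicity space $V^{\otimes p}$ with $\dim V=m$. The operator $M(w_0,\lambda)$ acts on it invertibly, by permuting tensor factors, so the residues would give $m^p$ copies of $\pi$.

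The required ingredient is the Shalika--Piatetski-Shapiro multiplicity-one theorem for cuspidal representations of $\GL$. Its proof uses the Fourier--Whittaker expansion of cusp forms and uniqueness of local Whittaker models. Moeglin--Waldspurger take it as input, and so must you.

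Two smaller steps also need repair. First, ``therefore spanned by residues'' of cuspidal Eisenstein series does not follow from Theorem \ref{tLldspara} and \eqref{eautodecP}. The former only gives subquotients of cuspidally induced representations. In the latter, the discrete spectrum of $G$ is just the summand $\mathcal P=\{G\}$, so it says nothing about residues. You need Langlands' theorem that $L^2_{\disc}$ is spanned by iterated residues of cuspidal Eisenstein series.

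Second, in the existence step the local components $\sigma_v$ need not be tempered. So $\Ind(\sigma_v|\det|^{(p-1)/2}\otimes\cdots\otimes\sigma_v|\det|^{-(p-1)/2})$ is not a priori a standard module, and the residue of $M(w_0,\lambda)$ is not automatically a Langlands-quotient operator. Before concluding that its image is the unique irreducible quotient, you need two facts. One is the bound $|\mathrm{Re}\,\alpha|<\tfrac12$ on the exponents of the unitary generic $\sigma_v$, which lets you rearrange the induced representation into a standard module. The other is holomorphy and non-vanishing of the local normalized operators at $\lambda_0$.
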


For simplicity, we define the {\it Speh representation} of type $(\sigma,p)$, denoted by $\Speh(\sigma,p)$, to be a discrete automorphic representation with the parameter $(\sigma,p)$ in Theorem \ref{tMWGLndisc}, which is the unique irreducible quotient of
\begin{center}
$\Ind_{P(\mathbb{A})}^{G(\mathbb{A})}\bigl(
\sigma|\det(\cdot)|^{(p-1)/2}\otimes
\sigma|\det(\cdot)|^{(p-3)/2}\otimes
\cdots
\sigma|\det(\cdot)|^{(1-p)/2}
\bigr)$,
\end{center}
where the induction is taken to be normalized (see \cite[\S 10.8]{Getz300}).

Recall that for $\sigma\in\Pi_{\disc}\bigl(M(\mathbb A)^1\bigr)$ and $\lambda\in i\mathfrak{a}_M^*$, we let $\sigma_{\lambda}=\sigma\otimes e^{\langle \lambda,H_M(-)\rangle}$ and write $\Ind_{P(\mathbb A)}^{G(\mathbb A)}(\sigma_\lambda)$ simply as $\Ind_{P}^{G}(\sigma_\lambda)$. 
The following result is well known. 
We include a short proof for completeness.
\begin{lemma}\label{lGLnautoirrep}
Let $P=MN$ be  a parabolic subgroup of $\GL(n)$. 
For an irreducible unitary discrete automorphic representation $\sigma$ of $M(\mathbb{A})^1$ and $\lambda\in i\mathfrak{a}_M^*$, $\Ind_P^G(\sigma_{\lambda})$ is irreducible.    
\end{lemma}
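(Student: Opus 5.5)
Since $\Ind_P^G(\sigma_\lambda)$ is a restricted tensor product of local induced representations, I will reduce to irreducibility of each local factor $\Ind_{P(F_v)}^{G(F_v)}(\sigma_{v}\otimes|\cdot|^{\lambda})$, and then invoke the local unitary-dual theory for $\GL(n)$.

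First, write $M\cong\prod_{i=1}^r\GL(n_i)$. A discrete automorphic representation of $M(\mathbb A)^1$ is, up to the central character on $A_M(\mathbb R)^0$, a tensor product $\sigma=\sigma_1\otimes\cdots\otimes\sigma_r$ with each $\sigma_i\in\Pi_{\disc}(\GL(n_i))$. By Theorem~\ref{tMWGLndisc}, $\sigma_i=\Speh(\tau_i,p_i)$ for a unitary cuspidal $\tau_i$ of $\GL(k_i,\mathbb A)$ with $n_i=k_ip_i$. The twist $\sigma_\lambda$ with $\lambda\in i\mathfrak a_M^*$ is $\bigotimes_i \sigma_i|\det|^{\lambda_i}$ with $\lambda_i\in i\mathbb R$, so each factor is again unitary. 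Taking the Flath decomposition (as in the proof of Lemma~\ref{ladeleGCCR}), I get
\[
\Ind_P^G(\sigma_\lambda)\cong{\bigotimes_v}'\,\Ind_{P(F_v)}^{G(F_v)}\bigl(\sigma_{1,v}|\det|_v^{\lambda_1}\otimes\cdots\otimes\sigma_{r,v}|\det|_v^{\lambda_r}\bigr).
\]
A restricted tensor product of irreducible unitary representations, with spherical vectors at almost all places, is irreducible. It therefore suffices to show that every local factor is irreducible.

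For the local step, I will use that $\Speh(\tau,p)_v$ is the local Speh representation $u(\tau_v,p)$, meaning the Langlands quotient at $v$ of the corresponding induced representation. This follows because the global Speh representation is the unique quotient of the global induced representation in Theorem~\ref{tMWGLndisc} and its local components are the local Langlands quotients. Each $\tau_v$ is generic unitary, hence a unitarily (and possibly complementary-series) induced product of twisted discrete series. I will then invoke Tadi\'c's classification for $p$-adic $\GL(n)$ and Vogan's classification for real and complex $\GL(n)$. In either theory, parabolic induction of a tensor product of unitary representations from the class built out of the $u(\delta,p)$ and their complementary-series deformations remains irreducible. This is Bernstein's irreducibility theorem for unitary parabolic induction in the $p$-adic case, and the analogous result of Vogan/Baruch at archimedean places. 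Applied to the local components above, this gives irreducibility of each local factor.

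The main obstacle is justifying the local irreducibility uniformly, in particular when $\tau_v$ is not tempered. Ramanujan is unknown, so the local components $\tau_v$ may lie in complementary series with exponents below $1/2$ (Luo--Rudnick--Sarnak bounds). I expect to handle this with the fact that Tadi\'c's irreducibility statement applies to all unitary representations, not just tempered ones: the general theorem is that $\Ind(\pi_1\otimes\cdots\otimes\pi_r)$ is irreducible whenever all $\pi_i$ are unitary, which is Bernstein's result for $p$-adic fields and Baruch's (following Vogan) for archimedean fields. If quoting that general theorem is acceptable, the proof reduces to the tensor-product bookkeeping above.
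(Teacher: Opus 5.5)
Your proposal is correct and follows the same route as the paper. You factor $\Ind_P^G(\sigma_\lambda)$ as a restricted tensor product of local induced representations and invoke irreducibility of parabolic induction from irreducible unitary representations at every place: Bernstein \cite{Bernstein1984} at finite places, and at archimedean places the result you attribute to Vogan/Baruch, for which the paper cites \cite[\S 13]{BaRe10}. The detour through the local Speh structure and possibly non-tempered $\tau_v$ is unnecessary, because that general theorem applies directly to the unitary local components $\sigma_{i,v}|\det|_v^{\lambda_i}$, as you yourself conclude in your last paragraph.
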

\begin{proof}
We have $\Ind_{P(\mathbb{A})}^{G(\mathbb{A})}(\sigma_{\lambda})=\otimes'\Ind_{P(F_v)}^{G(F_v)}(\sigma_{\lambda})_v$. 
Then it follows from the fact that each local factor $\Ind_{P(F_v)}^{G(F_v)}(\sigma_{\lambda})_v$ is irreducible (see \cite[\S 13]{BaRe10} for archimedean places and \cite{Bernstein1984} for nonarchimedean places). 
\end{proof}

We now recall the discrete and full automorphic spectra of $\GL(n)$. 
Following \cite[\S 30, page 238]{Art05}, we let $\Psi_2(\GL(n))$ be the set of formal tensor products
\begin{center}
    $\psi=\sigma\boxtimes \nu_p$,
\end{center}
where $\sigma\in \Pi_{\cusp}(\GL(k,\mathbb{A})^1)$ with $n=kp$ and $\nu_p$ denotes the unique irreducible $p$-dimensional representation of $\SL(2,\mathbb{C})$. 
By Theorem \ref{tMWGLndisc} (and also \cite{JacShlk1981}), there is an association
\begin{equation}\label{eMWdiscGLn}
    \psi=\sigma\boxtimes \nu_p\longmapsto \Ind_P^G\bigl((\sigma\otimes\cdots\otimes\sigma)\cdot\delta_{P}^{1/2}\bigr).
\end{equation}
If we denote the unique irreducible quotient of $\Ind_P^G\bigl((\sigma\otimes\cdots\otimes\sigma)\cdot\delta_{P}^{1/2}\bigr)$ by $\pi_{\psi}$,
then
\begin{equation}\label{epsipipsi}
    \psi\mapsto \pi_{\psi}
\end{equation}
gives a bijection between $\Psi_2(\GL(n))$ and $\Pi_{\disc}(\GL(n,\mathbb{A})^1)$.

For the entire automorphic spectrum of $\GL(n)$ besides the discrete one, 
we let $\Psi(\GL(n))$ be the set of formal unordered direct sums
\begin{equation}\label{eArthurautopara}
    \psi=l_1\psi_1\boxplus\cdots\boxplus l_r\psi_r
\end{equation}
with $l_j$ being positive integers and $\psi_j=\sigma_j\boxtimes \nu_{p_j}\in \Psi_2(\GL(n_j))$ such that
\begin{center}
    $n=\sum_{j=1}^{r}l_j n_j=\sum_{j=1}^{r}l_j m_j p_j$. 
\end{center}
We let $P$ be the standard parabolic with its Levi component
\begin{center}
    $M=\prod_{j=1}^{r} \GL(n_j)^{l_j}$. 
\end{center}
We form the corresponding induced representation
\begin{equation}\label{eArthurasso}
    \pi_{\psi}:=\Ind_{P}^{G}\bigl(\otimes \pi_{\psi_j}^{\otimes l_j} \bigr)
\end{equation}
with $\pi_{\psi_j}$ defined in \eqref{epsipipsi}. 
By Lemma \ref{lGLnautoirrep}, $\pi_\psi$ is irreducible.  
Actually, the map 
\begin{center}
    $\psi\mapsto \pi_{\psi}$
\end{center}
gives a bijection from $\Psi(\GL(n))$ to $\Pi_{\mathcal{A}}(\GL(n,\mathbb{A}))$ (see \cite[Theorem 4.4]{JacShlk1981} and \cite[\S 30]{Art05}).

\subsection{Disjointness and surjectivity for $\GL(n)$}\label{ssGLnsurj}

We review some explicit results on the discrete automorphic representations of $\GL(n,\mathbb{A})$. 
Let $n=md$ and $P$ be a standard parabolic subgroup of $\GL(n)$ of type $(d,\cdots,d)$. 
Then the Levi factor $M_P$ is given as
\begin{center}
    $M_P\cong \prod_{k=1}^{m}\GL(d)$. 
\end{center}
Suppose $\delta$ is an irreducible cuspidal representation of $\GL(d,\mathbb{A})$. 
We take $\lambda=(\lambda_1,\cdots,\lambda_m)\in \mathfrak{a}_{M,\mathbb{C}}^{*}\cong \mathbb{C}^{m}$. 

The following result is due to \cite[Theorem 4.4]{JacShlk1981}. 

\begin{theorem}\label{tGLncuspdisj}
For $i=1,2$, let $P_i=M_i N_i$ be a parabolic subgroup of $\GL(n)$, $\lambda_i\in \mathfrak{a}_{M_i,\mathbb{C}}^*$ and $\delta_i\in \Pi_{\cusp}(M_{i}(\mathbb{A}))$. 
Suppose $\pi$ is an irreducible automorphic representation of $\GL(n,\mathbb{A})$ which is a
common irreducible constituent of $\Ind_{P_1(\mathbb{A})}^{\GL(n,\mathbb{A})}
\bigl(\delta_1\otimes e^{\lambda_1}\bigr)$ and $\Ind_{P_2(\mathbb{A})}^{\GL(n,\mathbb{A})}\bigl(\delta_2\otimes e^{\lambda_2}\bigr)$. 
Then $M_1,M_2$ are $\GL(n)$-conjugate and $(\delta_1,\lambda_1),(\delta_2,\lambda_2)$ are identical up to permutations of their factors. 
\end{theorem}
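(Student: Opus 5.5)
We reduce Theorem \ref{tGLncuspdisj} to the local theory at the unramified places, combined with strong multiplicity one for cuspidal data. Write $M_i\cong\prod_{j}\GL(n_{i,j})$ and $\delta_i=\otimes_j\delta_{i,j}$. Twisting each block by $|\det|^{\lambda_{i,j}}$ lets us absorb $\lambda_i$ into the cuspidal data, so we may assume $\lambda_i=0$ and $\delta_{i,j}$ cuspidal but not necessarily unitary. The claim then becomes: the multisets $\{\delta_{1,j}\}$ and $\{\delta_{2,j}\}$ coincide. Conjugacy of $M_1$ and $M_2$ follows at once, since the block sizes are then the same up to order.

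The first step is to choose a finite set $S$ of places, containing the archimedean ones, outside of which all $\delta_{i,j}$ and $\pi$ are unramified. For $v\notin S$, the representation $\pi_v$ is the unique unramified constituent of both local induced representations. By the Satake isomorphism, its Satake parameter, a semisimple conjugacy class in $\GL(n,\mathbb C)$, equals the concatenation of the Satake parameters of the $\delta_{1,j,v}$. It also equals the concatenation of those of the $\delta_{2,j,v}$. Consequently, for all $v\notin S$, the products of the $L$-factors agree:
\begin{center}
$\prod_j L_v(s,\delta_{1,j,v}\times\tau_v)=\prod_j L_v(s,\delta_{2,j,v}\times\tau_v)$
\end{center}
for every cuspidal $\tau$ of any $\GL(m,\mathbb A)$.

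The second step uses the analytic properties of Rankin--Selberg $L$-functions due to Jacquet--Piatetski-Shapiro--Shalika and Shahidi. Take $\tau=\tilde\delta_{1,1}$. The partial $L$-function $L^S(s,\delta\times\tilde\delta_{1,1})$ for cuspidal $\delta$ has a pole at $s=1$ if and only if $\delta\simeq\delta_{1,1}$, counting the $|\det|^{\lambda}$ twist as part of $\delta$. It is also nonvanishing on $\mathrm{Re}\,s=1$. Comparing the order of the pole at $s=1$ on both sides gives two conclusions. First, $\delta_{1,1}$ occurs among the $\delta_{2,j}$. Second, it occurs there with the same multiplicity as among the $\delta_{1,j}$. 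Removing all copies of $\delta_{1,1}$ from both sides and inducting on the number of factors then yields equality of the multisets.

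The main obstacle is the nonvanishing and holomorphy input on the line $\mathrm{Re}\,s=1$, together with the twists $\lambda$, which are not unitary. The pole of $L^S(s,\delta\times\tilde\tau)$ occurs at $s=1$ only after the unitary normalizations are matched. So I would first normalize each $\delta_{i,j}$ as a unitary cuspidal $\delta^0_{i,j}$ times $|\det|^{\mu_{i,j}}$ with $\mu_{i,j}\in\mathbb C$. I would then compare poles of $L^S(s,\delta_1\times\tilde\delta^0)$, where the pole sits at $s=1-\mu$, by working on each vertical line separately. This requires knowing that such partial $L$-functions have no zeros on the relevant lines. That is exactly the Jacquet--Shalika nonvanishing theorem, and it is the heart of \cite[Theorem 4.4]{JacShlk1981}, which I would invoke rather than reprove.
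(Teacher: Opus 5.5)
Your outline is the Jacquet--Shalika argument. The paper gives no proof of its own and simply cites \cite[Theorem 4.4]{JacShlk1981}, so ending with that citation is the paper's route. Read as a self-contained argument, though, your second step has a concrete gap. Your last paragraph spots it but does not close it.

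Write $\delta_{i,j}=\delta^0_{i,j}|\det|^{\mu_{i,j}}$ with $\delta^0_{i,j}$ unitary cuspidal and $\mu_{i,j}\in\mathbb R$, absorbing imaginary parts into $\delta^0_{i,j}$. Then $L^S(s,\delta_{i,j}\times\tilde\delta^0)=L^S(s+\mu_{i,j},\delta^0_{i,j}\times\tilde\delta^0)$. The available inputs are absolute convergence for real part $>1$ and the nonvanishing theorem on $\operatorname{Re}s=1$. They control this factor only where $\operatorname{Re}(s)+\mu_{i,j}\ge 1$. Suppose you test with $\tau=\tilde\delta^0_{1,1}$ at $s=1-\mu_{1,1}$, with $\delta_{1,1}$ chosen arbitrarily. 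Then every block with $\mu_{i,j}<\mu_{1,1}$, on either side, is evaluated strictly inside the critical strip. There it may have a zero that cancels the pole you are counting, so pole orders need not equal multiplicities. Working ``on each vertical line separately'' does not help, because these evaluation points are not on $\operatorname{Re}s=1$ at all.

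The missing idea is an ordering of the exponents. Let $\mu$ be the minimum of all the $\mu_{i,j}$ over both data. Choose $\delta^0$ occurring with exponent $\mu$, and test against $\tau=\tilde\delta^0$ at $s=1-\mu$. Every factor on both sides is then evaluated in the closed half-plane $\operatorname{Re}\ge 1$:
\begin{itemize}
\item blocks with $\mu_{i,j}>\mu$ land in the region of absolute convergence, so they are holomorphic and nonzero;
\item blocks with $\mu_{i,j}=\mu$ land on $\operatorname{Re}=1$, and by the nonvanishing theorem they are holomorphic and nonzero unless $\delta^0_{i,j}\simeq\delta^0$, in which case there is a simple pole.
\end{itemize}
The pole orders therefore give equal multiplicities of $\delta^0|\det|^{\mu}$ on the two sides. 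After deleting these blocks, the unramified Satake multisets still agree at every $v\notin S$. You can then induct on the number of blocks, which amounts to processing exponents in increasing order. With this ordering, your sketch becomes a complete proof modulo the Jacquet--Shalika/Shahidi nonvanishing of $L^S(s,\delta^0\times\tilde\delta'^0)$ on $\operatorname{Re}s=1$.
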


We give the disjointness of the discrete Levi datum.  
\begin{proposition}\label{pGLndiscdisj}
For $i=1,2$, let $P_i=M_i N_i$ be a parabolic subgroup of $\GL(n)$, $\lambda_i\in i\mathfrak{a}_{M_i}^*$ and $\sigma_i\in \Pi_{\disc}(M_{i}(\mathbb{A})^{1})$. 
If $\Ind_{P_1(\mathbb{A})}^{\GL(n,\mathbb{A})}
\bigl(\sigma_1\otimes e^{\lambda_1}\bigr)$ and $\Ind_{P_2(\mathbb{A})}^{\GL(n,\mathbb{A})}\bigl(\sigma_2\otimes e^{\lambda_2}\bigr)$ share  a common irreducible constituent,  
then there exists $w\in W$ such that 
\begin{center}
    $wM_1=M_2$, $w\sigma_1\cong \sigma_2$ and $w\lambda_1=\lambda_2$. 
\end{center}
\end{proposition}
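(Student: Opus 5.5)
The plan is to reduce the statement about discrete Levi data to the cuspidal rigidity of Theorem \ref{tGLncuspdisj}, using the Moeglin--Waldspurger description (Theorem \ref{tMWGLndisc}) to unwind each $\sigma_i$ into cuspidal data. The key tool for getting back from cuspidal data to discrete data is Lemma \ref{lGLnautoirrep}.

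\textbf{Step 1: cuspidal support of each induced representation.}
Write $M_i\cong\prod_j \GL(n_{ij})$ and decompose
\[
\sigma_i\otimes e^{\lambda_i}=\bigotimes_j \Speh(\tau_{ij},p_{ij})\otimes|\det|^{\lambda_{ij}},
\]
with $\tau_{ij}$ unitary cuspidal on $\GL(k_{ij})$, $n_{ij}=k_{ij}p_{ij}$ and $\lambda_{ij}\in i\mathbb R$. Each factor is a quotient of the normalized induction of
\[
\tau_{ij}|\det|^{\lambda_{ij}+(p_{ij}-1)/2}\otimes\cdots\otimes\tau_{ij}|\det|^{\lambda_{ij}-(p_{ij}-1)/2}.
\]
By transitivity of induction (as in Example \ref{exmptransind}), $\Ind_{P_i}^G(\sigma_i\otimes e^{\lambda_i})$ is a constituent of $\Ind_{Q_i}^G(\delta_i\otimes e^{\mu_i})$. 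Here $Q_i\subset P_i$ has Levi $\prod_j\GL(k_{ij})^{p_{ij}}$, $\delta_i=\bigotimes_j\tau_{ij}^{\otimes p_{ij}}$, and $\mu_i$ collects the segment exponents $\lambda_{ij}+(p_{ij}-1)/2-l$ for $l=0,\dots,p_{ij}-1$. Lemma \ref{lGLnautoirrep} makes $\Ind_{P_i}^G(\sigma_i\otimes e^{\lambda_i})$ irreducible, so a common constituent $\pi$ means the two induced representations are isomorphic. Then $\pi$ is a common constituent of $\Ind_{Q_1}^G(\delta_1 e^{\mu_1})$ and $\Ind_{Q_2}^G(\delta_2 e^{\mu_2})$.

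\textbf{Step 2: equality of multisets of segments.}
Theorem \ref{tGLncuspdisj} shows that the multisets $\{(\tau,\mu)\}$ of cuspidal data with exponents coincide. We then have to reconstruct the multiset of segments
\[
\Delta_{ij}=\bigl(\tau_{ij},\,\lambda_{ij}+\tfrac{p_{ij}-1}{2},\dots,\lambda_{ij}-\tfrac{p_{ij}-1}{2}\bigr)
\]
from the resulting multiset of pairs $(\tau,\text{exponent})$. This is the main obstacle, because a union of segments does not in general determine the segments.

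The reconstruction should work here because every segment is centered at a purely imaginary point $\lambda_{ij}$ and is symmetric about it. Group the cuspidal data by the pair $(\tau,\operatorname{Im}\mu)$. Within one group, all segments share the same imaginary center, and the real parts form a multiset that is a sum of symmetric strings $\{(p-1)/2,\dots,-(p-1)/2\}$. Such a multiset determines the $p$'s by induction: the largest real part $(p_{\max}-1)/2$ forces a segment of length $p_{\max}$, which we remove and then repeat. One must be careful to identify $\tau|\det|^{it}$ with twists correctly; I would normalize $\tau_{ij}$ as in Theorem \ref{tMWGLndisc}, with central character trivial on $A$, so that the imaginary shift is absorbed into $\lambda$. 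As a fallback, one can compare the local components at almost all unramified places (Satake parameters), exactly as in the proof of \cite[Theorem 4.4]{JacShlk1981}.

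\textbf{Step 3: translating back to Levi data.}
Equality of the multisets of segments $\{\Delta_{1j}\}=\{\Delta_{2j}\}$ gives a bijection of the blocks of $M_1$ and $M_2$ that preserves $(\tau_{ij},p_{ij},\lambda_{ij})$. The corresponding block permutation is realized by some $w\in W$ with $wM_1=M_2$. Since $\Speh(\tau,p)$ is determined by $(\tau,p)$ (Theorem \ref{tMWGLndisc}), this same $w$ gives $w\sigma_1\cong\sigma_2$ and $w\lambda_1=\lambda_2$.
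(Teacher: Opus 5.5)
Your proposal is correct and follows the paper's proof essentially step for step. You unwind each $\sigma_i\otimes e^{\lambda_i}$ into Speh blocks via Theorem \ref{tMWGLndisc}, pass to the cuspidally induced representations by transitivity, and apply Theorem \ref{tGLncuspdisj} to get equality of the multisets of (cuspidal representation, exponent) pairs. You then recover the blocks using that each segment is symmetric about its purely imaginary center, and realize the block permutation by some $w\in W$.

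The differences are minor. The paper recovers the block counts via the formula $c_i(\rho,t,d)=\mu_i(\rho,t,\tfrac{d-1}{2})-\mu_i(\rho,t,\tfrac{d+1}{2})$, whereas you inductively peel off the longest centered string; both are valid. Your appeal to Lemma \ref{lGLnautoirrep} in Step 1 is harmless but unnecessary. Exactness and transitivity of induction already make every constituent of $\Ind_{P_i}^G(\sigma_i\otimes e^{\lambda_i})$ a constituent of the cuspidal induction.
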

\begin{proof}
  For $r\geq 1$, we simply write
$G_r=\GL(r)$, $A_r=A_{G_r}(\mathbb R)^0$ and the character $\nu_r(g)=|\det(g)|_{\mathbb A}$ of $G_r$. 
If $\pi$ is an irreducible representation of $G_r(\mathbb A)^1$, we denote by $\widetilde{\pi}$ its canonical extension to $G_r(\mathbb A)$ on which $A_r$ acts trivially. 

For $i=1,2$, we assume
\begin{center}
 $M_i=\prod_{j=1}^{r_i}G_{n_{i,j}}$ and  $\sigma_i\simeq\bigotimes_{j=1}^{r_i}\sigma_{i,j}$, 
\end{center}
where
\begin{center}
    $\sigma_{i,j}\in
\Pi_{\disc}\bigl(G_{n_{i,j}}(\mathbb A)^1\bigr)$. 
\end{center}
The restriction of $e^{\lambda_i}$ to the $j$-th factor of $M_i(\mathbb A)$
has the form $\nu_{n_{i,j}}^{\,it_{i,j}}$ for some $t_{i,j}\in\mathbb R$.

By applying Theorem \ref{tMWGLndisc} to $\sigma_{i,j}$, 
there are uniquely determined integers
$m_{i,j},d_{i,j}\geq 1$ and a cuspidal automorphic representation $\rho_{i,j}\in \Pi_{\cusp}(G_{m_{i,j}}(\mathbb A)^1)$ such that 
\begin{center}
    $n_{i,j}=m_{i,j}\cdot d_{i,j}$, and $\widetilde{\sigma_{i,j}}\cong
\Speh(\rho_{i,j},d_{i,j})$. 
\end{center}

Consequently, $\widetilde{\sigma_{i,j}}\otimes\nu_{n_{i,j}}^{\,it_{i,j}}$ is the unique irreducible quotient of the normalized parabolic induction from the following irreducible representation of $\prod_{k=1}^{d_{i,j}}G_{m_{i,j}}(\mathbb{A})$: 
\begin{center}
    $\rho_{i,j}
 \nu_{m_{i,j}}^{\frac{d_{i,j}-1}{2}+it_{i,j}}
\otimes
\rho_{i,j}
 \nu_{m_{i,j}}^{\frac{d_{i,j}-3}{2}+it_{i,j}}
\otimes\cdots\otimes
\rho_{i,j}
 \nu_{m_{i,j}}^{-\frac{d_{i,j}-1}{2}+it_{i,j}}$. 
\end{center}
Notice that the cuspidal representation $\widetilde{\rho_{i,j}}$ is not twisted.  

Let $\mathcal{C}_i$ be the multiset
\begin{center}
$\mathcal C_i
=
\bigsqcup_{j=1}^{r_i}
\left\{
\left(\rho_{i,j},
 \frac{d_{i,j}-1}{2}+it_{i,j}\right),
\left(\rho_{i,j},
 \frac{d_{i,j}-3}{2}+it_{i,j}\right),
\ldots,
\left(\rho_{i,j},
 -\frac{d_{i,j}-1}{2}+it_{i,j}\right)
\right\}$. 
\end{center}
By the defining quotient property of the Speh representations, together
with the transitivity of normalized parabolic induction, any
common irreducible constituent of
\begin{center}
    $\Ind_{P_1(\mathbb A)}^{G_n(\mathbb A)}
   (\sigma_1\otimes e^{\lambda_1})$ and $\Ind_{P_2(\mathbb A)}^{G_n(\mathbb A)}
   (\sigma_2\otimes e^{\lambda_2})$
\end{center}
is an irreducible subquotient of both cuspidally induced representations
determined by $\mathcal C_1$ and $\mathcal C_2$.
Thus, by Theorem \ref{tGLncuspdisj}, we have 
\begin{center}
    $\mathcal{C}_1=\mathcal{C}_2$ 
\end{center}
as multisets, up to permutations of the factors. 
It then remains to recover the original Speh blocks from this expanded cuspidal datum.  

Fix a cuspidal representation $\rho$ (and $\widetilde{\rho}$) and $t\in\mathbb R$.   For $k\in\frac12\mathbb Z_{\geq0}$, we let
\begin{center}
    $\mu_i(\rho,t,k):=$ the multiplicity of $(\rho,k+it)$ in $\mathcal C_i$
\end{center}
and let
\begin{center}
    $c_i(\rho,t,d): =
\#\left\{
j:
\rho_{i,j}\simeq\rho,\;
t_{i,j}=t,\;
d_{i,j}=d
\right\}$. 
\end{center}
Note that a Speh block of length $d'$ contributes the exponent $(d-1)/2+it$ precisely when $d'=d,d+2,d+4,\ldots$. 
Hence we obtain
\begin{center}
    $\mu_i\left(\rho,t,\frac{d-1}{2}\right)
=
\sum_{\ell\geq0}
c_i(\rho,t,d+2\ell)$ and $\mu_i\left(\rho,t,\frac{d+1}{2}\right)
=
\sum_{\ell\geq1}
c_i(\rho,t,d+2\ell)$. 
\end{center}
Therefore we obtain  
\begin{center}
    $c_i(\rho,t,d)
=
\mu_i\left(\rho,t,\frac{d-1}{2}\right)
-
\mu_i\left(\rho,t,\frac{d+1}{2}\right)$.
\end{center}
Since $\mathcal C_1=\mathcal C_2$, it follows that $c_1(\rho,t,d)=c_2(\rho,t,d)$
for every cuspidal representation $\rho$ (of some $G_r$), $t\in\mathbb{R}$, and $d\geq 1$. 
Thus the multisets of triples $\left\{
(\rho_{1,j},t_{1,j},d_{1,j})
\right\}_{j=1}^{r_1}$ and $\left\{
(\rho_{2,j},t_{2,j},d_{2,j})
\right\}_{j=1}^{r_2}$ coincide. 
In particular, after permuting the Levi blocks (or equivalently the $W$-action), we conclude that $r_1=r_2=r$, 
\begin{center}
    $n_{1,j}=n_{2,j}$ and $\sigma_{1,j}\otimes\nu_{n_{1,j}}^{it_{1,j}}
\cong
\sigma_{2,j}\otimes\nu_{n_{2,j}}^{it_{2,j}}$
\end{center}
for each $1\leq j\leq r_1=r_2$. 
Then we can compare the actions of $\GL(n_j,\mathbb{A})^1$ and $A_{\GL(n_j)}(\mathbb{R})^0$ separately, which implies $\sigma_{1,j}\cong \sigma_{2,j}$ and $t_{1,j}=t_{2,j}$. 
Hence we obtain 
\begin{center}
    $wM_1=M_2$, $w\sigma_1
\cong
\sigma_2$, and $w\lambda_1=\lambda_2$. 
\end{center}
for some $w\in W$
\end{proof}

\begin{remark}
Proposition \ref{pGLndiscdisj} is not generally true for an arbitrary reductive group $G$ or non-unitary representations of Levi subgroups. 
Let $Q=LU\subset M$ and $P=MN\subset G$. 
Let $R$ be the
parabolic subgroup of $G$ with Levi factor $L$ (see Example \ref{exmptransind}). 
Suppose that $\sigma\in \Pi_{\disc}(M(\mathbb A)^1)$
is non-cuspidal. Then there exist a cuspidal automorphic representation $\delta\in \Pi_{\cusp}(L(\mathbb A))$
and $\nu\in \mathfrak{a}_{L,\mathbb C}^{M,*}$ 
such that $\sigma$ is an irreducible subquotient of $\Ind_{Q(\mathbb A)}^{M(\mathbb A)}
\bigl(\delta\otimes e^\nu\bigr)$. 

Now let $\lambda\in i\mathfrak{a}_M^*$.  
By transitivity of normalized parabolic induction, $\Ind_{P(\mathbb A)}^{G(\mathbb A)}
\bigl(\sigma\otimes e^\lambda\bigr)$
is a subquotient of
\begin{center}
   $\Ind_{P(\mathbb A)}^{G(\mathbb A)}
\left(
\Ind_{Q(\mathbb A)}^{M(\mathbb A)}
(\delta\otimes e^\nu)
\otimes e^\lambda
\right)$, 
\end{center}
which is naturally isomorphic to
\begin{center}
    $\Ind_{R(\mathbb A)}^{G(\mathbb A)}
\bigl(
\delta\otimes e^{\nu+\iota(\lambda)}
\bigr)$. 
\end{center}
Here $\iota:\mathfrak{a}_M^*\longrightarrow \mathfrak{a}_L^*$ is the natural pullback and $e^{\nu+\iota(\lambda)}$ may fail to be unitary. 
Hence $\Ind_{P(\mathbb A)}^{G(\mathbb A)}
\bigl(\sigma\otimes e^\lambda\bigr)$ and $\Ind_{R(\mathbb A)}^{G(\mathbb A)}
\bigl(
\delta\otimes e^{\nu+\iota(\lambda)}
\bigr)$ 
share an irreducible constituent, although the pairs $(M,\sigma)$ and $(L,\delta)$ 
are not associated. 
\end{remark}

\begin{corollary}\label{csurjalongMsigma}
For $G=\GL(n)$, the $C^*$-algebra homomorphism
\begin{center}
$\Phi=\bigoplus_{[M,\sigma]}\Phi_{M,\sigma}\colon C^{*}_{\mathcal{A}}(G(\mathbb{A}))
\longrightarrow
\bigoplus_{[M,\sigma]}
C^*_{\mathcal A}(G(\mathbb{A}))_{M,\sigma}$
\end{center}
is an isomorphism.
\end{corollary}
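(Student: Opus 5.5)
Since $\Phi$ is already injective by Proposition \ref{pinjPhi}, and its image lies in the $c_0$-direct sum $\bigoplus_{[M,\sigma]} C^*_{\mathcal A}(G(\mathbb A))_{M,\sigma}$ by Proposition \ref{pweldefhom}, it remains to show that the image is all of this sum. I will reduce this to two facts. The first is that each factor $C^*_{\mathcal A}(G(\mathbb A))_{M,\sigma}$ is, by definition, the image of $\Phi_{M,\sigma}$, so every component map is surjective onto its own factor. The second is the standard criterion for $C^*$-algebras: if a $C^*$-subalgebra $D$ of a $c_0$-direct sum $\bigoplus_i B_i$ surjects onto each $B_i$, and the irreducible representations of distinct $B_i$ are pairwise inequivalent as representations of $D$, then $D=\bigoplus_i B_i$. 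I would prove this criterion by taking a closed two-sided ideal $J_i:=D\cap B_i$, showing that $J_i=B_i$, and then using density of finite sums together with the $c_0$-condition.

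To show $J_i=B_i$, I consider for finitely many indices the subalgebra $D_F$, the image of $D$ in $\bigoplus_{i\in F}B_i$. This is a $C^*$-subalgebra of a finite direct sum that projects onto each summand. By a Dixmier-type argument (\cite[Proposition 2.10.2]{DiCalg} combined with the Stone--Weierstrass-type result for CCR algebras, which applies here since $G(\mathbb A)$ is CCR by Lemma \ref{ladeleGCCR}), $D_F$ is the full sum provided that no irreducible representation of $B_i$ is equivalent, when pulled back to $D$, to one of $B_j$ for $i\neq j$. Indeed, in that case the spectrum of $D_F$ is the disjoint union of the spectra of the $B_i$, and the central projections onto the summands lie in the multiplier algebra, with their restrictions given by continuous functions on the spectrum. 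To make this clean, I would state it as a lemma: for a CCR algebra, a surjection $D_F\to B_i$ has kernel $K_i$, and $\bigcap_i K_i=0$. If $K_i+\bigcap_{j\ne i}K_j=D_F$ for every $i$, then the Chinese remainder theorem gives $D_F\cong\bigoplus_i B_i$. The coprimality condition holds exactly when no primitive ideal contains both $K_i$ and $\bigcap_{j\neq i}K_j$, that is, when the hulls are disjoint.

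The main step, and the expected obstacle, is verifying the disjointness of the hulls. By Lemma \ref{lCAGfactor}, the irreducible representations of $C^*_{\mathcal A}(G(\mathbb A))_{M,\sigma}$ are the irreducible constituents of $\Ind_P^G(\sigma\otimes\lambda)$ for $\lambda\in\widehat{A_M}$. By Lemma \ref{lGLnautoirrep}, each such induced representation is itself irreducible. If some $\Ind_{P_1}^G(\sigma_{1,\lambda_1})\cong\Ind_{P_2}^G(\sigma_{2,\lambda_2})$, then Proposition \ref{pGLndiscdisj} produces a Weyl element $w$ with $wM_1=M_2$ and $w\sigma_1\cong\sigma_2$, so $[M_1,\sigma_1]=[M_2,\sigma_2]$ in $\mathfrak D(G)$. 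Hence distinct associate classes give disjoint subsets of the dual, and the hulls of the $K_i$ are disjoint. One care point is that the hulls are closed. Since $\ker\Phi_{M,\sigma}$ is an ideal, its hull is automatically closed, so it suffices to know that the two sets of irreducible representations do not intersect, which is exactly what was shown.

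Finally, I will pass from finite subsets to the full sum. Given $(b_i)$ in the $c_0$-sum and $\varepsilon>0$, I choose a finite set $F$ outside which $\|b_i\|<\varepsilon$. By the finite case, there is $d\in D$ with $\Phi_i(d)=b_i$ for $i\in F$. Multiplying $d$ by an element of $D$ whose image is an approximate unit on the $F$-components and small elsewhere, using the ideal $\bigcap_{j\notin F}\ker\Phi_j$ which maps onto $\bigoplus_{i\in F}B_i$ by the same coprimality argument, yields an element within $\varepsilon$ of $(b_i)$. Since the image of $\Phi$ is closed, it equals the whole sum, and so $\Phi$ is an isomorphism.
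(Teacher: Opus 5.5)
Your overall structure is the paper's: injectivity from Proposition \ref{pinjPhi}, disjoint supports from Lemma \ref{lCAGfactor} and Proposition \ref{pGLndiscdisj}, and then a general surjectivity criterion. The paper simply imports that criterion as \cite[Lemma 5.14]{CCH2016} together with the $c_0$-property of Proposition \ref{pweldefhom}. Your Chinese-remainder proof of the criterion is correct for finitely many indices: sums of closed ideals are closed, $\operatorname{hull}(I+J)=\operatorname{hull}(I)\cap\operatorname{hull}(J)$, and the hull of a \emph{finite} intersection is the union of the hulls. CCR and Stone--Weierstrass are not needed there.

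\textbf{The gap is in the passage to infinitely many indices.} You assert that $J_F=\bigcap_{j\notin F}\ker\Phi_j$ is coprime to each $K_i=\ker\Phi_i$, $i\in F$, ``by the same coprimality argument''. But for an infinite intersection, $\operatorname{hull}(J_F)$ is only the \emph{closure} of $\bigcup_{j\notin F}\operatorname{hull}(K_j)$, and pairwise disjointness says nothing about its limit points. The same issue affects your first plan, since $D\cap B_i=\Phi_i\bigl(\bigcap_{j\neq i}K_j\bigr)$.

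A counterexample: take $D=C(\mathbb N\cup\{\infty\})$ with all evaluations $\Phi_n$, $n\in\mathbb N\cup\{\infty\}$. The map is injective, every component is surjective, and the hulls are pairwise disjoint singletons. Yet $\bigcap_{n\in\mathbb N}\ker\Phi_n=0$ is not coprime to $\ker\Phi_\infty$. So this step needs an extra input. You never use the $c_0$-condition on $\Phi$ here; you only use that the target element $(b_i)$ is $c_0$ in order to choose $F$.

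\textbf{The fix.} By Proposition \ref{pweldefhom}, $D/J_F$ embeds as a $C^*$-subalgebra of the $c_0$-sum $\bigoplus_{j\notin F}B_j$. The irreducible representations of this sum are exactly the $\rho\circ p_j$ with $\rho\in\widehat{B_j}$; this fails for the full product. Now let $\pi$ be an irreducible representation of $D$ vanishing on $J_F$. By \cite[Proposition 2.10.2]{DiCalg}, $\pi$ is a subrepresentation of some $\rho\circ\Phi_j$ with $j\notin F$. Since $\Phi_j$ is onto, $\rho\circ\Phi_j$ is already irreducible, so $\pi\cong\rho\circ\Phi_j$. By the disjointness you established, such a $\pi$ cannot vanish on $K_i$ for $i\in F$. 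Hence $K_i+J_F=D$ for $i\in F$.

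Now apply the finite Chinese remainder theorem to the pairwise coprime family $\{K_i\}_{i\in F}\cup\{J_F\}$. It gives $d\in J_F$ with $\Phi_i(d)=b_i$ for $i\in F$ and $\Phi_j(d)=0$ for $j\notin F$. So $\|\Phi(d)-(b_j)\|\le\varepsilon$ directly, with no approximate-unit manipulation. With this added, your argument becomes a self-contained proof of the cited criterion, and the corollary follows.
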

\begin{proof}
By Proposition \ref{pinjPhi}, it remains to show surjectivity. 
By Lemma \ref{lCAGfactor} and Proposition \ref{pGLndiscdisj}, we know that the supports of $\Phi_{M,\sigma}$ are pairwise disjoint. 
Then, by \cite[Lemma 5.14]{CCH2016} and Proposition \ref{pweldefhom}, $\Phi$ is surjective. 
\end{proof}

\subsection{The structure of the component algebra for $\GL(n)$}
\label{ssGLncomponent}

The following theorem gives the uniqueness of the unitary induction parameter for a fixed discrete datum $\sigma$.  

\begin{theorem}
\label{tuniquesigmaparaGLn}
Let $\sigma\in\Pi_{\disc}\bigl(M(\mathbb A)^1\bigr)$ and $\lambda_1,\lambda_2\in i\mathfrak{a}_M^*$. 
We have
\begin{center}
    $\Ind_P^G(\sigma_{\lambda_1})
\cong
\Ind_P^G(\sigma_{\lambda_2})$
\end{center}
if and only if $\lambda_2=w\lambda_1$ for some $w\in W(G,M,\sigma)$. 
\end{theorem}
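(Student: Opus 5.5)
The two directions are of very different nature, and I will treat them separately.

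\emph{The implication ``$\Leftarrow$''.} Suppose $\lambda_2=w\lambda_1$ with $w\in W(G,M,\sigma)$. By \eqref{eMsigma}, the unitary operator $M(w,\lambda_1)\colon H_{P,\sigma}\to H_{P,\sigma}$ intertwines $I_{P,\sigma}(\lambda_1,-)$ with $I_{P,\sigma}(w\lambda_1,-)$. Since Theorem \ref{tMWGLndisc} gives $m(\sigma)=1$ for $\GL$-blocks (and hence for $M$, a product of $\GL$'s), we have $H_{P,\sigma}$ equal to the space of $\Ind_P^G(\sigma_\lambda)$. Therefore $M(w,\lambda_1)$ is a unitary equivalence $\Ind_P^G(\sigma_{\lambda_1})\cong\Ind_P^G(\sigma_{\lambda_2})$.

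If one prefers to avoid the intertwining operators, there is an alternative argument. Write $w$ as a permutation of the $\GL$-blocks of $M=\prod_j\GL(n_j)$ preserving block sizes, with ${}^w\sigma\cong\sigma$. Then the local induced representations are irreducible by Lemma \ref{lGLnautoirrep}. Induction in stages, together with the fact that a rearrangement of the inducing data gives the same irreducible induced representation (same Langlands/Bernstein--Zelevinsky data), yields the isomorphism place by place.

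\emph{The implication ``$\Rightarrow$''.} Suppose $\Ind_P^G(\sigma_{\lambda_1})\cong\Ind_P^G(\sigma_{\lambda_2})$. Both representations are irreducible by Lemma \ref{lGLnautoirrep}, so they certainly share an irreducible constituent. Proposition \ref{pGLndiscdisj}, applied with $P_1=P_2=P$ and $\sigma_1=\sigma_2=\sigma$, then produces $w\in W$ with
\[
wM=M,\qquad w\sigma\cong\sigma,\qquad w\lambda_1=\lambda_2 .
\]
The first condition says $w\in W(G,M)$, and the second then says $w\in W(G,M,\sigma)$. This gives the claim.

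The main point to verify is the bookkeeping in this last step. Proposition \ref{pGLndiscdisj} concludes with an equality of multisets of triples $(\rho_{j},t_{j},d_{j})$, equivalently of pairs $(\sigma_j,t_j)$. From this we must extract a single block permutation $w$ that simultaneously permutes the $\sigma_j$ and the $\lambda_j=t_j$ coordinates. Such a permutation of blocks of equal size normalizes $M$ and lies in $W(G,M)$, since $W(G,M)$ for $\GL(n)$ consists exactly of the permutations of equal-size blocks. One subtlety remains: when several blocks carry isomorphic $\sigma_j$, the matching of the $t_j$ must be done by the same permutation. This is automatic because the matching is of pairs $(\sigma_j,t_j)$, not of the two coordinates separately.
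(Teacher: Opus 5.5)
Your proof is correct and takes the same approach as the paper: the forward direction applies Proposition \ref{pGLndiscdisj} with $P_1=P_2=P$ and $\sigma_1=\sigma_2=\sigma$, and the converse uses the unitary intertwining operator $M(w,\lambda_1)$ on $H_{P,\sigma}$. You also make explicit two small points the paper leaves implicit: multiplicity one, which identifies $H_{P,\sigma}$ with the space of $\Ind_P^G(\sigma_\lambda)$, and irreducibility, which guarantees that the two induced representations have a common irreducible constituent.
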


\begin{proof}
Suppose first that $\Ind_P^G(\sigma_{\lambda_1})
\cong
\Ind_P^G(\sigma_{\lambda_2})$. 
Then the two induced representations have a common irreducible
constituent. 
By Proposition \ref{pGLndiscdisj}, there exists $w\in W(G,M)$ such that 
\begin{center}
    $w\sigma\simeq\sigma$ and $w\lambda_1=\lambda_2$. 
\end{center}
Thus, $w\lambda_1=\lambda_2$ for $w\in W(G,M,\sigma)$. 

Conversely, suppose that $\lambda_2=w\lambda_1$ for some $w\in W(G,M,\sigma)$. 
Then $w\sigma\simeq\sigma$ and therefore $w(\sigma_{\lambda_1})\cong\sigma_{w\lambda_1}=\sigma_{\lambda_2}$. 
Since $\lambda_1\in i\mathfrak a_M^*$, the normalized standard intertwining operator associated with $w$ is unitary and invertible.
It consequently induces an isomorphism $\Ind_P^G(\sigma_{\lambda_1})
\cong
\Ind_P^G(\sigma_{\lambda_2})$. 
This proves the assertion.
\end{proof}

For $\varphi\in \widehat{A}$, we consider the group
\begin{center}
    $W(\sigma,\varphi):=\{w\in W(G,M,\sigma)\mid w\varphi=\varphi\}$. 
\end{center}
By Proposition \ref{pGlobalWeylAction}, we know that
\begin{center}
    $U_w\in \Aut(\Ind_P^G(\sigma\otimes\varphi))$
\end{center}
for $w\in W(\sigma,\varphi)$. 
It holds for general reductive groups, not only for $\GL(n)$. 
\begin{lemma}\label{ltricommalg}
The $C^*$-algebra generated by $\{U_w\mid w\in W(\sigma,\varphi)\}$ is $\mathbb{C}$.
\end{lemma}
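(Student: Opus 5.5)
The plan is to identify the operators $U_w$, for $w\in W(\sigma,\varphi)$, with self-intertwining operators of a single irreducible representation and then apply Schur's lemma. Since $w\varphi=\varphi$, formula \eqref{eUw} evaluated at the fiber $\lambda=\varphi$ gives
\[
(U_wF)(\varphi)=M(w,w^{-1}\varphi)F(w^{-1}\varphi)=M(w,\varphi)F(\varphi).
\]
So $U_w$ descends to the fiber $\mathcal E_{M,\sigma}\otimes_{C_0(\widehat A)}\mathbb C_\varphi\cong \Ind_P^G(H_{M,\sigma}\otimes\varphi)$. There it acts by the unitary operator $M(w,\varphi)$, which by \eqref{eMsigma} commutes with $I_{P,\sigma}(\varphi,g)$ for every $g\in G(\mathbb A)$. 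Equivalently, this is step 5 of Proposition \ref{pGlobalWeylAction} specialized at the fixed point $\varphi$. Consequently the $C^*$-algebra generated by $\{U_w\mid w\in W(\sigma,\varphi)\}$ lies in the commutant of the $G(\mathbb A)$-action on the fiber.

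Next I will show that this commutant is $\mathbb C$. For $\GL(n)$ and its inner forms, Theorem \ref{tMWGLndisc} (respectively the Badulescu--Renard analogue) gives multiplicity one, so $H_{M,\sigma}=H_\sigma$. By Lemma \ref{lGLnautoirrep}, the fiber $\Ind_P^G(\sigma\otimes\varphi)$ is then irreducible. Schur's lemma gives $\End_{G(\mathbb A)}=\mathbb C$, and each $M(w,\varphi)$ is a unimodular scalar. The generated $C^*$-algebra contains the identity, since $U_1=1$, so it equals $\mathbb C$.

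The main obstacle is the claim for a general reductive group, which the text asserts. There, $\Ind_P^G(\sigma_\varphi)$ can be reducible and $m(\sigma)$ can exceed $1$, so the intertwining algebra is an $R$-group type algebra rather than $\mathbb C$. I would therefore state the argument under the hypotheses that the fiber is irreducible with multiplicity one, which holds in the $\GL(n)$ and inner form settings used afterwards. Alternatively, I would interpret the lemma as asserting that the $U_w$ act by scalars whenever $\Ind_P^G(\sigma_\varphi)$ is irreducible. One further point needs checking: that the intertwining operators are normalized so that $w\mapsto M(w,\varphi)$ is an honest homomorphism on $W(\sigma,\varphi)$. This follows from \eqref{eGlobalMfunctional} together with $w\varphi=\varphi$, so the scalars form a character of $W(\sigma,\varphi)$ and their span is exactly $\mathbb C$.
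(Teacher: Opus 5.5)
Your argument is correct and is essentially the paper's proof. The paper likewise uses Proposition~\ref{pGlobalWeylAction} to see that $U_w$ commutes with the $G(\mathbb A)$-action, and Lemma~\ref{lGLnautoirrep} (irreducibility of $\Ind_P^G(\sigma\otimes\varphi)$, so $\End_G\cong\mathbb C$ by Schur); you also spell out the descent of $U_w$ to the fiber at the fixed point $\varphi$ and the role of multiplicity one. Your caution about general reductive groups is well founded: the paper's own proof relies on the $\GL(n)$-specific Lemma~\ref{lGLnautoirrep}, so its remark that the lemma ``holds for general reductive groups'' is not justified by that argument.
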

\begin{proof}
By Lemma \ref{lGLnautoirrep}, we know $\End_{G}(\Ind_P^G(\sigma\otimes\varphi))\cong \mathbb{C}$.  
It then follows from Proposition \ref{pGlobalWeylAction}.  
\end{proof}

\begin{proposition}\label{psurjGLncomp}
For $G=\GL(n)$, $C^*_{\mathcal{A}}(G)_{M,\sigma}\cong \mathcal{K}(\Ind_P^G \mathcal{H}_{M,\sigma})^{W(G,M,\sigma)}$.  
\end{proposition}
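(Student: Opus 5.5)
We prove that the inclusion $C^*_{\mathcal A}(G)_{M,\sigma}\subset \mathcal K_{C_0(\widehat A)}(\Ind_P^G\mathcal H_{M,\sigma})^{W(G,M,\sigma)}$ from Corollary \ref{cintocpminv} is an equality. The strategy follows the real tempered case \cite[\S 5]{CCH2016}: first identify the spectrum of the fixed-point algebra $B:=\mathcal K_{C_0(\widehat A)}(\mathcal E_{M,\sigma})^{W(G,M,\sigma)}$, then apply a Stone--Weierstrass type criterion for subalgebras of a $C^*$-algebra with Hausdorff spectrum. By Theorem \ref{tMWGLndisc}, $m(\sigma)=1$, so $H_{M,\sigma}=H_\sigma$ and $\mathcal E_{M,\sigma}\cong C_0(\widehat A,\Ind_P^G H_\sigma)$. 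Hence $\mathcal K_{C_0(\widehat A)}(\mathcal E_{M,\sigma})\cong C_0(\widehat A,\mathcal K(\Ind_P^G H_\sigma))$, and $W(G,M,\sigma)$ acts through $\beta_w$, which moves the base point $\lambda\mapsto w\lambda$ and conjugates fibres by the unitaries $M(w,\lambda)$.

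First, describe the irreducible representations of $B$. For each $\varphi\in\widehat A$, evaluation at $\varphi$ maps $B$ into the commutant of $\{U_w: w\in W(\sigma,\varphi)\}$ inside $\mathcal K(\Ind_P^G(\sigma\otimes\varphi))$. By Lemma \ref{ltricommalg}, which rests on the irreducibility in Lemma \ref{lGLnautoirrep}, the stabilizer unitaries act by scalars. So the commutant is all of $\mathcal K(\Ind_P^G(\sigma\otimes\varphi))$. A partition-of-unity and averaging argument over the finite group $W(G,M,\sigma)$ shows that evaluation at $\varphi$ maps $B$ onto this full compact algebra: given $T_\varphi$, pick $F\in C_0(\widehat A,\mathcal K)$ with $F(\varphi)=T_\varphi$ supported near $\varphi$, and average $\frac{1}{|W|}\sum_w\beta_w(F)$; the stabilizer terms fix $T_\varphi$ because the $U_w$ are scalars. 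Consequently every irreducible representation of $B$ is an evaluation $\mathrm{ev}_\varphi$, these are irreducible, and $\mathrm{ev}_\varphi\simeq\mathrm{ev}_{\varphi'}$ exactly when $\varphi'\in W(G,M,\sigma)\varphi$. The spectrum $\widehat B$ is therefore the Hausdorff space $\widehat A/W(G,M,\sigma)$, and $B$ is a continuous-trace-like algebra whose fibres are all elementary.

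Second, compare with the subalgebra $C^*_{\mathcal A}(G)_{M,\sigma}$. Its image under $\mathrm{ev}_\varphi$ is $\Ind_P^G(\sigma_\varphi)(C^*_{\mathcal A}(G))$. By Lemma \ref{lGLnautoirrep} this representation is irreducible, and by Lemma \ref{ladeleGCCR} its image is the whole of $\mathcal K(\Ind_P^G(\sigma\otimes\varphi))$. By Theorem \ref{tuniquesigmaparaGLn}, two such evaluations are equivalent on the subalgebra if and only if they are equivalent on $B$, i.e. iff the points lie in the same $W(G,M,\sigma)$-orbit. So the subalgebra is a rich subalgebra of $B$: it surjects onto each irreducible quotient of $B$ and separates inequivalent irreducible representations of $B$. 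The conclusion then follows from the noncommutative Stone--Weierstrass theorem for $C^*$-algebras with Hausdorff spectrum and elementary fibres, e.g. \cite[Lemma 5.14]{CCH2016} or \cite[Proposition 11.1.6]{DiCalg}. Concretely, restrict to the ideal structure over $\widehat A/W(G,M,\sigma)$ and use that a $C_0(\widehat B)$-subalgebra which is fibrewise full and separates fibres is everything.

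The main obstacle is the first step, namely checking that the fibre of $B$ at a point with nontrivial stabilizer really is the full $\mathcal K$, and that $B$ is a genuine continuous field over the orbit space. This requires the strong continuity of $\lambda\mapsto M(w,\lambda)$ from Proposition \ref{pGlobalWeylAction}, together with the fact that $M(w,\varphi)$ for $w\in W(\sigma,\varphi)$ is a self-intertwiner of an irreducible representation, hence scalar. I would handle this by the explicit averaging construction above, checking near each $\varphi$ that averaged sections realize any prescribed fibre value. Stone--Weierstrass then also requires identifying the primitive ideal topology of both algebras with the quotient topology; for the subalgebra this should follow from continuity of $\varphi\mapsto\|\Ind_P^G(\sigma_\varphi)(f)\|$ inherited from $B$.
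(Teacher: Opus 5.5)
Your proposal is correct and follows essentially the same route as the paper. You identify the irreducible representations of the fixed-point algebra as the evaluations $e_\varphi$, parametrized by $\widehat A/W(G,M,\sigma)$ because the stabilizer unitaries act by scalars (Lemma~\ref{ltricommalg}); you check that their restrictions to $C^*_{\mathcal A}(G)_{M,\sigma}$ are irreducible (Lemma~\ref{lGLnautoirrep}) and mutually inequivalent (Theorem~\ref{tuniquesigmaparaGLn}); and you conclude by the noncommutative Stone--Weierstrass theorem. The only differences are that you prove fibrewise fullness of the fixed-point algebra by an explicit averaging over $W(G,M,\sigma)$ where the paper cites \cite[Lemma~6.5]{CCH2016}, and that your worry about matching primitive-ideal topologies is unnecessary, since the postliminal Stone--Weierstrass theorem the paper uses \cite[Theorem~11.1.8]{DiCalg} needs only the irreducibility and separation conditions.
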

\begin{proof}
Write $\mathcal{K}_{C_0(\widehat A)}(\mathcal{E}_{M,\sigma})^{W(G,M,\sigma)}$ simply as $B_{M,\sigma}$. 
By Corollary \ref{cintocpminv}, we already have an inclusion
\begin{equation}\label{eABinclGLn}
C^*_{\mathcal A}(G)_{M,\sigma}
\subseteq
B_{M,\sigma}.
\end{equation}
It suffices to show this inclusion is surjective.

Since $G=\GL(n)$, every Levi subgroup is a product of general
linear groups.  
By Theorem \ref{tMWGLndisc}, 
the discrete representation $\sigma$ of $M$ occurs with
multiplicity one. 
Therefore the fiber of $\mathcal{E}_{M,\sigma}$ over $\varphi\in\widehat A$ is the Hilbert space of the induced representation $\Ind_P^G(\sigma\otimes\varphi)$. 
By Lemma \ref{ltricommalg}, we have already shown  
\begin{center}
    $\mathfrak{I}(\sigma,\varphi)
:=
C^*\bigl(
U_{w,\varphi}:w\in W(\sigma,\varphi)
\bigr)
\subseteq B(\Ind_P^G(\sigma\otimes\varphi))$
\end{center}
is trivial for any $\varphi\in\widehat{A}$. 
Thus the only non-zero projection in $\mathfrak{I}(\sigma,\varphi)$ is $1$. 
Applying the fixed-point-algebra argument of
\cite[Lemma~6.5]{CCH2016} to the finite $W_{\sigma}$-action of
Proposition \ref{pGlobalWeylAction}, we obtain:  
\begin{enumerate}
\item every irreducible representation of
$B_{M,\sigma}$ is equivalent to the representation
\begin{center}
    $e_{\varphi}\colon B_{M,\sigma}\to \mathcal{K}(\Ind_P^G(\sigma\otimes\varphi))$
\end{center} 
obtained by evaluation at some
$\varphi\in\widehat A$;

\item for $\varphi_1,\varphi_2\in\widehat A$, Theorem \ref{tuniquesigmaparaGLn} implies that 
\begin{equation}\label{eBirrepeqGLn}
    e_{\varphi_1}\cong e_{\varphi_2} \text{ if and only if  }\varphi_2=w\varphi_1 \text{ for some }w\in W(G,M,\sigma). 
\end{equation}

\item $B_{M,\sigma}$ is postliminal (see \cite[5.4.13]{DiCalg}).
\end{enumerate}

We next examine the restrictions of these irreducible
representations to the subalgebra
$C^*_{\mathcal A}(G)_{M,\sigma}$.
Consider the restriction $r_{\varphi}:=e_{\varphi}\mid_{C^*_{\mathcal A}(G)_{M,\sigma}}$. 
By definition, the pullback $r_{\varphi}\circ \Phi_{M,\sigma}$ 
is precisely the integrated representation of $C^*_{\mathcal A}(G(\mathbb A))$ associated with $\Ind_P^G(\sigma\otimes\varphi)$, which is irreducible by Lemma \ref{lGLnautoirrep}. 
Hence $r_{\varphi}$ is irreducible.  
If $V$ is invariant under $r_{\varphi}(C^*_{\mathcal A}(G)_{M,\sigma})$, then $V$ is invariant under
$(r_{\varphi}\circ \Phi_{M,\sigma})(C^*_{\mathcal A}(G))$. 
Since the latter representation is irreducible, $V$ is either trivial or the entire space. 

It remains to verify that inequivalent irreducible representations of
$B_{M,\sigma}$ remain inequivalent after restriction to
$C^*_{\mathcal A}(G)_{M,\sigma}$. 
Suppose that 
\begin{center}
    $r_{\varphi_1}\cong r_{\varphi_2}$
\end{center}
as representations of $C_{\mathcal{A}}^{*}(G(\mathbb{A}))_{M,\sigma}$. 
After composing with $\Phi_{M,\sigma}$, we obtain equivalent
representations of $C^*_{\mathcal A}(G(\mathbb A))$, and hence
equivalent $G(\mathbb A)$-representations
$\Ind_P^G(\sigma\otimes\varphi_1)$ and $
\Ind_P^G(\sigma\otimes\varphi_2)$. 
By Theorem \ref{tuniquesigmaparaGLn}, there exists
$w\in W(G,M,\sigma)=W_{\sigma}$ such that $\varphi_2=w\varphi_1$. 
It then follows from \eqref{eBirrepeqGLn} that $e_{\varphi_1}\cong e_{\varphi_2}$. 
Thus inequivalent irreducible representations of
$B_{M,\sigma}$ have inequivalent restrictions to
$C^*_{\mathcal A}(G)_{M,\sigma}$. 
By the noncommutative Stone--Weierstrass theorem \cite[Theorem 11.1.8]{DiCalg}, we conclude that $C^*_{\mathcal A}(G(\mathbb{A}))_{M,\sigma}=B_{M,\sigma}$, which proves the proposition.
\end{proof}

To indicate the dependence on the Levi subgroup, we write $A_M$ for the connected component of the real points of the central subgroup of a Levi subgroup $M$ of $G$. 
Note that the $\sigma$-isotypic subspace $H_{M,\sigma}$ reduces to $H_{\sigma}$ by the multiplicity-one theorem for a general linear group. 
\begin{corollary}\label{cisoGLn}
    $C^*_{\mathcal A}(\GL(n,\mathbb{A}))\cong \bigoplus_{[M,\sigma]}\mathcal{K}\bigl(\Ind_P^G( C_0(\widehat{A_M},H_{\sigma}))\bigr)^{W(G,M,\sigma)}$
\end{corollary}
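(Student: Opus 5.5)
The isomorphism of Corollary \ref{cisoGLn} will be obtained by composing two results already established for $G=\GL(n)$. First, Corollary \ref{csurjalongMsigma} says that
\[
\Phi=\bigoplus_{[M,\sigma]}\Phi_{M,\sigma}\colon C^*_{\mathcal A}(\GL(n,\mathbb A))\longrightarrow \bigoplus_{[M,\sigma]} C^*_{\mathcal A}(G(\mathbb A))_{M,\sigma}
\]
is an isomorphism. Second, Proposition \ref{psurjGLncomp} identifies each summand with the full fixed-point algebra $\mathcal K_{C_0(\widehat{A_M})}(\Ind_P^G\mathcal H_{M,\sigma})^{W(G,M,\sigma)}$. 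The plan is to glue these componentwise and then simplify the coefficient module using multiplicity one.

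First, I fix a representative $(M,\sigma)$ for each associate class and a standard parabolic $P$ with Levi factor $M$. Proposition \ref{psurjGLncomp} gives $C^*_{\mathcal A}(G)_{M,\sigma}=B_{M,\sigma}$ as subalgebras of $\mathcal K_{C_0(\widehat{A_M})}(\mathcal E_{M,\sigma})$. The $C^*$-direct sum (the $c_0$-sum) of equalities of subalgebras is an equality, so the target of $\Phi$ in Corollary \ref{csurjalongMsigma} is exactly $\bigoplus_{[M,\sigma]}B_{M,\sigma}$. Hence $\Phi$ is an isomorphism onto this sum. Proposition \ref{pweldefhom} guarantees that $\Phi$ lands in the $c_0$-sum rather than the product, so no issue arises there.

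Next, I replace $H_{M,\sigma}$ by $H_\sigma$. By Theorem \ref{tMWGLndisc}, applied to each $\GL$-block of $M$, the multiplicity $m(\sigma)$ equals $1$, since a discrete representation of a product is the tensor product of discrete representations of the factors. So $H_{M,\sigma}=H_\sigma$ canonically, and $C_0(\widehat{A_M},H_{M,\sigma})=C_0(\widehat{A_M},H_\sigma)$ as Hilbert $C_0(\widehat{A_M})$-modules. This identification intertwines the $U_w$ of Proposition \ref{pGlobalWeylAction}, because it is the identity map on spaces.

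Finally, I check independence of choices. Changing the representative within an associate class, or the parabolic with a given Levi factor, changes the module by a unitary built from the intertwining operators $M(s,\lambda)$ of Lemma \ref{lM}. This unitary conjugates the fixed-point algebras onto one another, so the direct sum is well defined up to isomorphism.

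I expect no genuine obstacle in this argument. The only point needing care is the compatibility of the well-definedness over associate classes with the Weyl actions, which follows from the functional equation in Lemma \ref{lM}(2).
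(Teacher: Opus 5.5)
Your proposal is correct and follows the paper's proof: it combines Corollary \ref{csurjalongMsigma} with Proposition \ref{psurjGLncomp}, and uses the multiplicity-one theorem (Theorem \ref{tMWGLndisc}) to replace $H_{M,\sigma}$ by $H_\sigma$. Your extra check that the answer does not depend on the chosen representatives is not spelled out in the paper, but it is harmless.
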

\begin{proof}
It follows from Corollary \ref{csurjalongMsigma} and Proposition \ref{psurjGLncomp}. 
\end{proof}

\section{The isomorphism theorem for inner forms of $\GL(n)$}
\label{sisoinnerGLn}

This section is devoted to the analogue of Corollary \ref{cisoGLn} for inner forms of $\GL(n)$. 
Let $D$ be a central division algebra over $\mathbb{Q}$ of degree $d$, i.e., $\dim_{\mathbb{Q}}D=d^2$, and let
\begin{center}
$G=G_r:=\GL(r,D)$ 
\end{center}
with $n=rd$. 
It is known that $G$ is an inner form of $\GL(n,\mathbb{Q})$ and any inner form of $\GL(n)$ is given by such a form. 
Observe that $G(\mathbb{A})\cong \GL(r,D(\mathbb{A}))$. 
For $g\in G(\mathbb{A})$, we write
\begin{center}
   $\eta(g)=|\Nrd(g)|_{\mathbb A}$ 
\end{center}
for the adelic absolute value of the reduced norm on $G(\mathbb{A})$. 

Recall that the global Jacquet--Langlands correspondence \cite{Badu08,BaRe10} gives an injective map
\begin{center}
    $\JL\colon \Pi_{\disc}(\GL(r,D(\mathbb{A})))\to \Pi_{\disc}(\GL(n,\mathbb{A}))$. 
\end{center}
By Theorem \ref{tMWGLndisc}, for $\pi\in \Pi_{\disc}(G_r(\mathbb{A}))$, we know that 
\begin{center}
    $\JL(\pi)\cong \Speh(\sigma,m)$
\end{center}
for a unique $\sigma\in \Pi_{\cusp}(\GL(l,\mathbb{A}))$ and the factorization $n=ml$.  
Following \cite[\S 18]{BaRe10}, we let $m(\pi)=m$ and define the character on $\GL(r,D(\mathbb{A}))$ by
\begin{center}
$\eta_{\pi}:=\eta^{m(\pi)}=|\Nrd(\cdot)|_{\mathbb{A}}^m$. 
\end{center}
Then we define the representation $\MW(\pi,k)$ to be the unique irreducible quotient of the induced representation
\begin{equation}\label{einnerMW} \Ind_{P(\mathbb{A})}^{\GL(rk,D(\mathbb{A}))}(\pi\eta_{\pi}^{(k-1)/2}\otimes \pi\eta_{\pi}^{(k-3)/2}\otimes\cdots\otimes\pi\eta_{\pi}^{(1-k)/2}), 
\end{equation}
where $P$ is the standard parabolic subgroup of $\GL(rk,D(\mathbb{A}))$ with $M_P\cong \prod_{i=1}^{k}\GL(r,D(\mathbb{A}))$ being its Levi factor. 

The following result is due to Badulescu and Renard (see \cite[Proposition 18.2]{BaRe10} and \cite[Theorem 5.1]{Badu08}).

\begin{theorem}\label{tInnerMWdisc}
The irreducible subrepresentations of $L_{\mathrm{disc}}^{2}
\bigl(\GL(r,D(\mathbb{Q}))\backslash \GL(r,D(\mathbb{A}))^{1}\bigr)$ 
have multiplicity one and are parametrized by pairs $(k,\sigma)$ where
\begin{itemize}
    \item $r=kp$ is divisible by $k$,
    \item $\sigma$ is an irreducible unitary cuspidal automorphic representation of $\GL(k,D(\mathbb{A}))$. 
\end{itemize}
Suppose that $\pi$ is parametrized by $(k,\sigma)$ and $P$ is the standard parabolic subgroup of $\GL(r,D(\mathbb{A}))$ of type
$(k,\ldots,k)$, 
then
\begin{center}
    $\pi\cong \MW(\sigma,p)$. 
\end{center}
\end{theorem}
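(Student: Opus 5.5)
This statement is the Badulescu--Renard classification of the discrete spectrum of $\GL(r,D)$, stated in the paper as a citation (\cite[Proposition 18.2]{BaRe10}, \cite[Theorem 5.1]{Badu08}). The proof I propose reduces it to Theorem \ref{tMWGLndisc} through the global Jacquet--Langlands correspondence, following the architecture of \cite{BaRe10}.

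\textbf{Step 1: the map $\JL$ and multiplicity one.} I first fix the global map $\JL\colon\Pi_{\disc}(\GL(r,D(\mathbb A)))\to\Pi_{\disc}(\GL(n,\mathbb A))$. It is obtained by comparing the discrete parts of the Arthur--Selberg trace formulas for $G_r$ and $\GL(n)$ with matching test functions; this requires the simple trace formula at places where $D$ ramifies. The comparison gives:
\begin{itemize}
\item $\JL$ is injective;
\item $\JL$ is compatible with the local correspondences $|\mathbf{LJ}_v|$ at every place;
\item its image consists of those $\Speh(\rho,m)$ that are ``$D$-compatible'' at every ramified place of $D$.
\end{itemize}
Multiplicity one for $G_r$ then follows from multiplicity one for $\GL(n)$ (Theorem \ref{tMWGLndisc}), injectivity of $\JL$, and strong multiplicity one transported through $\JL$. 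Any two discrete $\pi,\pi'$ with the same image must occur with equal multiplicities on both sides of the trace identity.

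\textbf{Step 2: cuspidal support.} Let $\pi$ be discrete with $\JL(\pi)=\Speh(\rho,m)$, where $\rho\in\Pi_{\cusp}(\GL(l,\mathbb A))$ and $n=ml$. I would show that $\JL^{-1}$ of the smallest admissible Speh block is cuspidal. Write this block as $\Speh(\rho,s_\rho)$, where $s_\rho$ is the least $s$ making the local components $D_v$-compatible; by definition $s_\rho$ divides the relevant degrees. Set $\sigma:=\JL^{-1}(\Speh(\rho,s_\rho))$, a representation of $\GL(k,D(\mathbb A))$ with $kd=s_\rho l$. Then $m=s_\rho p$ and $r=kp$. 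Cuspidality of $\sigma$ comes from two facts: residual spectrum on $G_k$ arises from poles of Eisenstein series, and Jacquet--Langlands commutes with constant terms. So if $\sigma$ were residual, its $\JL$ image would be a Speh with a strictly shorter admissible block.

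\textbf{Step 3: identifying $\pi$.} I would compute the residue of Eisenstein series induced from $\sigma\otimes\cdots\otimes\sigma$ at the point $\eta_\sigma^{(p-1)/2}\otimes\cdots\otimes\eta_\sigma^{(1-p)/2}$. The normalizing factors are Rankin--Selberg $L$-functions of $\JL(\sigma)=\Speh(\rho,s_\rho)$, and the exponent $m(\sigma)=s_\rho$ in $\eta_\sigma$ is exactly where the pole of order $p-1$ sits. The residue is therefore square-integrable and equals the Langlands quotient $\MW(\sigma,p)$ of \eqref{einnerMW}. Its $\JL$ image is $\Speh(\rho,m)$, computed place by place via the local Zelevinsky/Tadi\'c description of ladders; Lemma \ref{lGLnautoirrep}-type irreducibility arguments handle the inductions. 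Injectivity of $\JL$ then gives $\pi\cong\MW(\sigma,p)$. Conversely, uniqueness of the pair $(k,\sigma)$ follows from uniqueness of $(\rho,m)$ in Theorem \ref{tMWGLndisc}.

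\textbf{Main obstacle.} The hardest part is Step 1 together with the local input for Step 3. This means establishing the trace formula comparison at ramified places and proving that local $|\mathbf{LJ}_v|$ sends the local ladder quotients to $\pm$ irreducibles, which is the unitarity result of Badulescu--Renard. I would rely on \cite{Badu08,BaRe10} for these rather than reprove them.
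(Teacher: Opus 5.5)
The paper gives no proof of its own here. It only cites \cite[Proposition 18.2]{BaRe10} and \cite[Theorem 5.1]{Badu08}. Your outline follows the structure of those references: global $\JL$ from the trace formula comparison, multiplicity one, cuspidality of the shortest admissible block, residues of Eisenstein series, and injectivity of $\JL$. So the route is the same.

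One step, however, is asserted on a false ground. In Step 2, minimality of $s_\rho$ only gives $s_\rho\le m$. To write $m=s_\rho p$ you need the set of $k$ for which $\Speh(\rho,k)$ is $D$-compatible to be exactly $s_\rho\mathbb{Z}_{>0}$.

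That is not true ``by definition'', and the global arguments in your sketch do not give it. Comparing cuspidal supports only shows that the cuspidal support of $\pi$ consists of cuspidal $\rho_i'$ with $\JL(\rho_i')\cong\Speh(\rho\otimes|\det|^{it_i},a_i)$, where each $a_i$ is admissible and $\sum_i a_i=m$. Nothing there forces $s_\rho\mid a_i$.

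The missing input is local. At each place $v$ where $D$ ramifies, $\Speh(\rho,k)_v$ is built from the local Speh representations $u(\delta,k)$ attached to the square-integrable constituents $\delta$ of $\rho_v$. The computation of $|\mathbf{LJ}_v|$ on these (available in \cite{Badu08,BaRe10}) shows that, for each $\delta$, the $k$ with $u(\delta,k)$ $d_v$-compatible are either all $k$ or the multiples of a single integer. Intersecting over the finitely many $\delta$ and $v$ gives $s_\rho\mathbb{Z}_{>0}$. Without this, Step 3 never reaches $\pi$, so it belongs on your list of imported local results.

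Your uniqueness claim silently uses the same fact. From $\Speh(\rho,m(\sigma)p)\cong\Speh(\rho,m(\sigma')p')$ you get $p=p'$ only if $m(\sigma)=m(\sigma')$, that is, only if every cuspidal $\sigma'$ has $\JL$-image of the minimal admissible length. There are two ways to get this:
\begin{itemize}
\item From divisibility plus multiplicity one: if $\JL(\sigma')\cong\Speh(\rho,a)$ with $a>s_\rho$, then $s_\rho\mid a$, and injectivity gives $\sigma'\cong\MW(\sigma,a/s_\rho)$, which is residual.
\item More directly, by comparing the cuspidal supports of $\MW(\sigma,p)$ and $\MW(\sigma',p')$ via Theorem~\ref{tInnerGLncuspsupport}.
\end{itemize}

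Two smaller points.
\begin{itemize}
\item ``$\JL$ commutes with constant terms'' should be read as compatibility of $\JL$ with cuspidal supports: equality of local components at split unramified places, plus Jacquet--Shalika. With that reading, your proof that $\JL^{-1}(\Speh(\rho,s_\rho))$ is cuspidal is correct and needs no divisibility.
\item Step 1 needs the comparison of the full discrete parts of the two trace formulas. The simple trace formula is not enough, because it cannot see residual representations.
\end{itemize}
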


We shall also use the following  irreducibility property as an analogue of Lemma \ref{lGLnautoirrep}. 
Let $P=MN$ be a parabolic subgroup of $\GL(r,D)$. 
Note that $\Ind_{P(\mathbb A)}^{G(\mathbb A)}(\sigma_\lambda)$, or simply $\Ind_{P}^{G}(\sigma_\lambda)$, is the parabolic induction of $\sigma_{\lambda}=\sigma\otimes e^{\langle \lambda,H_M(-)\rangle}$ with
$\sigma\in\Pi_{\disc}\bigl(M(\mathbb A)^1\bigr)$ and $\lambda\in i\mathfrak{a}_M^*$. 

\begin{lemma}\label{lInnerGLnautoirrep}
Let $P=MN$ be a parabolic subgroup of $G=\GL(r,D)$,
$\sigma\in\Pi_{\disc}(M(\mathbb A)^1)$ and
$\lambda\in i\mathfrak a_M^*$.  
Then $\Ind_P^G(\sigma_{\lambda})$ is irreducible. 
\end{lemma}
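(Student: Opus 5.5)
The plan follows the proof of Lemma \ref{lGLnautoirrep}: reduce to local irreducibility at every place. First decompose $\sigma\cong\bigotimes_v'\sigma_v$ by \cite{Flath77}. Since $e^{\langle\lambda,H_M(\cdot)\rangle}$ is a product of local unitary characters $|\Nrd(\cdot)|_v^{i t_j}$ on the blocks of $M=\prod_j\GL(r_j,D)$, we also have $\sigma_\lambda\cong\bigotimes_v'\sigma_{\lambda,v}$. Then
\[
\Ind_{P(\mathbb A)}^{G(\mathbb A)}(\sigma_\lambda)\cong\bigotimes_v{}'\,\Ind_{P(F_v)}^{G(F_v)}(\sigma_{\lambda,v}).
\]
For almost all $v$ the spherical vectors match, since $D$ splits at almost all places and $\sigma_{\lambda,v}$ is unramified there. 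A restricted tensor product of irreducible unitary representations, taken with respect to spherical vectors, is irreducible. So it suffices to prove that each local factor $\Ind_{P(F_v)}^{G(F_v)}(\sigma_{\lambda,v})$ is irreducible.

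Next I identify the local components. By Theorem \ref{tInnerMWdisc}, each block $\sigma_j$ of $\sigma$ is $\MW(\rho_j,p_j)$ for a cuspidal $\rho_j$. Badulescu--Renard \cite[\S 18]{BaRe10} (see also \cite{Badu08}) show that the local components of $\MW(\rho_j,p_j)$ are local Speh-type representations $u(\rho_{j,v},p_j)$ for the group $\GL(r_j,D_v)$, twisted by the local characters $\eta_v^{i t_j}$. Here $\rho_{j,v}$ is the local component of the cuspidal $\rho_j$; it is unitary, though not necessarily tempered. At places where $D$ splits these are ordinary $\GL$ Speh representations. In that case irreducibility of the local induction follows exactly as in Lemma \ref{lGLnautoirrep}: Bernstein \cite{Bernstein1984} at non-archimedean places and \cite[\S 13]{BaRe10} at archimedean places.

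At the finitely many places where $D_v$ is non-split, I will invoke the inner-form analogue of Bernstein's theorem: a parabolic induction of irreducible unitary representations of Levi factors of $\GL(r,D_v)$ is irreducible. This is due to Sécherre for non-archimedean $D_v$ and is recorded and used in \cite{BaRe10}. The archimedean case $D_v=\mathbb H$ is treated in \cite[\S 13]{BaRe10}.

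The main obstacle is this last step. For inner forms, unitary induction of a unitary representation is not a priori irreducible the way it is for $\GL_n$ via Bernstein's argument. One must check that the local components $u(\rho_{j,v},p_j)\eta_v^{it_j}$ are unitary, or at least lie in the class where \cite{BaRe10} proves irreducibility. This is where the relevant normalizations come in: the factors $\eta_\pi=\eta^{m(\pi)}$ and the possibility that a local component of a cuspidal $\rho_j$ on $D_v$ is itself a local Speh representation, rather than a discrete series one. I would first try to quote directly the statement in \cite{BaRe10} that $\Ind(u_1\otimes\cdots\otimes u_k)$ is irreducible for irreducible unitary $u_i$ on $\GL(r_i,D_v)$, together with unitarity of the local components. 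That reduces the lemma to bookkeeping.
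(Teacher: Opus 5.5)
Your proposal is correct and follows essentially the same route as the paper: factor $\Ind_P^G(\sigma_\lambda)$ as a restricted tensor product of local unitary inductions, then invoke local irreducibility of unitary parabolic induction (Sécherre's proof of Tadić's (U0) at non-archimedean places, \cite{BaRe10} at archimedean places). The ``main obstacle'' you flag does not arise: the local components of the irreducible unitary representation $\sigma_\lambda$ are automatically irreducible and unitary, so (U0) applies directly and you do not need to identify the local components of $\MW(\rho_j,p_j)$.
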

\begin{proof}
Consider the restricted tensor product $\Ind_P^G(\sigma_\lambda)
\simeq
\bigotimes_v
\Ind_{P(F_v)}^{G(F_v)}(\sigma_{\lambda,v})$, 
where each local induced representation is irreducible and unitary. 
It remains to prove each local irreducibility. 
For the archimedean places, it follows from \cite[\S 12]{BaRe10}. 
For nonarchimedean places, it follows from \cite{Secherre2009} (see also \cite{Bernstein1984}). 
\end{proof}

We have the following disjointness result for cuspidal Levi data of $\GL(r,D)$ (see \cite[Proposition 1.6, Corollary 1.7]{BaRe10}). 
Please note that they consider the so-called essentially cuspidal representations, which are twists of unitary cuspidal representations by a real power of $\eta$. 

\begin{theorem}\label{tInnerGLncuspsupport}
For $i=1,2$, let $P_i=M_i N_i$ be a parabolic subgroup of $\GL(r,D)$, $\lambda_i\in \mathfrak{a}_{M_i,\mathbb{C}}^*$ and $\delta_i\in \Pi_{\cusp}(M_{i}(\mathbb{A})^1)$. 
Suppose $\pi$ is an irreducible automorphic representation of $\GL(r,D(\mathbb{A}))$ which is a
common irreducible constituent of $\Ind_{P_1(\mathbb{A})}^{\GL(r,D(\mathbb{A}))}
\bigl(\delta_1\otimes e^{\lambda_1}\bigr)$ and $\Ind_{P_2(\mathbb{A})}^{\GL(r,D(\mathbb{A}))}\bigl(\delta_2\otimes e^{\lambda_2}\bigr)$. 
Then $M_1,M_2$ together with $(\delta_1,\lambda_1),(\delta_2,\lambda_2)$ are $\GL(r,D)$-conjugate.   
\end{theorem}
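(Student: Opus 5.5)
Theorem \ref{tInnerGLncuspsupport} is the inner-form counterpart of the Jacquet--Shalika statement Theorem \ref{tGLncuspdisj}, and we plan to derive it in two steps: first pass from constituents of global induced representations to local cuspidal supports, and then use strong multiplicity one for cuspidal representations of $\GL(r,D)$ to reassemble a global statement. All the real input is taken from \cite[Proposition 1.6, Corollary 1.7]{BaRe10}; our task is to reformulate it for the normalization used here. Badulescu--Renard work with essentially cuspidal representations. So we first write $\delta_i\otimes e^{\lambda_i}$ as a tensor product over the blocks of $M_i\cong\prod_j \GL(r_{i,j},D)$. Each factor has the form $\delta_{i,j}\,\eta^{s_{i,j}}$ with $\delta_{i,j}$ unitary cuspidal and $s_{i,j}\in\mathbb C$. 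Writing $\mathrm{Re}\,s_{i,j}$ for the real-power twist and $\mathrm{Im}\,s_{i,j}$ for the unitary part, this is exactly an essentially cuspidal representation in their sense. Under this dictionary, the pair $(M_i,\delta_i\otimes e^{\lambda_i})$ becomes the multiset of essentially cuspidal data $\{(r_{i,j},\delta_{i,j}\eta^{s_{i,j}})\}_j$.

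Next we localize. The common constituent $\pi=\otimes'_v\pi_v$ is a subquotient of both global induced representations. Hence for every place $v$, $\pi_v$ is a subquotient of both local inductions $\Ind_{P_i(F_v)}^{G(F_v)}(\delta_{i,v}\otimes e^{\lambda_i})$. At a place $v$ where $D$ splits and all the data are unramified, the Satake parameters of $\pi_v$ determine the unordered multiset of unramified characters forming the cuspidal support. More generally, at almost all $v$ the local cuspidal support of $\pi_v$ (in the sense of Bernstein, via \cite{Secherre2009}) determines the multisets of local components $\{\delta_{i,j,v}\eta_v^{s_{i,j}}\}$ up to permutation.

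The key step, and the one we expect to be the main obstacle, is to pass from the equality of these local multisets at almost all places to a global matching. The plan follows the Jacquet--Shalika argument. Through the global Jacquet--Langlands correspondence for cuspidal representations \cite{Badu08,BaRe10}, we transfer to $\GL(n)$. There the Rankin--Selberg $L$-functions $L^S(s,\JL(\delta_{1,j})\times\JL(\delta_{2,k})^\vee)$ have a pole at the appropriate point exactly when the transferred representations agree up to twist. Comparing the partial $L$-functions of the two multisets, which coincide because the local multisets coincide at almost all places, lets us peel off matching blocks inductively, one pole at a time, just as in the proof of \cite[Theorem 4.4]{JacShlk1981}. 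The delicate point is that $\JL$ of a cuspidal representation of $\GL(r,D)$ may be a non-cuspidal Speh representation. We will therefore use the cuspidal support of $\JL(\delta)$ together with injectivity of $\JL$ \cite[Corollary 1.7]{BaRe10} to recover $\delta$ from its transferred data. By strong multiplicity one for $\GL(r,D)$, the resulting matching yields $\delta_{1,j}\cong\delta_{2,\tau(j)}$ and $s_{1,j}=s_{2,\tau(j)}$ for some permutation $\tau$.

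Finally, a permutation of blocks of equal size is realized by a Weyl element $w\in W$ of $\GL(r,D)$ with $wM_1=M_2$. This element carries $(\delta_1,\lambda_1)$ to $(\delta_2,\lambda_2)$, which is the asserted $\GL(r,D)$-conjugacy.
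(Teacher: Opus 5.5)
The paper gives no argument of its own here. It quotes \cite[Proposition 1.6, Corollary 1.7]{BaRe10} and only remarks that their essentially cuspidal representations are the twists $\delta\,\eta^{s}$. Your first paragraph is exactly this dictionary, so had you stopped there and cited, you would match the paper.

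The derivation you sketch instead has a genuine gap at the step you flag as the main obstacle. There are three problems.

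First, locally you only get that, at almost all places, the \emph{union} of the Satake parameters of the $\delta_{1,j,v}\eta_v^{s_{1,j}}$ equals that for $i=2$. The cuspidal support of $\pi_v$ does not determine the multiset of the local components $\delta_{i,j,v}\eta_v^{s_{i,j}}$ themselves; these are constituents of unramified principal series, not supercuspidals.

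Second, the Rankin--Selberg criterion you state is not available when $\JL(\delta)=\Speh(\rho,m)$ with $m>1$. The function $L^S(s,\Speh(\rho,m)\times\Speh(\rho',m')^\vee)$ is a product of $mm'$ shifted cuspidal Rankin--Selberg $L$-functions and has poles at several points, so a pole does not single out a matching block. Moreover, you only know that the total Euler products over all blocks agree, not block by block.

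Third, the legitimate use of Jacquet--Shalika is to unfold each $\JL(\delta_{i,j})\eta^{s_{i,j}}$ into its cuspidal segment $\rho_{i,j}|\det|^{s_{i,j}+\frac{m_{i,j}-1}{2}},\dots,\rho_{i,j}|\det|^{s_{i,j}-\frac{m_{i,j}-1}{2}}$. You then apply \cite[Theorem 4.4]{JacShlk1981}, in its form for local components at almost all places, to these cuspidal data. This yields only equality of the two \emph{unions} of segments. Since the $s_{i,j}$ are arbitrary complex numbers, a union of segments does not determine the segments. For example, the points $\rho,\ \rho|\det|$ arise either from two blocks with $m=1$ and $s=0,1$, or from one block with $m=2$ and $s=\tfrac12$, that is, from different Levi subgroups. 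Injectivity of $\JL$ recovers $\delta$ from its own segment but says nothing about how the points are grouped, so your ``peel off matching blocks'' step is unjustified.

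The missing input is that the segment length is forced. For a unitary cuspidal $\rho$, all cuspidal $\delta$ (on any $\GL(r',D)$) with $\JL(\delta)\cong\Speh(\rho,m)$ have the same $m=m_\rho$, and $m_{\rho|\det|^{it}}=m_\rho$ by twisting. This follows from three facts:
\begin{itemize}
\item injectivity of $\JL$ on the discrete spectrum;
\item $\JL(\MW(\delta,k))\cong\Speh(\rho,k\,m(\delta))$;
\item uniqueness of the parameter in Theorem \ref{tInnerMWdisc}.
\end{itemize}
Indeed, if $m\neq m'$ both occurred, $\Speh(\rho,mm')$ would have the two distinct preimages $\MW(\delta,m')$ and $\MW(\delta',m)$.

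With the length fixed, the recovery goes through. On each line $\{\rho|\det|^{z}: z\in s_0+\mathbb Z\}$, the point of largest real part must be the top of a segment of length $m_\rho$. Removing that segment and inducting recovers all $(\rho_{i,j},m_{i,j},s_{i,j})$. Injectivity of $\JL$ then gives $\delta_{1,j}\cong\delta_{2,\tau(j)}$ and $s_{1,j}=s_{2,\tau(j)}$, and your Weyl-element step finishes the argument.
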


We have the disjointness property for discrete Levi datum of $\GL(r,D)$ as well, which is the analogue of Proposition~\ref{pGLndiscdisj}.

\begin{proposition}\label{pInnerGLndiscdisj}
For $i=1,2$, let $P_i=M_i N_i$ be a parabolic subgroup of $\GL(r,D)$, $\lambda_i\in i\mathfrak{a}_{M_i}^*$ and $\sigma_i\in \Pi_{\disc}(M_{i}(\mathbb{A})^{1})$. 
If $\Ind_{P_1(\mathbb{A})}^{\GL(r,D(\mathbb{A}))}
\bigl(\sigma_1\otimes e^{\lambda_1}\bigr)$ and $\Ind_{P_2(\mathbb{A})}^{\GL(r,D(\mathbb{A}))}\bigl(\sigma_2\otimes e^{\lambda_2}\bigr)$ share  a common irreducible constituent,  
then there exists $w\in W$ such that 
\begin{center}
    $wM_1=M_2$, $w\sigma_1\cong \sigma_2$ and $w\lambda_1=\lambda_2$. 
\end{center}
\end{proposition}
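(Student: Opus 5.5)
The argument follows the proof of Proposition~\ref{pGLndiscdisj} step by step. Theorem~\ref{tInnerMWdisc} replaces Theorem~\ref{tMWGLndisc}, and Theorem~\ref{tInnerGLncuspsupport} replaces the Jacquet--Shalika Theorem~\ref{tGLncuspdisj}.

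First, write $M_i=\prod_{j=1}^{r_i}\GL(r_{i,j},D)$ and $\sigma_i\simeq\bigotimes_j\sigma_{i,j}$, with $\sigma_{i,j}\in\Pi_{\disc}(\GL(r_{i,j},D(\mathbb A))^1)$. The restriction of $e^{\lambda_i}$ to the $j$-th block is $\eta^{it_{i,j}}$ for some $t_{i,j}\in\mathbb R$. By Theorem~\ref{tInnerMWdisc}, the canonical extension of each $\sigma_{i,j}$ is $\MW(\rho_{i,j},p_{i,j})$ for a uniquely determined unitary cuspidal $\rho_{i,j}$ of $\GL(k_{i,j},D(\mathbb A))$ with $r_{i,j}=k_{i,j}p_{i,j}$. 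By \eqref{einnerMW}, $\sigma_{i,j}\otimes\eta^{it_{i,j}}$ is therefore a quotient of the induction of
\[
\rho_{i,j}\eta_{\rho_{i,j}}^{\frac{p_{i,j}-1}{2}}\eta^{it_{i,j}}\otimes\cdots\otimes\rho_{i,j}\eta_{\rho_{i,j}}^{-\frac{p_{i,j}-1}{2}}\eta^{it_{i,j}}.
\]
By transitivity of induction, a common constituent of the two induced representations is a common constituent of two cuspidally induced representations. Theorem~\ref{tInnerGLncuspsupport} then shows that the multisets
\[
\mathcal C_i=\bigsqcup_j\{(\rho_{i,j},\,\tfrac{p_{i,j}-1}{2}-\ell+\tfrac{it_{i,j}}{m(\rho_{i,j})})\}_{0\le \ell\le p_{i,j}-1}
\]
agree, with exponents measured in units of $\eta_{\rho}=\eta^{m(\rho)}$.

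Second, recover the MW-blocks from the common multiset exactly as in the $\GL(n)$ case. Fix $\rho$ and $t$, and let $\mu_i(\rho,t,k)$ be the multiplicity of the exponent $k+it$ attached to $\rho$ in $\mathcal C_i$. Let $c_i(\rho,t,p)$ be the number of blocks with data $(\rho,t,p)$. A block of length $p'$ contributes the real exponent $\frac{p-1}{2}$ exactly when $p'\in\{p,p+2,\dots\}$, so
\[
c_i(\rho,t,p)=\mu_i\bigl(\rho,t,\tfrac{p-1}{2}\bigr)-\mu_i\bigl(\rho,t,\tfrac{p+1}{2}\bigr).
\]
Hence the multisets of triples $(\rho_{i,j},t_{i,j},p_{i,j})$ coincide. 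After a block permutation, which is realized by an element $w\in W$, we get $r_{1,j}=r_{2,j}$ and $\MW(\rho_{1,j},p_{1,j})\eta^{it_{1,j}}\cong\MW(\rho_{2,j},p_{2,j})\eta^{it_{2,j}}$. Separating the actions of $\GL(r_j,D(\mathbb A))^1$ and $A_{\GL(r_j,D)}(\mathbb R)^0$ gives $\sigma_{1,j}\cong\sigma_{2,j}$ and $t_{1,j}=t_{2,j}$. This yields $wM_1=M_2$, $w\sigma_1\cong\sigma_2$ and $w\lambda_1=\lambda_2$.

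The main obstacle is the subtlety absent for $\GL(n)$: the step size $\eta_\rho=\eta^{m(\rho)}$ depends on $\rho$ through the Jacquet--Langlands transfer. Two different cuspidals, or the same cuspidal with different $m$, could in principle produce overlapping essentially cuspidal data. I would handle this by grouping $\mathcal C_i$ according to the unitary cuspidal $\rho$ first; this grouping is canonical because Theorem~\ref{tInnerGLncuspsupport} matches essentially cuspidal data, and the unitary part is read off by restricting to $M(\mathbb A)^1$. Within a fixed $\rho$, the integer $m(\rho)$ is constant, so the ladder argument works in units of $\eta_\rho$. The remaining check is that $\rho$ and $m(\rho)$ are uniquely determined by $\sigma_{i,j}$, which holds by the uniqueness in Theorem~\ref{tInnerMWdisc} and the injectivity of $\JL$.
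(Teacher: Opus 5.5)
Your proposal is correct and follows the paper's proof essentially step for step. You expand each $\sigma_{i,j}\otimes\eta^{it_{i,j}}$ into its $\MW$ cuspidal ladder via Theorem~\ref{tInnerMWdisc}, obtain $\mathcal C_1=\mathcal C_2$ from Theorem~\ref{tInnerGLncuspsupport}, and recover the blocks by the same difference-of-multiplicities count as in Proposition~\ref{pGLndiscdisj}. The only difference is cosmetic: the paper records exponents in units of $\eta$ (namely $m(\rho)\frac{d-1}{2}+it$) and just says the recovery ``follows similarly'', whereas you normalize by $\eta_\rho$ and spell out the grouping by $\rho$ that makes the step size $m(\rho)$ constant. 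With your normalization the exponent should read $k+it/m(\rho)$ rather than $k+it$, which is a harmless relabeling.
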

\begin{proof}
For $s\geq 1$, we simply write $A_s=A_{G_s}(\mathbb R)^0$.  
If $\pi$ is an irreducible representation of $G_s(\mathbb A)^1$, we denote by $\widetilde{\pi}$ its canonical extension to $G_s(\mathbb A)$ on which $A_s$ acts trivially. 
For $i=1,2$, we assume
\begin{center}
 $M_i=\prod_{j=1}^{r_i}G_{s_{i,j}}=\prod_{j=1}^{r_i}\GL(s_{i,j},D)$ and  $\sigma_i\simeq\bigotimes_{j=1}^{r_i}\sigma_{i,j}$, 
\end{center}
where $\sigma_{i,j}\in
\Pi_{\disc}\bigl(G_{s_{i,j}}(\mathbb A)^1\bigr)$. 
The restriction of $e^{\lambda_i}$ to the $j$-th factor of $M_i(\mathbb A)$
has the form $\eta^{it_{i,j}}$ for some $t_{i,j}\in\mathbb R$.

By applying Theorem \ref{tInnerMWdisc} to $\sigma_{i,j}$, 
there are uniquely determined integers
$m_{i,j},d_{i,j}\geq 1$ and a cuspidal automorphic representation $\rho_{i,j}\in \Pi_{\cusp}(G_{m_{i,j}}(\mathbb A)^1)$ such that 
\begin{center}
    $s_{i,j}=m_{i,j}\cdot d_{i,j}$, and $\widetilde{\sigma_{i,j}}\cong
\MW(\rho_{i,j},d_{i,j})$. 
\end{center}
Consequently, $\widetilde{\sigma_{i,j}}\otimes\eta^{\,it_{i,j}}$ is the unique irreducible quotient of the normalized parabolic induction from the following irreducible representation of $\prod_{k=1}^{d_{i,j}}G_{m_{i,j}}(\mathbb{A})$: 
\begin{center}
    $\rho_{i,j}
 \eta_{i,j}^{\frac{d_{i,j}-1}{2}}\eta^{it_{i,j}}
\otimes
\rho_{i,j}
 \eta_{i,j}^{\frac{d_{i,j}-3}{2}}\eta^{it_{i,j}}
\otimes\cdots\otimes
\rho_{i,j}
 \eta_{i,j}^{-\frac{d_{i,j}-1}{2}}\eta^{it_{i,j}}$,
\end{center}
where $\eta_{i,j}=\eta_{\rho_{i,j}}=\eta^{m(\rho_{i,j})}$ (see \eqref{einnerMW}). 
We let $\mathcal{C}_i$ be the following multiset: 
\begin{center}
$\bigsqcup_{j=1}^{r_i}
\left\{
\left(\rho_{i,j},
 m(\rho_{i,j})\frac{d_{i,j}-1}{2}+it_{i,j}\right),
\left(\rho_{i,j},
 m(\rho_{i,j})\frac{d_{i,j}-3}{2}+it_{i,j}\right),
\ldots,
\left(\rho_{i,j},
 -m(\rho_{i,j})\frac{d_{i,j}-1}{2}+it_{i,j}\right)
\right\}$. 
\end{center}
By the defining quotient property of the representation $\MW(\sigma,p)$ (see Theorem \ref{tInnerMWdisc}), together
with the transitivity of normalized parabolic induction, any
common irreducible constituent of
\begin{center}
    $\Ind_{P_1(\mathbb A)}^{G_r(\mathbb A)}
   (\sigma_1\otimes e^{\lambda_1})$ and $\Ind_{P_2(\mathbb A)}^{G_r(\mathbb A)}
   (\sigma_2\otimes e^{\lambda_2})$
\end{center}
is an irreducible subquotient of both cuspidally induced representations
determined by $\mathcal C_1$ and $\mathcal C_2$.
Thus, by Theorem \ref{tInnerGLncuspsupport}, we have 
\begin{center}
    $\mathcal{C}_1=\mathcal{C}_2$
\end{center}
as multisets, up to permutations of the factors. 

It remains to recover the original $\MW(\cdot,\cdot)$ blocks from this expanded cuspidal datum.  
The proof then follows similarly from the proof of Proposition \ref{pGLndiscdisj}.  
Hence we have 
\begin{center}
    $wM_1=M_2$ and $w\bigl(\sigma_1\otimes e^{\lambda_1}\bigr)
\cong
\sigma_2\otimes e^{\lambda_2}$ 
\end{center}
for some $w\in W$.  
\end{proof}

The following result gives the uniqueness of the unitary induction parameter for a fixed discrete datum $\sigma$ of $\GL(r,D)$. 
It is the inner form version of Theorem \ref{tuniquesigmaparaGLn} and the proof follows similarly.

\begin{corollary}\label{cuniquesigmaparaInnerGLn}
Let $G=\GL(r,D)$ and $P=MN$ be a parabolic subgroup. 
Let $\sigma\in\Pi_{\disc}(M(\mathbb A)^1)$ and $\lambda_1,\lambda_2\in i\mathfrak a_M^*$. 
Then $\Ind_P^G(\sigma_{\lambda_1})\cong \Ind_P^G(\sigma_{\lambda_2})$ if and only if $\lambda_2=w\lambda_1$  
for some $ w\in W(G,M,\sigma)$.
\end{corollary}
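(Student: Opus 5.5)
We follow the proof of Theorem \ref{tuniquesigmaparaGLn}, with Proposition \ref{pInnerGLndiscdisj} in place of Proposition \ref{pGLndiscdisj}, and treat the two implications separately.

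For the forward implication, suppose $\Ind_P^G(\sigma_{\lambda_1})\cong\Ind_P^G(\sigma_{\lambda_2})$. By Lemma \ref{lInnerGLnautoirrep} both sides are irreducible, so they share a common irreducible constituent. We apply Proposition \ref{pInnerGLndiscdisj} with $P_1=P_2=P$, $M_1=M_2=M$ and $\sigma_1=\sigma_2=\sigma$. It produces $w\in W$ with $wM=M$, ${}^w\sigma\cong\sigma$ and $w\lambda_1=\lambda_2$.

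The one point needing care is that this $w$ lies in $W(G,M)$ rather than merely in $W$, i.e.\ that it normalizes $M$ itself and not just up to conjugacy. In the proof of Proposition \ref{pInnerGLndiscdisj}, $w$ is a permutation of the Levi blocks $\GL(s_{j},D)$ matching the triples $(\rho_{i,j},t_{i,j},d_{i,j})$. The block sizes $s_{j}$ are unchanged, so the permutation sends $M=\prod_j\GL(s_j,D)$ to itself. Such block permutations are realized by elements of $N(A_M)$ with entries in $D$, and hence by elements of $W(G,M)$. The condition ${}^w\sigma\cong\sigma$ then gives $w\in W(G,M,\sigma)$.

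For the converse, suppose $\lambda_2=w\lambda_1$ with $w\in W(G,M,\sigma)$. Then ${}^w(\sigma_{\lambda_1})\cong\sigma_{w\lambda_1}=\sigma_{\lambda_2}$. Since $\lambda_1\in i\mathfrak a_M^*$, Lemma \ref{lM} says $M(w,\lambda_1)$ is unitary and intertwines $I_{P}(\lambda_1,-)$ with $I_P(w\lambda_1,-)$. By \eqref{eMsigma} it restricts to the $\sigma$-isotypic pieces. Multiplicity one (Theorem \ref{tInnerMWdisc}) identifies $H_{P,\sigma}$ with the space of $\Ind_P^G(\sigma_\lambda)$, so $M(w,\lambda_1)$ is an isomorphism $\Ind_P^G(\sigma_{\lambda_1})\cong\Ind_P^G(\sigma_{\lambda_2})$.

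The main obstacle is the bookkeeping in the forward direction: checking that the element supplied by Proposition \ref{pInnerGLndiscdisj} really is a block permutation stabilizing $M$. This should follow from the multiset identification of the triples $(\rho_{i,j},t_{i,j},d_{i,j})$ in that proof.
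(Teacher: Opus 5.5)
Your proposal is correct and follows the paper's own argument. The paper proves this statement by transposing the proof of Theorem~\ref{tuniquesigmaparaGLn}: Proposition~\ref{pInnerGLndiscdisj} gives the forward direction, and the unitary intertwining operator $M(w,\lambda_1)$ gives the converse. The bookkeeping you flag as the main obstacle is automatic: with $M_1=M_2=M$, Proposition~\ref{pInnerGLndiscdisj} already yields $wM=M$, which is exactly the definition of $w\in W(G,M)$, so you need not open up that proof.
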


\begin{corollary}\label{csurjInnerGLn}
For $G=\GL(r,D)$, the $C^*$-algebra homomorphism
\[
\Phi=
\bigoplus_{[M,\sigma]}\Phi_{M,\sigma}
\colon
C^*_{\mathcal A}(G(\mathbb A))
\longrightarrow
\bigoplus_{[M,\sigma]}
C^*_{\mathcal A}(G(\mathbb A))_{M,\sigma}
\]
is an isomorphism.
\end{corollary}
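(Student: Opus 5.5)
Since $\Phi$ is already known to be injective by Proposition \ref{pinjPhi}, the task is surjectivity. We will mirror the proof of Corollary \ref{csurjalongMsigma}, substituting the inner-form inputs for the $\GL(n)$ ones.

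First, we identify the spectrum of each component algebra $C^*_{\mathcal A}(G(\mathbb A))_{M,\sigma}$. Lemma \ref{lCAGfactor} holds for any reductive group. It shows that the irreducible representations of $C^*_{\mathcal A}(G(\mathbb A))_{M,\sigma}$, pulled back along $\Phi_{M,\sigma}$, are exactly the irreducible constituents of $\Ind_P^G(\sigma\otimes\lambda)$ for $\lambda\in\widehat{A_M}$. By Lemma \ref{lInnerGLnautoirrep}, each $\Ind_P^G(\sigma_\lambda)$ is itself irreducible. Therefore the support of $\Phi_{M,\sigma}$ in $\Pi_{\mathcal A}(G)$ is the set $\{\Ind_P^G(\sigma_\lambda):\lambda\in i\mathfrak a_M^*\}$.

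Second, we show that the supports for distinct associate classes $[M,\sigma]\neq[M',\sigma']$ are pairwise disjoint. Suppose $\Ind_P^G(\sigma_\lambda)\cong\Ind_{P'}^G(\sigma'_{\lambda'})$. Proposition \ref{pInnerGLndiscdisj} then gives $w\in W$ with $wM=M'$ and $w\sigma\cong\sigma'$, so the two pairs are associated, a contradiction. Consequently each irreducible representation of $C^*_{\mathcal A}(G(\mathbb A))$ factors through exactly one $\Phi_{M,\sigma}$. Such a representation factors through at least one component because $\Phi$ is injective, i.e.\ $R_G=\bigoplus R_{M,\sigma}$ by \eqref{edecMdisc}.

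Third, we conclude as in Corollary \ref{csurjalongMsigma}. Proposition \ref{pweldefhom} places $\Phi$ in the $c_0$-direct sum. We then apply \cite[Lemma 5.14]{CCH2016}: if a $C^*$-algebra maps injectively into a $c_0$-direct sum of quotients, each component map is surjective onto its summand, and the component supports are disjoint, then the map is surjective. Hence $\Phi$ is onto $\bigoplus_{[M,\sigma]}C^*_{\mathcal A}(G(\mathbb A))_{M,\sigma}$, and therefore an isomorphism.

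The main point to check is the disjointness step, which rests on Proposition \ref{pInnerGLndiscdisj}. Its proof in turn depends on recovering the $\MW$ blocks from the expanded cuspidal multiset. Unlike the $\GL(n)$ case, the exponent spacing there is $m(\rho)$ instead of $1$. We would redo the counting identity $c_i(\rho,t,d)=\mu_i(\rho,t,m(\rho)\tfrac{d-1}{2})-\mu_i(\rho,t,m(\rho)\tfrac{d+1}{2})$. This works because $m(\rho)$ depends only on $\rho$, so all blocks with the same cuspidal $\rho$ share the same spacing. Granting that proposition, the remaining steps are formal.
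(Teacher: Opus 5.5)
Your proposal is correct and follows essentially the same route as the paper: injectivity from Proposition~\ref{pinjPhi}, pairwise disjointness of the supports of the $\Phi_{M,\sigma}$ via Lemma~\ref{lCAGfactor} and Proposition~\ref{pInnerGLndiscdisj}, and surjectivity from \cite[Lemma 5.14]{CCH2016} together with the $c_0$-property of Proposition~\ref{pweldefhom}. Your additional remarks go slightly beyond what the paper writes but agree with it: the irreducibility of $\Ind_P^G(\sigma_\lambda)$ via Lemma~\ref{lInnerGLnautoirrep}, and the counting identity with spacing $m(\rho)$ used to recover the $\MW$ blocks (which the paper only says ``follows similarly'').
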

\begin{proof}
By Proposition \ref{pinjPhi}, it remains to show surjectivity. 
By Lemma \ref{lCAGfactor} and Proposition \ref{pInnerGLndiscdisj}, we know that the supports of $\Phi_{M,\sigma}$ are pairwise disjoint. 
Then, by \cite[Lemma 5.14]{CCH2016} and Proposition \ref{pweldefhom}, $\Phi$ is surjective. 
\end{proof}

Recall that $W(\sigma,\varphi)=\left\{
w\in W(G,M,\sigma):w\varphi=\varphi
\right\}$ for $\varphi\in\widehat A$ and there is a $G$-intertwining unitary operator 
\begin{center}
    $U_w\in \Aut(\Ind_P^G(\sigma\otimes\varphi))$
\end{center}
for $w\in W(\sigma,\varphi)$ by Proposition \ref{pGlobalWeylAction}. 

\begin{corollary}\label{cInnertricommalg}
For $G=\GL(r,D)$, the $C^*$-algebra generated by $\{U_w:w\in W(\sigma,\varphi)\}$ in $B\bigl(\Ind_P^G(\sigma\otimes\varphi)\bigr)$ is $\mathbb C$.
\end{corollary}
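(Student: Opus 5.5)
The argument follows Lemma \ref{ltricommalg}, with Lemma \ref{lInnerGLnautoirrep} replacing Lemma \ref{lGLnautoirrep}. Fix $\varphi\in\widehat A$ and $w\in W(\sigma,\varphi)$. By Theorem \ref{tInnerMWdisc}, $\sigma$ occurs with multiplicity one in the discrete spectrum of $M(\mathbb A)^1$, since $M$ is a product of groups $\GL(s_j,D)$ and multiplicity one holds blockwise. Hence $H_{M,\sigma}=H_\sigma$, and the fiber of $\mathcal E_{M,\sigma}$ over $\varphi$ is exactly the Hilbert space of $\Ind_P^G(\sigma\otimes\varphi)$. Because $w\varphi=\varphi$, the defining formula \eqref{eUw} gives, on this fiber, the single operator $U_{w,\varphi}=M(w,\varphi)$, which maps $H_{P,\sigma}$ to itself since $w\in W(G,M,\sigma)$. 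By Proposition \ref{pGlobalWeylAction} (equivalently \eqref{eMsigma} with $w\varphi=\varphi$), $U_{w,\varphi}$ is unitary and commutes with $I_{P,\sigma}(\varphi,g)$ for all $g\in G(\mathbb A)$. So each $U_{w,\varphi}$ lies in $\End_{G(\mathbb A)}\bigl(\Ind_P^G(\sigma\otimes\varphi)\bigr)$.

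Next, Lemma \ref{lInnerGLnautoirrep} states that $\Ind_P^G(\sigma_\varphi)$ is irreducible for unitary $\varphi\in i\mathfrak a_M^*$. Since this is an irreducible unitary representation, Schur's lemma gives $\End_{G(\mathbb A)}\bigl(\Ind_P^G(\sigma\otimes\varphi)\bigr)=\mathbb C$. Each $U_{w,\varphi}$ is therefore a unimodular scalar. The $C^*$-algebra they generate lies inside $\mathbb C\cdot 1$ and contains $U_{1}=1$, so it equals $\mathbb C$.

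The only real input is the irreducibility in Lemma \ref{lInnerGLnautoirrep}, which comes from local irreducibility at every place. The one point needing care is the identification of the fiber with a single copy of $\Ind_P^G(\sigma\otimes\varphi)$. Without multiplicity one, the commutant would be a matrix algebra $M_{m(\sigma)}(\mathbb C)$ and the conclusion could fail. For this reason I would invoke the multiplicity-one part of Theorem \ref{tInnerMWdisc} explicitly for the Levi $M$, rather than only for $G$.
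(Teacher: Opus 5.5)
Your proposal is correct and follows the same route as the paper. The paper's proof is the one line "It follows from Lemma~\ref{lInnerGLnautoirrep}": the $U_w$ with $w\in W(\sigma,\varphi)$ are $G(\mathbb A)$-intertwining unitaries on the fiber over $\varphi$, and the irreducibility of $\Ind_P^G(\sigma\otimes\varphi)$ together with Schur's lemma forces them to be scalars. Your explicit appeal to multiplicity one (Theorem~\ref{tInnerMWdisc}, applied blockwise to $M$) to identify that fiber with a single copy of $\Ind_P^G(\sigma\otimes\varphi)$ is a useful precision; the paper makes it explicit only in the component-algebra arguments, namely the proof of Proposition~\ref{psurjGLncomp} and its inner-form analogue Proposition~\ref{psurjInnerGLncomp}.
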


\begin{proof}
It follows from Lemma~\ref{lInnerGLnautoirrep}. 
\end{proof}

\begin{proposition}\label{psurjInnerGLncomp}
Let $G=\GL(r,D)$.  Then
\[
C^*_{\mathcal A}(G(\mathbb A))_{M,\sigma}
\cong
\mathcal K\bigl(\Ind_P^G\mathcal H_{M,\sigma}\bigr)
^{W(G,M,\sigma)}.
\]
\end{proposition}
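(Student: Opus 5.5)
The plan is to transcribe the proof of Proposition \ref{psurjGLncomp} to the inner form, using the inner-form inputs already established in this section. Write $B_{M,\sigma}:=\mathcal K_{C_0(\widehat A)}(\mathcal E_{M,\sigma})^{W(G,M,\sigma)}$. Corollary \ref{cintocpminv} holds for any reductive group, so it already gives the inclusion $C^*_{\mathcal A}(G(\mathbb A))_{M,\sigma}\subseteq B_{M,\sigma}$. The goal is to prove equality with the noncommutative Stone--Weierstrass theorem \cite[Theorem 11.1.8]{DiCalg}. For this we need two facts about restriction from $B_{M,\sigma}$ to the subalgebra: it must send irreducible representations to irreducible ones, and it must send inequivalent ones to inequivalent ones.

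\textbf{Step 1: the fibres and the irreducible representations of $B_{M,\sigma}$.} Write $M=\prod_j\GL(s_j,D)$. By Theorem \ref{tInnerMWdisc}, each factor $\sigma_j$ occurs with multiplicity one, so $H_{M,\sigma}=H_\sigma$. Hence the fibre of $\mathcal E_{M,\sigma}$ at $\varphi\in\widehat A$ is the Hilbert space of $\Ind_P^G(\sigma\otimes\varphi)$. By Corollary \ref{cInnertricommalg}, the algebra generated by the $U_w$ with $w\in W(\sigma,\varphi)$ is $\mathbb C$, so its only nonzero projection is $1$. Feeding this into the fixed-point argument of \cite[Lemma~6.5]{CCH2016}, applied to the finite action of Proposition \ref{pGlobalWeylAction}, gives three conclusions:
\begin{enumerate}
\item every irreducible representation of $B_{M,\sigma}$ is equivalent to an evaluation $e_\varphi$ on $\Ind_P^G(\sigma\otimes\varphi)$;
\item $e_{\varphi_1}\cong e_{\varphi_2}$ exactly when $\varphi_1,\varphi_2$ lie in the same $W(G,M,\sigma)$-orbit;
\item $B_{M,\sigma}$ is postliminal.
\end{enumerate}

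\textbf{Step 2: irreducibility of restrictions.} Set $r_\varphi=e_\varphi|_{C^*_{\mathcal A}(G)_{M,\sigma}}$. The pullback $r_\varphi\circ\Phi_{M,\sigma}$ is the integrated form of $\Ind_P^G(\sigma\otimes\varphi)$, which is irreducible by Lemma \ref{lInnerGLnautoirrep}. Any subspace invariant under $r_\varphi$ is also invariant under this pullback, so $r_\varphi$ is irreducible.

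\textbf{Step 3: separation.} Suppose $r_{\varphi_1}\cong r_{\varphi_2}$. Composing with $\Phi_{M,\sigma}$ gives $\Ind_P^G(\sigma_{\varphi_1})\cong\Ind_P^G(\sigma_{\varphi_2})$ as $G(\mathbb A)$-representations. Corollary \ref{cuniquesigmaparaInnerGLn} then gives $\varphi_2=w\varphi_1$ for some $w\in W(G,M,\sigma)$, and by Step 1 this means $e_{\varphi_1}\cong e_{\varphi_2}$. Stone--Weierstrass, applied with $B_{M,\sigma}$ postliminal, now gives $C^*_{\mathcal A}(G(\mathbb A))_{M,\sigma}=B_{M,\sigma}$.

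\textbf{Main obstacle.} I expect the hard step to be Step 1, specifically whether \cite[Lemma~6.5]{CCH2016} transfers to this global setting. Two points need care. First, in the inner-form setting the normalized operators $U_w$, $w\in W(\sigma,\varphi)$, must act on the fibre at $\varphi$ as $G(\mathbb A)$-intertwiners; this follows from Proposition \ref{pGlobalWeylAction} together with the functional equation. Second, the twisted action of Proposition \ref{pGlobalWeylAction} must be strongly continuous in $\varphi$, which holds by the same argument as before. Once the stabilizer algebras $\mathfrak I(\sigma,\varphi)$ are trivial, the remaining steps are formal repetitions of the $\GL(n)$ case.
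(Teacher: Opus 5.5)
Your proposal is correct and follows the paper's route exactly. The paper's proof of this proposition just states that the proof of Proposition~\ref{psurjGLncomp} carries over, and you carry it over by substituting the inner-form inputs: Theorem~\ref{tInnerMWdisc} for multiplicity one, Lemma~\ref{lInnerGLnautoirrep} and Corollary~\ref{cInnertricommalg} for irreducibility and trivial stabilizer algebras, and Corollary~\ref{cuniquesigmaparaInnerGLn} for separation, before concluding with the fixed-point description of $B_{M,\sigma}$ and the noncommutative Stone--Weierstrass theorem.
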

\begin{proof}
The proof follows similarly from the proof of Proposition \ref{psurjGLncomp}. 
\end{proof}

As in Section \ref{ssGLncomponent}, we write $A_M$ instead of $A$ to distinguish the Levi subgroups $M$. 
By the multiplicity-one theorem for $\GL(r,D)$ (see Theorem \ref{tInnerMWdisc}), the $\sigma$-isotypic space $H_{M,\sigma}$ also reduces to a single copy of $H_\sigma$.

\begin{corollary}\label{cisoInnerGLn}
Let $G=\GL(r,D)$.  Then
\[
C^*_{\mathcal A}(G(\mathbb A))
\cong
\bigoplus_{[M,\sigma]}
\mathcal K
\left(
\Ind_P^G
\bigl(
C_0(\widehat{A_M},H_\sigma)
\bigr)
\right)^{W(G,M,\sigma)}.
\]
\end{corollary}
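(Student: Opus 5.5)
The plan is to assemble Corollary \ref{cisoInnerGLn} exactly as Corollary \ref{cisoGLn} was assembled for $\GL(n)$, using the inner-form results just established. We first apply Corollary \ref{csurjInnerGLn}. It identifies $C^*_{\mathcal A}(G(\mathbb A))$ with the $C^*$-direct sum $\bigoplus_{[M,\sigma]}C^*_{\mathcal A}(G(\mathbb A))_{M,\sigma}$ via $\Phi=\bigoplus\Phi_{M,\sigma}$. Injectivity is Proposition \ref{pinjPhi}. Surjectivity rests on two inputs:
\begin{itemize}
\item the pairwise disjointness of the supports of the $\Phi_{M,\sigma}$, which comes from Lemma \ref{lCAGfactor} and Proposition \ref{pInnerGLndiscdisj};
\item the vanishing at infinity from Proposition \ref{pweldefhom}.
\end{itemize}
These two facts feed into \cite[Lemma 5.14]{CCH2016}.

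Next we identify each summand using Proposition \ref{psurjInnerGLncomp}, which gives
$C^*_{\mathcal A}(G(\mathbb A))_{M,\sigma}=\mathcal K_{C_0(\widehat{A_M})}\bigl(\Ind_P^G\mathcal H_{M,\sigma}\bigr)^{W(G,M,\sigma)}$. That proposition follows the argument of Proposition \ref{psurjGLncomp}, with the inner-form substitutes:
\begin{itemize}
\item Lemma \ref{lInnerGLnautoirrep} gives irreducibility of each fiber $\Ind_P^G(\sigma\otimes\varphi)$;
\item Corollary \ref{cInnertricommalg} shows that the isotropy algebras $\mathfrak I(\sigma,\varphi)$ are trivial, so that \cite[Lemma 6.5]{CCH2016} applies to the fixed-point algebra;
\item Corollary \ref{cuniquesigmaparaInnerGLn} separates the $W(G,M,\sigma)$-orbits.
\end{itemize}
With these, the noncommutative Stone--Weierstrass theorem \cite[Theorem 11.1.8]{DiCalg} yields equality.

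Finally, we simplify the modules. By the multiplicity-one statement in Theorem \ref{tInnerMWdisc}, applied factorwise to $M\cong\prod_j\GL(s_j,D)$, we have $m(\sigma)=1$ for every $\sigma\in\Pi_{\disc}(M(\mathbb A)^1)$. Hence $H_{M,\sigma}=H_\sigma$, and so $\mathcal H_{M,\sigma}=C_0(\widehat{A_M},H_\sigma)$. Substituting this into the direct sum gives the stated isomorphism.

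The main obstacle is the justification behind Proposition \ref{psurjInnerGLncomp}. Its proof is only indicated ``similarly'', so one should check that the hypotheses of \cite[Lemma 6.5]{CCH2016} really hold in the global setting. In particular, each irreducible representation of $B_{M,\sigma}$ must be an evaluation $e_\varphi$. This depends on the strong continuity of $\lambda\mapsto M(w,\lambda)$ established in Proposition \ref{pGlobalWeylAction}, together with the triviality of the isotropy algebras $\mathfrak I(\sigma,\varphi)$. The multiplicity-one reduction is routine once Theorem \ref{tInnerMWdisc} is applied to each factor of the Levi.
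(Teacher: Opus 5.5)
Your proposal is correct and follows the paper's proof. The paper likewise obtains the corollary directly from Corollary \ref{csurjInnerGLn} and Proposition \ref{psurjInnerGLncomp}, and uses the multiplicity-one part of Theorem \ref{tInnerMWdisc} to replace $H_{M,\sigma}$ by $H_\sigma$. One small point on ordering: multiplicity one is already needed inside Proposition \ref{psurjInnerGLncomp}, to identify each fiber with the single irreducible $\Ind_P^G(\sigma\otimes\varphi)$, so it belongs before your second step rather than only in your final simplification.
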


\begin{proof}
This follows immediately from
Corollary~\ref{csurjInnerGLn} and
Proposition~\ref{psurjInnerGLncomp}.
\end{proof}

\bibliographystyle{abbrv}
\typeout{}
\bibliography{MyLibrary} 

\end{document}